\newtheorem{fed}{\textbf{Definition}}[section]
\newtheorem{thm}[fed]{\textbf{Theorem}}
\newtheorem{lemma}[fed]{\textbf{Lemma}}
\newtheorem{ex}[fed]{\textbf{Example}}
\newtheorem{rem}[fed]{\textbf{Remark}}
\newtheorem{prop}[fed]{\textbf{Proposition}}
\newtheorem{cor}[fed]{\textbf{Corollary}}
\newcommand{\A}{\mathcal{A}}
\newcommand{\N}{\mathbb{N}}
\newcommand{\Q}{\mathbb{Q}}
\newcommand{\Z}{\mathbb{Z}}
\newcommand{\R}{\mathbb{R}}
\newcommand{\C}{\mathbb{C}}
\newcommand{\LL}{\mathscr{L}}
\newcommand{\JJ}{\mathcal{J}}
\newcommand{\FF}{\mathcal{F}}
\newcommand{\p}{\partial}
\newcommand{\wh}{\widehat}
\newcommand{\fk}{\frak{k}}
\newcommand{\eps}{\varepsilon}
\newcommand{\CZ}{{\rm CZ}}
\newcommand{\Id}{{{\mathchoice {\rm 1\mskip-4mu l} {\rm 1\mskip-4mu l}
{\rm 1\mskip-4.5mu l} {\rm 1\mskip-5mu l}}}}
\newcommand*{\mathcolor}{}
\def\mathcolor#1#{\mathcoloraux{#1}}
\newcommand*{\mathcoloraux}[3]{%
  \protect\leavevmode
  \begingroup
    \color#1{#2}#3%
  \endgroup
}
\begin{document}
\title{Symplectic Tate homology}
\author{Peter Albers, Kai Cieliebak, Urs Frauenfelder}
\address{Peter Albers\\
 Mathematisches Institut\\
 Westf\"alische Wilhelms-Universit\"at M\"unster}
\email{peter.albers@wwu.de}

\address{Kai Cieliebak\\
 Mathematisches Institut\\
Universit\"at Augsburg}
\email{kai.cieliebak@math.uni-augsburg.de}

\address{Urs Frauenfelder\\
Department of Mathematics and Research Institute of Mathematics\\
Seoul National University}
\email{frauenf@snu.ac.kr}

\maketitle
\parindent=0pt
\parskip=4pt

\begin{abstract}
For a Liouville domain $W$ satisfying $c_1(W)=0$, we propose in this
note two versions of symplectic Tate homology
$\underrightarrow{H}\underleftarrow{T}(W)$ and
$\underleftarrow{H}\underrightarrow{T}(W)$  
which are related by a canonical map $\kappa \colon \underrightarrow{H}\underleftarrow{T}(W)
\to \underleftarrow{H}\underrightarrow{T}(W)$. Our geometric approach to Tate homology
uses the moduli space of finite energy gradient flow lines of the
Rabinowitz action functional for a circle in the complex plane as a
classifying space for $S^1$-equivariant Tate homology. For rational
coefficients the symplectic Tate homology
$\underrightarrow{H}\underleftarrow{T}(W;\Q)$ has the fixed point
property and is therefore isomorphic to 
$H(W;\mathbb{Q}) \otimes_\Q \mathbb{Q}[u,u^{-1}]$, where
$\mathbb{Q}[u,u^{-1}]$ is the ring of Laurent polynomials over the
rationals. Using a deep 
theorem of Goodwillie, we construct examples of Liouville domains where the canonical map $\kappa$
is not surjective and examples where it is not injective. 
\end{abstract}

\section{Introduction}

Equivariant Tate homology was introduced by Swan \cite{swan} in the
topological context of a finite group $G$ acting on a space $X$, see
the book by Brown \cite{brown}. Later this was generalized to actions
of compact Lie groups by Adem, Cohen and Dwyer \cite{adem_cohen_dwyer}
and Greenlees and May \cite{greenlees-may}, see also Tene \cite{tene}
for another construction using stratifolds. If $G=S^1$ and $X$ is a
finite CW complex, then $S^1$-equivariant Tate homology with rational
coefficients has the fixed point property
$$
\widehat{H}_*^{S^1}(X;\Q)\cong\widehat{H}_*^{S^1}(X^{S^1};\Q)\;,
$$
where $X^{S^1}$ is the fixed point set. For infinite dimensional
spaces this property fails in general, as was observed by Goodwillie
in \cite{goodwillie}. Jones and Petrack \cite{jones-petrack}
introduced a variant of $S^1$-equivariant Tate homology  which retains
the fixed point property, see also Cencelj \cite{cencelj}. 

In this paper we construct analogues of both versions of Tate homology
for symplectic homology, together with a canonical map between the two
versions. Our geometric approach twists symplectic homology 
with Rabinowitz Floer homology on the complex plane. There is
partial overlap between our work and the work of J.\,Zhao \cite{zhao},
who found independently a more algebraic approach to construct the two
symplectic Tate homologies. 

Let us briefly recall the main definition of symplectic homology;
see~\cite{CFH95,FloHof94,Sei08b,Vit99} for details. We restrict to the
case of a Liouville domain, i.e., a compact manifold $W$ with boundary
equipped with a $1$-form $\lambda$ such that $d\lambda$ is symplectic
and $\lambda|_{\p W}$ is a positive contact form. Its completion is
the Liouville manifold $V=W\cup_{\p W}\bigl([1,\infty)\times\p
W\bigr)$ with the $1$-form $\lambda_V$ that equals $\lambda$ on $W$
and $r\lambda$ on $[1,\infty)\times\p W$. The symplectic homology 
$SH_*(W)$ of the Liouville domain $W$ (we will usually omit
$\lambda$ from the notation) is by definition the Floer homology with
$\Z$-coefficients of any Hamiltonian on $V$
that grows quadratically in $r$ at infinity. It is $\Z$-graded 
by Conley-Zehnder indices if the first Chern class $c_1(W)$ (with
respect to any compatible almost complex structure) vanishes, and it
is invariant under Liouville isomorphisms of the completions (i.e.,
diffeomorphisms $f:V\to V'$ such that $f^*\lambda'-\lambda$ is exact
and compactly supported). An $S^1$-equivariant version
$SH^{S^1}_*(W)$ of symplectic homology with respect to the circle
action on the loop space of $V$ has been defined
in~\cite{viterbo,bourgeois-oancea1}. It is a graded module over the
polynomial ring $\Z[u^{-1}]$, where $u$ is a formal variable of degree
$2$. Moreover, it has a localization with respect to the endomorphism
$u^{-1}$, see Section~\ref{loc}. Now we can state our main result. 

\begin{thm}\label{thm:main}
(a) To every Liouville domain $W$ with $c_1(W)=0$ one
can naturally associate two versions of {\em symplectic Tate
  homology}: the {\em Jones-Petrack version}
$\underrightarrow{H}\underleftarrow{T}(W)$ and the {\em
  Goodwillie version} 
$\underleftarrow{H}\underrightarrow{T}(W)$. They are
graded modules over the graded ring $\Z[u,u^{-1}]$ of
Laurent polynomials with coefficients in $\Z$ in a variable
$u$ of degree $2$, and they are invariant under Liouville isomorphisms
of the completions. 

(b) There exists a natural map of $\Z[u,u^{-1}]$-modules
$$
   \kappa \colon \underrightarrow{H}\underleftarrow{T}(W) \to
   \underleftarrow{H}\underrightarrow{T}(W).
$$
(c) With rational coefficients, the Jones-Petrack version 
   $\underrightarrow{H}\underleftarrow{T}$ has the fixed point property
$$
   \underrightarrow{H}\underleftarrow{T}_*(W;\mathbb{Q})
   \cong H_{*+n}(W,\partial W;\mathbb{Q}) \otimes_\Q \mathbb{Q}[u,u^{-1}]. 
$$
(d) With coefficients in a field $\fk$, the Goodwillie version 
$\underleftarrow{H}\underrightarrow{T}(W;\fk)$ agrees with
the localization of the $S^1$-equivariant symplectic homology
$SH^{S^1}(W;\fk)$.  
\end{thm}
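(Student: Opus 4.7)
The plan is to treat the four parts in turn, with most of the work concentrated in (a). In the topological setting, one builds Tate homology from a suitable bicomplex, and the Jones--Petrack and Goodwillie versions arise by taking limit and colimit in the two possible orders. Our geometric substitute for the two-sided equivariant resolution is the Rabinowitz Floer complex associated to a circle in $\C$: this complex carries a natural filtration by the value of the Rabinowitz action functional, and the direct/inverse limits of the associated sub- and quotient complexes furnish the two algebraic operations needed to form Tate homology. The rotation action on $\C$ plays the role of the $S^1$-action and will ultimately provide the variable $u$.

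Concretely, for (a), I would set up moduli spaces of finite-energy gradient flow lines of a coupled functional on $\LL V\times\LL\C$ pairing a quadratic Hamiltonian on $V$ with the Rabinowitz action functional for the circle in $\C$, and verify the usual compactness and transversality package to obtain a bicomplex. The two iterated limits applied to this bicomplex will define $\underrightarrow{H}\underleftarrow{T}(W)$ and $\underleftarrow{H}\underrightarrow{T}(W)$ respectively. The $\Z[u,u^{-1}]$-module structure arises from the rotation action on the $\C$-factor, realized as a cap product with a generator $u$ of degree $2$, and Liouville invariance follows from the standard continuation argument for symplectic homology extended to the coupled setup.

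For (b), the map $\kappa$ is the canonical comparison map from the limit-of-colimits into the colimit-of-limits of the bicomplex, induced by the identity on chains. For (c), I would invoke the fixed-point property of Jones--Petrack Tate homology for our Floer model: with rational coefficients the $S^1$-equivariant information collapses to the fixed locus of loop rotation on $\LL V$, namely constant loops, whose Floer homology is identified with $H_{*+n}(W,\p W;\Q)$ via Poincar\'e--Lefschetz duality; tensoring with $\Q[u,u^{-1}]$ then yields the stated isomorphism. For (d), by construction the Goodwillie version is obtained from $S^1$-equivariant symplectic homology by inverting $u^{-1}$, which is precisely the localization discussed in Section~\ref{loc}; over a field $\fk$ this identification is immediate from the algebraic definition.

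The main obstacle will be the bicomplex construction in (a): one must show that the Rabinowitz Floer complex of the circle in $\C$ genuinely realizes a Floer-theoretic classifying space for $S^1$, so that the resulting bicomplex satisfies the algebraic axioms underlying equivariant Tate homology. In particular, the filtration by Rabinowitz action must interact correctly with the loop rotation on $\LL V$ under the coupled flow. Once this geometric input is in place, parts (b), (c) and (d) follow from formal manipulations with limits together with the fixed-point analysis for the $S^1$-action on $\LL V$.
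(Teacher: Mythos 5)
Your geometric setup (coupling a quadratic Hamiltonian on $V$ with the Rabinowitz functional $\A^\mu$ on $\C$, forming a doubly filtered complex, and taking the two iterated limits) is the approach of the paper, and your descriptions of $\kappa$ and of the $u$-action are correct. However, there is a structural gap in how you describe the double filtration. You say the complex ``carries a natural filtration by the value of the Rabinowitz action functional'' and that the two limits come from ``the associated sub- and quotient complexes'' of that one filtration. In the actual construction the two limits are governed by two \emph{different} functionals: the inverse limit ($a\to-\infty$) is over quotients by $\{\A^\mu<a\}$, while the direct limit ($b\to\infty$) is over the subcomplexes $\{\A_H\le b\}$. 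This is not a cosmetic point: if both limits were taken with respect to the same filtration, the comparison map $Hk$ would be an isomorphism (this is the case treated in \cite{CF-Morse}) and the Jones--Petrack and Goodwillie versions could not differ, contradicting Example~\ref{ex:kappa}. The entire content of the theorem lives in the failure of the square~\eqref{pii} to be exact for these two distinct filtrations. You also need to record that each $CT_a^b$ is finitely generated in each degree and that the projections are surjective, since these are the hypotheses under which Proposition~\ref{ta} produces the diagram and makes $\rho$ an isomorphism over a field.

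Parts (c) and (d) cannot be dispatched as quickly as you suggest. For (c), the fixed point property is not something to ``invoke'': one must run the spectral sequence of the $\A_H$-filtration, and its convergence uses precisely that this filtration is bounded below and exhaustive on $\underrightarrow{\lim}\underleftarrow{\lim}CT$ (it is \emph{not} exhaustive on $\underleftarrow{\lim}\underrightarrow{\lim}CT$, which is why the Goodwillie version fails the fixed point property). One must then choose $H$ vanishing on $W$ so the critical set splits into constant loops and Reeb circles, and prove that each nonconstant orbit contributes only torsion to the first page (Lemma~\ref{lem:circle2}); the relative group $H_{*+n}(W,\p W)$ arises from the outward-pointing auxiliary Morse function on the critical manifold $W$, not from a separate duality argument. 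For (d), the identification with the localization of $SH^{S^1}$ is \emph{not} immediate: localization does not in general commute with the inverse limit over $a$ (the paper gives an explicit counterexample after Corollary~\ref{locmain}), so one must exhibit the Tate triple $(CT_0^\infty,\,p_{-1}\circ u^{-1},\,\p)$, verify that the map $T=p_{-1}\circ u^{-1}$ is \emph{surjective}, and separately identify $HT_a(W)$ with $SH^{S^1}(W)$ by comparing the finite-dimensional truncations of the Rabinowitz model with the Bourgeois--Oancea construction.
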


{\em Remark. }
The hypothesis $c_1(W)=0$ provides us with well-defined integer
gradings and allows us to work over the ring
$\Z[u,u^{-1}]$. We expect that Theorem~\ref{thm:main} continues to hold
without this hypothesis if we drop the integer grading and replace
$\Z[u,u^{-1}]$ by a suitable Novikov completion. 
\medskip

Let us discuss some examples of symplectic Tate homology groups. 

(1) As an immediate consequence of Theorem~\ref{thm:main} (d),
for a Liouville domain with vanishing equivariant symplectic homology
we have
$$ 
   \underleftarrow{H}\underrightarrow{T}(W;\mathfrak{k}) = 0.
$$ 
For example, this applies to $\C^n$, or more generally to subcritical 
Stein domains~\cite{Cie02a,bourgeois-oancea}. 
More generally, suppose
that a Liouville domain $\widetilde W$ is obtained from a Liouville domain
$W$ by attaching subcritical handles. Then the restriction map induces
an isomorphism $SH^{S^1}(\widetilde W;\fk)\cong SH^{S^1}(W;\fk)$ (this
follows from~\cite{Cie02a}, see also~\cite{bourgeois-oancea2,cieliebak-oancea}). Hence
the Goodwillie versions of symplectic Tate homology satisfy
$$
   \underleftarrow{H}\underrightarrow{T}(\widetilde W;\fk)\cong
   \underleftarrow{H}\underrightarrow{T}(W;\fk). 
$$
Theorem~\ref{thm:main} (c) shows that this fails for the Jones-Petrack
version because the singular homology groups $H_{*+n}(\widetilde W,\partial
\widetilde W;\mathbb{Q})$ and $H_{*+n}(W,\partial W;\mathbb{Q})$ are in general
different. 

(2) The fixed point property for the Jones-Petrack version
$\underrightarrow{H}\underleftarrow{T}$ may fail if the coefficients
are not $\Q$. For example, with integer coefficients we have (see
Proposition~\ref{prop:C}) 
$$
   \underrightarrow{H}\underleftarrow{T}(\C^n) \cong 
   \Q\otimes_\Z\Z[u,u^{-1}].
$$

(3) The Goodwillie version 
$\underleftarrow{H}\underrightarrow{T}(W;\mathbb{Q})$ does in general
not have the fixed point property. This observation is due to
Goodwillie \cite{goodwillie}. For a closed Riemannian manifold $(N,g)$,
denote by $D^*N$ the unit disk cotangent bundle of $N$ with its canonical
structure as a Liouville domain. Goodwillie's theorem implies 

\begin{thm}\label{goodwil}
Assume that $N$ is a closed, simply connected, spin manifold. Then
$\underleftarrow{H}\underrightarrow{T}(D^*N;\mathbb{Q})=\mathbb{Q}[u,u^{-1}].$
\end{thm}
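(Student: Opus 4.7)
The plan is to combine Theorem~\ref{thm:main}(d) with the Viterbo isomorphism for cotangent bundles and Goodwillie's theorem on periodic cyclic homology.

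First, Theorem~\ref{thm:main}(d) reduces the problem to computing the $u^{-1}$-localization of $SH^{S^1}(D^*N;\mathbb{Q})$. Next, I would invoke Viterbo's isomorphism in its $S^1$-equivariant form, as worked out rigorously by Abbondandolo--Schwarz and Salamon--Weber in the non-equivariant case and extended to the equivariant setting by Viterbo and by Bourgeois--Oancea. This yields an identification
$$SH^{S^1}_*(D^*N;\mathbb{Q}) \;\cong\; H^{S^1}_*(\LL N;\mathbb{Q}),$$
where $\LL N$ is the free loop space of $N$ with its $S^1$-action by loop rotation. The hypothesis that $N$ is spin allows one to work without a twisting local system (in the sense of Kragh and Abouzaid), and simple connectivity removes subtleties coming from components of $\LL N$. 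This identification is compatible with the $u$-actions on the two sides.

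Then I would apply Jones' theorem, which for simply connected $N$ identifies $H^{S^1}_*(\LL N;\mathbb{Q})$ with the negative cyclic homology $HC^-_*(C_*\Omega N;\mathbb{Q})$ of the Pontryagin chain algebra of the based loop space. Under this identification the $u^{-1}$-operator corresponds to the Connes periodicity operator, so inverting it yields the periodic cyclic homology $HP_*(C_*\Omega N;\mathbb{Q})$. Finally, I would invoke Goodwillie's theorem from \cite{goodwillie}: whenever $A$ is a connective differential graded $\mathbb{Q}$-algebra with $H_0(A)=\mathbb{Q}$, the augmentation $A\to\mathbb{Q}$ induces an isomorphism $HP_*(A) \cong HP_*(\mathbb{Q}) = \mathbb{Q}[u,u^{-1}]$. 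Since $N$ is simply connected, $\Omega N$ is connected, and $C_*\Omega N$ satisfies this hypothesis, so the localization collapses to $\mathbb{Q}[u,u^{-1}]$, which is the desired conclusion.

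The main obstacle I expect is the bookkeeping of $S^1$-equivariant structures along the chain of identifications: the localization map defined in Section~\ref{loc}, the Bourgeois--Oancea $u^{-1}$-action on $SH^{S^1}$, the standard Borel action on $H^{S^1}_*(\LL N)$, and the Connes operator on cyclic homology must all be matched up. This compatibility is already implicit in the $S^1$-equivariant Viterbo and Jones isomorphisms, but writing out naturality of the periodicity operators requires care. Once these matches are in place, Goodwillie's theorem furnishes the collapse essentially as a black box.
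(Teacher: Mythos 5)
Your proposal follows essentially the same route as the paper: reduce via Theorem~\ref{thm:main}(d) to the localization of $SH^{S^1}(D^*N;\Q)$, identify this with the localized equivariant homology of the free loop space via the ($S^1$-equivariant) Viterbo isomorphism with the spin hypothesis handling orientations, and then invoke Goodwillie's theorem to collapse the localization to that of a point. The only difference is that you unpack Goodwillie's theorem through its cyclic-homology formulation (where, for the homological $S^1$-equivariant theory, the relevant group is $HC_*(C_*\Omega N)$ rather than $HC^-_*$, though both localize to $HP_*$), whereas the paper cites it as a black box stating that the localized rational equivariant loop space homology depends only on $\pi_1(N)$.
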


\textbf{Proof: }
The equivariant symplectic homology of $D^*N$ equals the equivariant
homology $H^{S^1}(LN)$ of the free loop space of $N$, see
\cite{abbondandolo-schwarz, salamon-weber, viterbo}. Here the spin
assumption is needed to make coherent orientations work out correctly
\cite{abouzaid, kragh}. By Theorem~\ref{thm:main} (d), we conclude that
$\underleftarrow{H}\underrightarrow{T}(D^*N)$ equals the localization
of $H^{S^1}(LN)$. By Goodwillie's theorem
\cite{goodwillie}, the localization of $H^{S^1}(LN)$ with rational
coefficients only depends on the fundamental group. Since $N$ is simply
connected, we conclude that
$\underleftarrow{H}\underrightarrow{T}(D^*N;\mathbb{Q})$ equals the
localized equivariant loop space homology of a point and the result
follows. \hfill $\square$ 

{\em Remark. }
For symplectic homology with twisted coefficients as
in~\cite{abouzaid}, Theorem~\ref{goodwil} continues to hold without
the spin hypothesis on $N$.

(4) The map $\kappa$ in Theorem~\ref{thm:main} (b) need in general
neither be injective nor surjective, see Example~\ref{ex:kappa}. 
If $\kappa$ has nontrivial kernel, then there exist infinite chains of
periodic orbits connected by Floer cylinders with action going to
infinity; see Section~\ref{ex} for examples of this phenomenon. 
\medskip

The main objective of the current paper is to lay the foundations for
symplectic Tate homology, establish its algebraic properties, and
compute it for some examples. 
However, our interest in this construction is motivated by embedding
questions and dynamical applications. For example, 
Viterbo observed in \cite[page\,1020]{viterbo} that Goodwillie's
theorem implies that the unit disk cotangent bundle $D^*N$ of a simply
connected closed manifold satisfies the ``strong equivariant algebraic
Weinstein conjecture'', namely the fundamental class of $N$ becomes
zero in localized equivariant symplectic homology of $D^*N$ with
rational coefficients. As a consequence, every closed contact type
hypersurface in $D^*N$ carries a closed characteristic. Another
consequence should be finiteness of the Hofer-Zehnder capacity of
$D^*N$, see~\cite{irie} for such an argument in a slightly different
context. Further potential applications concern the construction of
finite energy planes and establishing uniruledness of Liouville
domains in the sense of McLean~\cite{mclean}. 

\paragraph{\bf Acknowledgements.}
Most of this research was carried out during the third author's stay
at M\"unster University on a Humboldt Fellowship, and the first
author's visit to Seoul National University. The first author is
supported by the SFB 878  and the second author by DFG grant CI
45/5-1.

\section{Algebraic preliminaries}\label{sec:alg}

In this section we collect some algebraic facts, most of which can be
found in~\cite{CF-Morse}. For the applications we
have in mind we have to work in the category of graded $\Z$-modules.
For simplicity we will skip the reference to the grading. 

\subsection{Direct and inverse limits}\label{ss:direct}

A {\em direct system} is a tuple $(G,\pi)$
where $G=\{G_a\}_{a \in \R}$ is a family of $\Z$-modules
indexed by the real numbers, and
$\pi=\{\pi_{a_2,a_1}\}_{a_1 \leq a_2}$
is a family of  homomorphisms
$$\pi_{a_2,a_1} \colon G_{a_1} \to G_{a_2}$$
satisfying
$$\pi_{a,a}=\mathrm{id}|_{G_a}, \quad
\pi_{a_3,a_1}=\pi_{a_3,a_2} \circ \pi_{a_2,a_1},\,\,a_1 \leq a_2
\leq a_3.$$
We denote its \emph{inverse limit} as $a\to-\infty$ by
$$\underleftarrow{\lim}\,G = \lim_{\substack{\longleftarrow\\ a}}G_{a}.$$
For $a \in \R$ we have canonical maps
$$\pi_a \colon \underleftarrow{\lim}\,G \to G_a$$
satisfying
$$\pi_{a_2}=\pi_{a_2,a_1} \circ \pi_{a_1},\quad a_1 \leq a_2.$$

In order to make the notation easier
adaptable to our later purposes we denote the direct system by 
$(G,\iota)$ with homomorphisms
$$\iota^{b_2,b_1} \colon G^{b_1} \to G^{b_2}$$
when we talk about direct limits.
We denote its \emph{direct limit} as $b\to\infty$ by
$$\underrightarrow{\lim}\,G=\lim_{\substack{\longrightarrow\\ b}}G^b.$$
For $b \in \R$ we have canonical maps
$$\iota^b \colon G^b \to \underrightarrow{\lim}\,G$$
satisfying
$$\iota^{b_1}=\iota^{b_2} \circ \iota^{b_2,b_1}, \quad b_1 \leq b_2.$$

Direct and inverse limits do not necessarily commute. To describe this, we consider
a double indexed family of $\Z$-modules $G_a^b$ with
$a,b \in \R$. We suppose that
for every $b \in B$
and every $a_1 \leq a_2 \in A$ there exists a homomorphism
$$\pi^b_{a_2, a_1} \colon G_{a_1}^b \to G_{a_2}^b$$
and for every $a \in A$ and $b_1 \leq b_2 \in B$ there exists a
homomorphism
$$\iota_a^{b_2,b_1} \colon G_a^{b_1} \to G_a^{b_2}$$
such that the following
holds. For any fixed $b \in B$ and any fixed $a \in A$ the tuples
$\big(G^b,\pi^b\big)$ and
$\big(G_a,\iota_a\big)$
are direct systems. Moreover, for every
$a_1 \leq a_2$ and $b_1 \leq b_2$ the square
\begin{equation}\label{pii}
\begin{xy}
 \xymatrix{
  G_{a_1}^{b_1} \ar[r]^{\pi^{b_1}_{a_2,a_1}}
  \ar[d]_{\iota^{b_1,b_2}_{a_1}} & G^{b_1}_{a_2}
  \ar[d]^{\iota_{a_2}^{b_2,b_1}}\\
  G^{b_2}_{a_1} \ar[r]_{\pi_{a_2,a_1}^{b_2}} & G^{b_2}_{a_2}
 }
\end{xy}
\end{equation}
is commutative.
We refer to the triple $\big(G,\pi,\iota\big)$ as a
\emph{bidirect system}.
We denote the direct (resp.~inverse) limits for fixed $a$ (resp.~$b$) by
$$\lim_{\longrightarrow}G_{a}=\lim_{\substack{\longrightarrow\\ b}}G_{a}^b, \qquad 
\lim_{\longleftarrow}G^b=\lim_{\substack{\longleftarrow\\ a}}G_{a}^b.$$
Due to the commutation relation between $\pi$ and $\iota$,
for $a_1 \leq a_2 \in A$ and  $b_1 \leq b_2 \in B$ we obtain induced maps
$$\pi_{a_2,a_1} \colon
\lim_{\longrightarrow}G_{a_1}
\to \lim_{\longrightarrow}G_{a_2},
\qquad
\iota^{b_2,b_1} \colon \lim_{\longleftarrow}G^{b_1} \to
\lim_{\longleftarrow}G^{b_2}$$
that make 
$(\underrightarrow{\lim}\,G,\pi)$
and $(\underleftarrow{\lim}\,G,\iota)$ 
direct systems.

\begin{prop}[see~\cite{CF-Morse}]\label{caho}
For a bidirect system $(G,\pi,\iota)$ there exists for every $b \in B$
a unique homomorphism
$$\kappa^b \colon \lim_{\longleftarrow}
G^b \to \lim_{\longleftarrow}
\lim_{\longrightarrow}G$$
and a unique homomorphism
$$\kappa \colon \lim_{\longrightarrow}
\lim_{\longleftarrow}G
\to \lim_{\longleftarrow}
\lim_{\longrightarrow}G$$
such that for each $a \in A$ and $b \in B$ the following
diagram commutes
\[
\begin{xy}
 \xymatrix{
G_a^b  \ar[d]^{\iota^b_a}& \ar[l]_{\pi^b_a}
\underleftarrow{\lim}G^b \ar[r]^{\iota^b}
\ar@{.>}[d]^{\exists!\,\kappa^b}& \underrightarrow{\lim}
\underleftarrow{\lim}G
\ar@{.>}[ld]^/-.8em/{\exists!\, \kappa}\\
\underrightarrow{\lim} G_a  &  \ar[l]_{\pi_a}
\underleftarrow{\lim}\underrightarrow{\lim}G
}
\end{xy}
\]
\end{prop}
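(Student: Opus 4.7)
The proof is pure diagram chasing via the universal properties of direct and inverse limits, done in two stages.

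First I would construct $\kappa^b$ for each fixed $b$. The natural candidate to feed into the universal property of $\underleftarrow{\lim}\underrightarrow{\lim}\,G$ is the family
\[
\varphi_a^b := \iota_a^b \circ \pi_a^b \colon \underleftarrow{\lim}\,G^b \to \underrightarrow{\lim}\,G_a, \qquad a \in \R.
\]
To invoke the universal property I need to verify that $\{\varphi_a^b\}_a$ is a cone over the inverse system $\{\underrightarrow{\lim}\,G_a\}_a$, i.e. $\pi_{a_2,a_1}\circ \varphi_{a_1}^b = \varphi_{a_2}^b$ for $a_1\le a_2$. By the universal property defining the induced map $\pi_{a_2,a_1}$ on direct limits, $\pi_{a_2,a_1}\circ \iota_{a_1}^b = \iota_{a_2}^b \circ \pi_{a_2,a_1}^b$; combining this with $\pi_{a_2,a_1}^b \circ \pi_{a_1}^b = \pi_{a_2}^b$ yields the required identity. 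The universal property of $\underleftarrow{\lim}\underrightarrow{\lim}\,G$ then produces a unique $\kappa^b$ with $\pi_a\circ\kappa^b = \iota_a^b\circ\pi_a^b$, which is exactly the commutativity of the left portion of the diagram.

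Next I would construct $\kappa$ by showing that the $\kappa^b$ assemble into a cocone over the direct system $\{\underleftarrow{\lim}\,G^b\}_b$, that is $\kappa^{b_2}\circ \iota^{b_2,b_1} = \kappa^{b_1}$ for $b_1\le b_2$. By the uniqueness clause in the universal property of $\underleftarrow{\lim}\underrightarrow{\lim}\,G$, it is enough to verify this after postcomposition with each $\pi_a$. Using the identity $\pi_a^{b_2}\circ \iota^{b_2,b_1} = \iota_a^{b_2,b_1}\circ \pi_a^{b_1}$ (again the universal property of the induced map on inverse limits) together with $\iota_a^{b_2}\circ \iota_a^{b_2,b_1}=\iota_a^{b_1}$, one computes
\[
\pi_a\circ\kappa^{b_2}\circ \iota^{b_2,b_1}
= \iota_a^{b_2}\circ \iota_a^{b_2,b_1}\circ \pi_a^{b_1}
= \iota_a^{b_1}\circ \pi_a^{b_1}
= \pi_a\circ\kappa^{b_1}.
\]
The universal property of $\underrightarrow{\lim}\underleftarrow{\lim}\,G$ then delivers the unique $\kappa$ with $\kappa\circ\iota^b = \kappa^b$, which together with the previous step gives the full commutative diagram.

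No serious obstacle is expected; the whole argument is a two-layer application of universal properties, and the compatibility square \eqref{pii} is used exactly twice (once to make $\pi_{a_2,a_1}$ well defined on direct limits and once to make $\iota^{b_2,b_1}$ well defined on inverse limits). The only mild pitfall is bookkeeping: at every step one must consistently distinguish the system-level maps $\pi_{a_2,a_1}^b$, $\iota_a^{b_2,b_1}$ from the induced maps $\pi_{a_2,a_1}$, $\iota^{b_2,b_1}$ on the respective limits.
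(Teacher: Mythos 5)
Your argument is correct and is exactly the standard two-layer universal-property construction: first the cone $\{\iota_a^b\circ\pi_a^b\}_a$ induces $\kappa^b$ into the inverse limit, then the compatibility $\kappa^{b_2}\circ\iota^{b_2,b_1}=\kappa^{b_1}$ (checked after postcomposition with each $\pi_a$) induces $\kappa$ out of the direct limit, with uniqueness in both steps coming from the uniqueness clauses of the respective universal properties. The paper itself delegates the proof to the cited reference \cite{CF-Morse}, and your write-up is precisely the argument one finds there, including the correct observation that the commuting square \eqref{pii} enters only through the two induced maps $\pi_{a_2,a_1}$ and $\iota^{b_2,b_1}$ on the limits.
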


In general, the map $\kappa$ is neither 
injective nor surjective; we will see examples of this in 
Section~\ref{ex}.
Conditions under which the canonical homomorphism $\kappa$
is an isomorphism were obtained by B.\,Eckmann and
P.\,Hilton in \cite{eckmann-hilton} and by
A.\,Frei and J.\,Macdonald in \cite{frei-macdonald}. 

\subsection[Bidirect systems of chain complexes]{Bidirect systems
of chain complexes}\label{ss:bidirect}

A {\em bidirect system of chain
complexes} is a quadruple
$(C,p,i,\partial)$,
where $(C,p,i)$ is a bidirect system which in addition is endowed for
each $a \in A$ and $b \in B$ with a boundary operator
$$\partial^b_a \colon C^b_a \to C^b_a$$
which commutes with $i$ and $p$ in the sense that 
\begin{equation}\label{proj1}
p^b_{a_2,a_1} \circ \partial_{a_1}^b=
\partial_{a_2}^b \circ p^b_{a_2,a_1}
\end{equation}
and
\begin{equation}\label{in1}
i_a^{b_2,b_1} \circ \partial_a^{b_1}=\partial_a^{b_2}
\circ i_a^{b_2,b_1}.
\end{equation}
If
$$HC^b_a=\frac{\mathrm{ker}\partial^b_a}{\mathrm{im}\partial^b_a}$$
are the homology groups, and
$Hp^{b}_{a_2,a_1}$ and $Hi^{b_2,b_1}_a$ are the induced maps
on homology, then the triple $(HC,Hp,Hi)$ is a bidirect system. As
in the previous subsection we
let
$$\kappa \colon \underrightarrow{\lim}\underleftarrow{\lim}HC
\to \underleftarrow{\lim}\underrightarrow{\lim}HC$$
be the canonical homomorphism on homology level. We refer to
$$k \colon \underrightarrow{\lim}\underleftarrow{\lim}C
\to \underleftarrow{\lim}\underrightarrow{\lim}C$$
as the canonical homomorphism on chain level. Since
$\partial$ commutes with $i$ and $p$ we obtain an induced map
$$Hk \colon H\big(\underrightarrow{\lim}\underleftarrow{\lim}C\big)
\to H\big(\underleftarrow{\lim}\underrightarrow{\lim}C\big).$$

\begin{prop}\label{ta}
(a) Let $(C,p,i,\partial)$ be a bidirect system of chain complexes of
$\Z$-modules such that all the maps $p^b_{a_2,a_1}$ are
surjective. Then there is a canonical diagram
\begin{equation}\label{maindia1}
\begin{xy}
\xymatrix{
H(\underrightarrow{\lim}\underleftarrow{\lim}C)
\ar[d]^{Hk}
\ar[r]^{\rho}
& \underrightarrow{\lim}
\underleftarrow{\lim}HC
\ar[d]^{\kappa}\\
H(\underleftarrow{\lim}\underrightarrow{\lim}C)   \ar[r]^{\sigma} &
\underleftarrow{\lim}\underrightarrow{\lim}HC
}
\end{xy}
\end{equation}
with $\sigma$ surjective. 

(b) If instead of $\Z$-modules the $C_a^b$ are
graded vector spaces that are finite dimensional in each degree, then
the diagram commutes and $\rho$ is an isomorphism. 
\end{prop}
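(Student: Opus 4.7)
The plan is to assemble $\rho$ and $\sigma$ from the standard natural comparison maps between homology and limits, derive surjectivity of $\sigma$ in (a) from a Milnor-type short exact sequence, and deduce (b) by combining finite-dimensionality with naturality.

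For the constructions, fix $b$ and use the universal property of the inverse limit applied to the chain maps $p^b_{a_2,a_1}$ to produce the natural comparison $H(\underleftarrow{\lim}_a C^b_a)\to\underleftarrow{\lim}_a HC^b_a$. Taking $\underrightarrow{\lim}_b$ and using that direct limits of $\Z$-modules are exact (so they commute with homology) yields $\rho$. Similarly, $\sigma$ is the natural map $H(\underleftarrow{\lim}_a X_a)\to\underleftarrow{\lim}_a HX_a$ applied to the tower $X_a := \underrightarrow{\lim}_b C^b_a$, after identifying $HX_a = \underrightarrow{\lim}_b HC^b_a$ by the same exactness.

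For (a), I would obtain surjectivity of $\sigma$ from the Milnor short exact sequence
\[
0\to\underleftarrow{\lim}{}^1 H_{*+1}(X_\cdot)\to H_*(\underleftarrow{\lim}_a X_a)\to\underleftarrow{\lim}_a H_*(X_a)\to 0,
\]
whose right-hand arrow is precisely $\sigma$. The hypothesis that each $p^b_{a_2,a_1}$ is surjective is preserved by the exact functor $\underrightarrow{\lim}_b$, so the transition maps of the tower $\{X_a\}$ are themselves surjective; this gives Mittag-Leffler in each chain degree and produces the Milnor sequence (after passing to a cofinal $\N$-indexed subsystem of action levels in order to cite the standard statement).

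For (b), once each $C^b_a$ is finite dimensional in a given degree, Mittag-Leffler is automatic for every tower in sight, since a decreasing chain of subspaces of a finite-dimensional vector space must stabilize. Hence $\underleftarrow{\lim}{}^1 HC^b_\cdot=0$ and $H(\underleftarrow{\lim}_a C^b_\cdot)\cong\underleftarrow{\lim}_a HC^b_\cdot$ for each $b$; applying $\underrightarrow{\lim}_b$ preserves this isomorphism, so $\rho$ becomes an isomorphism. Commutativity of \eqref{maindia1} I would verify by naturality: for a cycle $(x_a)\in\underleftarrow{\lim}_a C^b_a$, both $\kappa\circ\rho$ and $\sigma\circ Hk$ send $[(x_a)]$ to the coherent family $([\iota^b_a x_a])_a\in\underleftarrow{\lim}_a\underrightarrow{\lim}_b HC$, with \eqref{pii}, \eqref{proj1} and \eqref{in1} ensuring that this family is a coherent system in both $a$ and $b$, and the uniqueness statement of Proposition~\ref{caho} then forcing the two candidate maps to agree. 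The main obstacle is the bookkeeping of this naturality, since the two paths around the square interleave direct and inverse limits in opposite orders; a careful diagram chase, or a functorial formulation of Proposition~\ref{caho}, should close the argument.
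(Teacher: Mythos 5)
Your proposal is correct and takes essentially the same route as the paper: the paper's proof simply defers to \cite{CF-Morse}, whose key inputs are exactly the ingredients you spell out --- exactness of filtered direct limits (so homology commutes with $\underrightarrow{\lim}_b$), the Milnor $\underleftarrow{\lim}{}^1$ sequence for an $\N$-indexed tower of chain complexes with surjective transition maps (this is the ``Theorem 3.7 \dots proved in \cite{weibel} for $\Z$-modules'' step, applied after passing to a cofinal sequence of action levels), and stabilization of decreasing subspaces of finite-dimensional vector spaces to get Mittag--Leffler and hence the vanishing of $\underleftarrow{\lim}{}^1 HC$ in part (b). As a minor bonus, your element-wise verification of commutativity (a class in $H(\underrightarrow{\lim}\underleftarrow{\lim}C)$ is represented by a cycle $(x_a)_a\in\underleftarrow{\lim}_a C_a^b$ at some finite stage $b$, and both composites send it to the family $([\iota_a^b x_a])_a$, which determines the element of the inverse limit) uses no finite-dimensionality, so it yields commutativity of \eqref{maindia1} already in the setting of part (a), slightly more than the paper asserts.
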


\textbf{Proof: }
This follows by combining several results in~\cite{CF-Morse}. However,
as everything there is stated in the category of vector spaces, we
need to check which parts carry over to $\Z$-modules. Let us indicate
the required modifications, using the notation from~\cite{CF-Morse}. 

Theorem 3.7 from~\cite{CF-Morse} is proved in~\cite{weibel} for
$\Z$-modules. Using this, it follows that our bidirect system
$(C,p,i,\partial)$ satisfies the ``tameness'' condition
in~\cite{CF-Morse}, except that the map $\nu^b$ is only surjective
rather than an isomorphism. Moreover, the argument from the end of
proof of Theorem A still works to show that $\nu$ is surjective. 
With this, the discussion after Definition 3.3 still yields the
desired diagram and surjectivity of the map $\sigma$. 
If instead of $\Z$-modules the $C_a^b$ are graded vector spaces that
are finite dimensional in each degree, then our bidirect system
$(C,p,i,\partial)$ is actually ``tame'', and Proposition 3.4
from~\cite{CF-Morse} implies that the diagram commutes and $\rho$ is
an isomorphism.   
\hfill $\square$

\begin{rem}
(a) In our applications the spaces $C_a^b$ appear as sublevel sets
  (for $b$) and quotients (for $a$) with respect to two real
  filtrations on a chain complex $(C,\partial)$. In~\cite{CF-Morse} we
  considered the special case that the two filtrations are equal and
  showed that then the map $Hk$ in~\eqref{maindia1} is an
  isomorphism. We will see below that this fails in the case of two
  different filtrations. This failure can be traced to the failure of
  the commutative square~\eqref{pii} to be exact in the sense
  of~\cite{frei-macdonald}. 

(b) The map $\rho$ in Proposition~\ref{ta} is in general not an
  isomorphism for finitely generated $\Z$-modules. A counterexample is
  given in Appendix A of~\cite{CF-Morse}. This failure can be traced
  to the failure of the Mittag-Leffler condition due to the existence
  of infinite chains of subrings such as $\Z\supset 2\Z\supset
  4\Z\supset\cdots$.  
\end{rem}

\begin{rem}\label{rem:short-exact}
Suppose we are given a short exact sequence $0\to A\to B\to C\to 0$ of
bidirect systems of chain complexes of graded vector spaces as in
Proposition~\ref{ta} (b). Since
direct limits preserve exactness, and inverse limits preserve
exactness for finite dimensional vector
spaces~\cite{eilenberg-steenrod}, we can apply either first the
inverse limit, then the direct limit and then homology, or first
homology, then the inverse limit and then the direct limit to obtain
long exact sequences fitting in the following commuting diagram with
the map $\rho$ from~\eqref{maindia1}:
\begin{equation}
\begin{xy}
\xymatrix{
\cdots \ar[r] &
H(\underrightarrow{\lim}\underleftarrow{\lim}A)
\ar[d]^{\rho_A}_\cong \ar[r] & 
H(\underrightarrow{\lim}\underleftarrow{\lim}B)
\ar[d]^{\rho_B}_\cong \ar[r] & 
H(\underrightarrow{\lim}\underleftarrow{\lim}C)
\ar[d]^{\rho_C}_\cong \ar[r] & \cdots
\\
\cdots \ar[r] &
\underrightarrow{\lim}\underleftarrow{\lim}\text{HA}
\ar[r] &
\underrightarrow{\lim}\underleftarrow{\lim}\text{HB}
\ar[r] &
\underrightarrow{\lim}\underleftarrow{\lim}\text{HC}
\ar[r] & \cdots
}
\end{xy}
\end{equation}
\end{rem}

\subsection{Module structure over Laurent polynomials}

In our applications we will have for all $a,b$ additional isomorphisms
$$u_a^b:C_a^b\overset{\cong}{\longrightarrow} C_{a+1}^b$$
commuting with all the other maps in the obvious sense. 
By functoriality of direct and inverse limits and homology, the
$u_a^b$ induce isomorphisms on the four groups in~\eqref{maindia1}
commuting with the maps in this diagram. Denoting these isomorphisms
simply by $u$, we thus have 

\begin{cor}\label{cor:module}
Additional isomorphisms $u_a^b$ as above make the four groups
in~\eqref{maindia1} modules over the ring $\fk[u,u^{-1}]$ of Laurent
polynomials in $u$ and the maps in this diagram module
homomorphisms. \hfill $\square$ 
\end{cor}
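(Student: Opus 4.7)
The plan is to unpack the remark immediately preceding the corollary and verify the details. Since $u_a^b$ commutes with the boundary operator $\partial_a^b$, it descends to an isomorphism on homology $HC_a^b \to HC_{a+1}^b$; commutation with $p$ and $i$ ensures that $u$ is compatible with both structure maps of the bidirect system, both at the chain and at the homology level. So the starting data is a family of isomorphisms of bidirect systems, and the goal is just to push it through the two limit constructions and show that the resulting endomorphisms are compatible with all the maps in diagram~\eqref{maindia1}.

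Next I would observe that shifting the index $a$ by $+1$ is a cofinal reindexing of the inverse system $\{C^b_a\}_a$ (for fixed $b$). Consequently the inverse limit $\underleftarrow{\lim}\,C^b$ is canonically isomorphic to its shifted version, and functoriality of $\underleftarrow{\lim}$ together with compatibility of the $u^b_a$ with the projections $p$ assembles them into a single automorphism, which I will also denote $u$, of $\underleftarrow{\lim}\,C^b$, natural in $b$. Passing then to the direct limit over $b$, and using compatibility with $i$, produces an automorphism $u$ of $\underrightarrow{\lim}\underleftarrow{\lim}\,C$. An entirely parallel argument, with the order of the two limits reversed, yields automorphisms $u$ of $\underleftarrow{\lim}\underrightarrow{\lim}\,C$ and, after repeating the whole discussion for $HC_a^b$, of the two homology-level corners $\underrightarrow{\lim}\underleftarrow{\lim}\,HC$ and $\underleftarrow{\lim}\underrightarrow{\lim}\,HC$.

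With the automorphisms in hand, the module structure over $\fk[u,u^{-1}]$ is then immediate: the ground-ring action together with the invertible endomorphism $u$ and its inverse $u^{-1}$ extends uniquely to an action of the Laurent polynomial ring on each of the four groups. The maps $Hk$, $\rho$, $\sigma$ and $\kappa$ in diagram~\eqref{maindia1} commute with $u$ because each is constructed purely functorially from the structure maps $p$, $i$ and $\partial$, all of which are assumed to commute with $u$; hence they are $\fk[u,u^{-1}]$-linear. I expect the main technical point to be the cofinality/reindexing step, which is routine but must be spelled out carefully to ensure that the induced action of $u$ on each of the four corners is canonical and does not depend on an arbitrary choice of shift.
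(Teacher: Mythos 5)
Your proposal is correct and follows essentially the same route as the paper, which disposes of the corollary in one sentence by appealing to functoriality of direct limits, inverse limits, and homology. Your explicit identification of the shift $a\mapsto a+1$ as a cofinal reindexing, and the remark that $\kappa$ commutes with $u$ by the uniqueness clause in its defining universal property, are exactly the details the paper leaves implicit.
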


\begin{rem}
(a) In our applications the $u_a^b$ will shift gradings by $2$. 

(b) For a field $\fk$, the ring $\fk[u,u^{-1}]$ is a principal ideal
  domain. To see this, let $I$ be an ideal in $\fk[u,u^{-1}]$. Then
  $I\cap  \fk[u]$ is an ideal in the principal ideal domain  $\fk[u]$
  and thus generated by one polynomial $p$. Now for any $q\in I$  we
  have $u^nq\in I\cap \fk[u]$ for some large integer $n$, so $u^nq=rp$
  for some $r\in\fk[u]$ and thus $q=u^{-n}rp$. 

(c) By~\cite[Section 3.9, Exercise 3]{jacobson}, modules over a
  principal ideal domain have a well-defined rank satisfying the rank
  formula for a submodule $N\subset M$: 
$${\rm rank}\,M = {\rm rank}\,N + {\rm rank}\,M/N.$$
\end{rem}

\subsection{Localization}\label{loc}
In this subsection we work in the category of vector spaces. 
Assume that $V$ is a vector space over a field $\frak{k}$ and $T \colon V \to V$
is a linear map. We define $V^T \subset \prod_{i\in\N}V$ to be the
subvector space consisting of $(v_i)_{i \in \mathbb{N}}$ satisfying $Tv_{i+1}=v_i$. 
If one thinks of the pair $(V,T)$ as a direct system with $V^k=V$ for $k \in \mathbb{N}$ and
$T^k=T \colon V^k \to V^{k-1}$ for $k \geq 2$, then we can think of $V^T$ as the inverse limit
\begin{equation}\label{eq:VT}
V^T=\lim_{\substack{\longleftarrow\\ k}}V^k.
\end{equation}
(Note that here the notation differs from Section~\ref{ss:direct} in
that $k$ tends to $+\infty$ rather than $-\infty$.)
The map $T$ induces a linear map
$\overline{T} \colon V^T \to V^T$ given by
$$\overline{T}(v_i)_{i \in \mathbb{N}}=(Tv_i)_{i \in \mathbb{N}}.$$
Although $T$ was not assumed to be invertible, the map $\overline{T}$ is invertible with inverse given by
$$\overline{T}^{-1} \colon V^T \to V^T, \quad (v_i)_{i \in \mathbb{N}} \mapsto
(v_{i+1})_{i \in \mathbb{N}}.$$
In particular, $V^T$ becomes a module over the ring of Laurent
polynomials $\mathfrak{k}[u,u^{-1}]$ where $u$ acts via $\overline{T}$
on $V^T$. We call the $\mathfrak{k}[u,u^{-1}]$-module $V^T$ the {\em
  localization} of $V$ with respect to $T$. 

There is a natural map
$$P=P_T \colon V^T \to V, \quad (v_i)_{i \in \mathbb{N}} \mapsto v_1.$$
Note that $P$ interchanges the maps $\overline{T}$ and $T$
$$TP=P\overline{T}.$$ 
\begin{lemma}\label{lem:P-iso}
If $T$ is an isomorphism, then $P$ is an isomorphism as well.
\end{lemma}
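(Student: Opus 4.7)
The plan is to construct an explicit two-sided inverse to $P$ by exploiting the invertibility of $T$. Specifically, I would define
\[
   Q \colon V \to V^T, \qquad v \mapsto \bigl(T^{-(i-1)}v\bigr)_{i \in \mathbb{N}},
\]
and then verify that $Q$ lands in $V^T$, is linear, and satisfies $PQ = \mathrm{id}_V$ and $QP = \mathrm{id}_{V^T}$.

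First I would check well-definedness: the sequence $(T^{-(i-1)}v)_i$ indeed satisfies the compatibility condition defining $V^T$, since $T(T^{-i}v) = T^{-(i-1)}v$. Linearity of $Q$ is immediate from linearity of the powers of $T^{-1}$. The identity $PQ = \mathrm{id}_V$ is read off directly from the definitions: $PQ(v) = Q(v)_1 = T^0 v = v$.

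For $QP = \mathrm{id}_{V^T}$, the key observation is that invertibility of $T$ forces the defining relation $Tv_{i+1} = v_i$ to be equivalent to $v_{i+1} = T^{-1}v_i$, so by induction every element $(v_i) \in V^T$ is uniquely determined by its first entry via $v_i = T^{-(i-1)}v_1$. Hence $QP((v_i)) = Q(v_1) = (T^{-(i-1)}v_1)_i = (v_i)$.

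There is no real obstacle here: the only point that requires the hypothesis on $T$ is the step that rewrites $Tv_{i+1} = v_i$ as $v_{i+1} = T^{-1}v_i$, which both produces the candidate inverse $Q$ and shows uniqueness of preimages under $P$. Without invertibility of $T$, there would in general be many compatible sequences with a prescribed first entry (or none at all), and $P$ could fail to be either injective or surjective.
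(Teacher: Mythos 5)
Your proof is correct and essentially the same as the paper's: the element $Q(v)=(T^{-(i-1)}v)_{i\in\mathbb{N}}$ you use as the inverse is exactly the preimage the paper exhibits for surjectivity, and your observation that $v_i=T^{-(i-1)}v_1$ is forced is the same induction the paper uses for injectivity. Packaging it as an explicit two-sided inverse rather than as separate injectivity and surjectivity checks is only a cosmetic difference.
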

\textbf{Proof: }We first show that $P$ is injective. To see this,
suppose that $(v_i)_{i \in \mathbb{N}}$ is in the kernel of $P$. This
means that $v_1=0$. By induction on the formula $v_i=Tv_{i+1}$ and
using the injectivity of $T$, we conclude that $v_i=0$ for every $i \in
\mathbb{N}$. This shows that $P$ is injective. To see that $P$ is
surjective, let $v \in V$. Since $T$ is an isomorphism, the 
element $(T^{-i+1}v)_{i \in \mathbb{N}}$ exists in $V^T$. But
$P(T^{-i+1}v)_{i \in \mathbb{N}}=v$. This proves surjectivity
of $P$, and hence the lemma. \hfill $\square$
\\ \\
In particular, Lemma~\ref{lem:P-iso} implies that if $T$ is an
isomorphism, then the pair $(V,T)$ is naturally identified with the
pair $(V^T,\overline{T})$ via the map $P$.  

\begin{lemma}\label{lem:P-inj}
Assume that $V$ is finite dimensional. Then $P$ is injective.
\end{lemma}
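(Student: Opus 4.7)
The plan is to exploit that in finite dimensions the operator $T$ admits a Fitting-type decomposition, and that every element of $V^T$ lies in $\bigcap_k \mathrm{im}\,T^k$.

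First I would unwind the definitions: if $(v_i)_{i\in\N}\in\ker P$, then $v_1=0$, and the compatibility $Tv_{i+1}=v_i$ gives by induction $T^{i-1}v_i=0$, i.e.\ $v_i\in\ker T^{i-1}$. On the other hand, iterating $v_i=Tv_{i+1}=T^2v_{i+2}=\cdots$ shows $v_i\in\mathrm{im}\,T^k$ for every $k\ge 0$. So the problem reduces to showing that the intersection $\ker T^{i-1}\cap\bigcap_{k\ge 0}\mathrm{im}\,T^k$ is trivial for every $i$.

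Next I would invoke finite-dimensionality. The ascending chain $\ker T\subseteq\ker T^2\subseteq\cdots$ and the descending chain $\mathrm{im}\,T\supseteq\mathrm{im}\,T^2\supseteq\cdots$ both stabilize, so there is an integer $N$ with $\ker T^N=\ker T^{N+j}$ and $\mathrm{im}\,T^N=\mathrm{im}\,T^{N+j}$ for all $j\ge 0$. The Fitting lemma then gives the direct sum decomposition
$$V=\ker T^N\oplus \mathrm{im}\,T^N,$$
so in particular $\ker T^N\cap\mathrm{im}\,T^N=0$.

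Finally I would combine the two observations. For any $i\ge 1$, from $v_i=T^N v_{i+N}$ we have $v_i\in\mathrm{im}\,T^N$; and from $T^{i-1}v_i=0$ together with $\ker T^{i-1}\subseteq\ker T^{\max(i-1,N)}=\ker T^N$ we have $v_i\in\ker T^N$. Hence $v_i\in\ker T^N\cap\mathrm{im}\,T^N=0$, so $v_i=0$ for all $i$ and $P$ is injective. The only nontrivial step is the Fitting decomposition, but this is standard linear algebra in finite dimensions, so I expect no real obstacle; the main point is recognizing that one needs to use \emph{both} the kernel condition (which follows from $v_1=0$) and the image condition (which follows automatically from the structure of $V^T$) simultaneously.
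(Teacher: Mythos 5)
Your proof is correct, and it takes a genuinely different route from the paper's. The paper argues by contradiction: if some $v_i\neq 0$, let $i_0>1$ be minimal with $v_{i_0}\neq 0$; then each $v_i$ with $i\geq i_0$ satisfies $T^{j}v_i\neq 0$ for $0\leq j\leq i-i_0$ but $T^{i-i_0+1}v_i=0$, so the $v_i$ have pairwise distinct exact nilpotency orders and are therefore linearly independent, contradicting $\dim V<\infty$. That argument uses only the kernel filtration $\ker T\subseteq\ker T^2\subseteq\cdots$ and is entirely elementary. You instead combine the kernel condition $v_i\in\ker T^{i-1}$ with the image condition $v_i\in\bigcap_k\operatorname{im}T^k$ (which holds for \emph{every} element of $V^T$, not just those in $\ker P$) and conclude via the Fitting decomposition $V=\ker T^N\oplus\operatorname{im}T^N$. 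Your route is slightly less self-contained but more structural: it isolates exactly what fails in infinite dimensions (the stabilization of the two chains, hence the Fitting splitting), and the image condition you identify is precisely the observation the paper exploits later when it proves $V^T_T=V^T$ and introduces the subspace $V_T=\bigcap_j T^jV$. Both proofs are valid; the paper's buys elementarity, yours buys a cleaner conceptual link to the subsequent localization discussion.
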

\textbf{Proof: }Assume that $v=(v_i)_{i \in \mathbb{N}} \in V^T$
lies in the kernel of $P$, i.e., $v_1=0$. We suppose by contradiction that $v \neq 0$. Hence there exists $i \in \mathbb{N}$ satisfying $v_i \neq 0$. Let $i_0>1$ be the minimal positive integer with this property. Hence for every  
$i \geq i_0$ the vector $v_i$ has the property that $T^jv_i \neq 0$
for $0 \leq j \leq i-i_0$ but $T^{i-i_0+1}v_i=0$. This implies that
the vectors $v_i$ for $i \geq i_0$ are linearly independent. But this
contradicts the assumption that $V$ is finite dimensional and the
lemma is proved. \hfill $\square$

\begin{ex}{\rm 
In infinite dimensions, Lemma~\ref{lem:P-inj} is far from
true. Consider for example a vector space $V$ of infinite countable
dimension with basis $\{e_i\}_{i \in \mathbb{N}}$. Let $T \colon V \to
V$ be the shift operator 
$$T e_i:=\left\{\begin{array}{cc}
e_{i-1} & i>1 \\
0 & i=1.
\end{array}\right.
$$
A basis for $V^T$ is given by $\overline{e}_j$, $j \in \mathbb{Z}$, defined by
$$(\overline{e}_j)_i:=\left\{\begin{array}{cc}
e_{i+j} & i+j \geq 1\\
0 & i+j \leq 0.
\end{array}\right.$$
The induced linear operator becomes the shift operator
$$\overline{T}\overline{e}_j=\overline{e}_{j-1}, \quad j \in \mathbb{Z}.$$
The kernel of $P$ is spanned by the vectors $\overline{e}_j$
for $j<0$, so $P$ is not injective.\hfill $\square$}
\end{ex}

On $V^T$ we define a descending filtration as follows. If $v=(v_i)_{i \in \mathbb{N}}$ we set
$$|v|:=\min\{ i \in \mathbb{N}: v_i \neq 0\}$$
with the convention that the minimum of the empty set equals $\infty$, i.e., $|0|=\infty$. For $k \in \mathbb{N}$ we set 
$$Z^T_k := \{v \in V^T\;\bigl|\; |v| > k\} = \{v \in V^T\mid v_1=\cdots =v_k=0\}.$$ 
Note that $Z^T_k$ is $\overline{T}$-invariant. Therefore if 
$$V^T_k:=V^T/Z^T_k$$
is the quotient vector space, then $\overline{T}$ induces a map
$$T_k \colon V^T_k \to V^T_k.$$
There is a well defined map
$$Q_k \colon V^T_k \to V$$
which is given for $[v]=[(v_i)_{i \in \mathbb{N}}] \in V^T_k$ by
$$Q_k[v]:=v_k.$$
Note that $Q_k$ is injective and satisfies
$$T Q_k=Q_k T_k.$$
We next describe its image. For this purpose set
$$V_T:=\bigcap_{j \in \mathbb{N}} T^j V \subset V.$$
Observe that $V_T$ is a $T$-invariant subspace of $V$ on which $T$ acts surjectively. In fact, $V_T$ is the largest $T$-invariant subspace of $V$ on which $T$ acts surjectively. In particular, $V_T=V$ iff $T$ is surjective. Moreover,
$$\mathrm{im}\,Q_k=V_T.$$
It is worth to mention that the image is independent of $k \in \mathbb{N}$.

Since $Z_k^T \subset Z^T_{k-1}$, we get projection maps
$$\pi_k \colon V^T_k \to V^T_{k-1}$$
and hence a direct system of vector spaces $(V^T, \pi)$.
If $v=[(v_i)_{i \in \mathbb{N}}] \in V^T_k$, we compute
$$Q_{k-1} \pi_k[v]=v_{k-1}=T v_k=T Q_k [v],$$
so the following diagram commutes:
\begin{equation*}
\begin{xy}
\xymatrix{
V_k^T
\ar[d]^{Q_k}_\cong
\ar[r]^{\pi_k}
& V_{k-1}^T
\ar[d]^{Q_{k-1}}_\cong\\
V_T
\ar[r]^{T} &
V_T\,.
}
\end{xy}
\end{equation*}
We denote by $V_T^T:=(V_T)^T$ the localization of $V_T$ with respect
to the map $T|_{V_T}$, defined by the inverse limit~\eqref{eq:VT}. So
the preceding commuting diagram yields

\begin{lemma}\label{vtt1}
The maps $Q_k$ give rise to an isomorphism  
$$Q: \underleftarrow{\lim}V^T_k \to V_T^T$$
which interchanges $\underleftarrow{\lim}T_k$ and $\overline{T}$. In particular, the
spaces $\underleftarrow{\lim}V^T_k$ and $V_T^T$ are isomorphic as
$\mathfrak{k}[u,u^{-1}]$-modules. \hfill$\square$
\end{lemma}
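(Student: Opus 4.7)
The plan is to recognize the commutative square displayed just before the lemma as an isomorphism of inverse systems, and then to apply functoriality of $\underleftarrow{\lim}$. By the discussion preceding the statement, each $Q_k$ is injective with image equal to $V_T$, so we may regard $Q_k$ as an isomorphism $V_k^T \xrightarrow{\cong} V_T$. The square $Q_{k-1}\circ\pi_k = T|_{V_T}\circ Q_k$ then says precisely that the family $(Q_k)_{k\in\N}$ is an isomorphism from the inverse system $(V_k^T,\pi_k)$ to the constant-object system in which every object is $V_T$ and every transition map is $T|_{V_T}$.

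Applying the inverse limit functor, I obtain an isomorphism
$$Q := \underleftarrow{\lim}\, Q_k : \underleftarrow{\lim}\, V_k^T \;\xrightarrow{\ \cong\ }\; \underleftarrow{\lim}(V_T, T|_{V_T}),$$
and by definition~\eqref{eq:VT} the right-hand side is exactly $V_T^T$. To verify the intertwining with the $T$-actions, I would unpack an element of $\underleftarrow{\lim}V_k^T$ as a compatible sequence $([v^{(k)}])_{k\in\N}$ with $\pi_k[v^{(k)}]=[v^{(k-1)}]$; using the previously established relation $T\,Q_k = Q_k\, T_k$ one computes
$$\overline{T}\,Q\bigl(([v^{(k)}])\bigr) = (T\,Q_k[v^{(k)}])_{k\in\N} = (Q_k\,T_k[v^{(k)}])_{k\in\N} = Q\bigl(\underleftarrow{\lim}T_k\,([v^{(k)}])\bigr),$$
which is the claimed intertwining. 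The final assertion about $\fk[u,u^{-1}]$-module structures then follows from Corollary~\ref{cor:module}, since $u$ acts as $\overline{T}$ on $V_T^T$ and as $\underleftarrow{\lim}T_k$ on $\underleftarrow{\lim}V_k^T$.

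I do not anticipate a genuine obstacle. The one point that deserves a moment of care is to verify that $Q$ actually lands in $V_T^T\subset\prod V_T$ rather than merely in $\prod V$: this is automatic because the $k$-th coordinate $Q_k[v^{(k)}] = v^{(k)}_k$ lies in the image of $Q_k$, which is $V_T$, and the compatibility $T|_{V_T}(v^{(k+1)}_{k+1}) = v^{(k)}_k$ follows directly from the compatibility of the $[v^{(k)}]$ under $\pi_k$ together with $Q_{k}\circ\pi_{k+1} = T|_{V_T}\circ Q_{k+1}$. Everything else is pure functoriality of $\underleftarrow{\lim}$ applied to an explicitly exhibited isomorphism of inverse systems.
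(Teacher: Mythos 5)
Your proposal is correct and is essentially the paper's own argument: the paper simply observes that the commuting squares $Q_{k-1}\circ\pi_k = T\circ Q_k$ exhibit an isomorphism of inverse systems from $(V_k^T,\pi_k)$ onto the constant system $(V_T,T|_{V_T})$, whose inverse limit is $V_T^T$ by definition~\eqref{eq:VT}. Your additional checks (that $Q$ lands in $V_T^T$ and intertwines $\underleftarrow{\lim}T_k$ with $\overline{T}$) just make explicit what the paper leaves to functoriality.
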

We further have
\begin{equation}\label{vtt}
V^T_T=V^T.
\end{equation}
Indeed, $V^T_T \subset V^T$ is clear. On the other hand, if
$v=(v_i)_{i \in \mathbb{N}} \in V^T$, then the condition $T
v_{i+1}=v_i$ implies that $v_i \in \bigcap_{j=1}^\infty T^j V=V_T$ for
every $i \in \mathbb{N}$ and therefore $v \in V^T_T$. This proves
(\ref{vtt}). We point out that if $T$ is surjective, then (\ref{vtt})
is actually trivial because in this case we already have $V_T=V$.

As a consequence of Lemma~\ref{vtt1} and equation (\ref{vtt}) we get
\begin{cor}
The  spaces $\underleftarrow{\lim}V^T_k$ and $V^T$ are isomorphic as $\mathfrak{k}[u,u^{-1}]$-modules.
\end{cor}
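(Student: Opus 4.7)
The plan is to derive the corollary as a direct composition of the two results just established. First I would invoke Lemma \ref{vtt1}, which supplies a $\fk[u,u^{-1}]$-module isomorphism
\[
Q \colon \underleftarrow{\lim}\, V^T_k \xrightarrow{\;\cong\;} V_T^T,
\]
where on the right one localizes the subspace $V_T=\bigcap_{j \in \N} T^j V$ with respect to $T|_{V_T}$, and $u$ acts as $\overline{T}$. All the module-theoretic bookkeeping (that $Q$ intertwines $\underleftarrow{\lim}\, T_k$ with $\overline{T}$) is already recorded in the statement of Lemma \ref{vtt1}, so nothing new has to be verified on that side.

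Next I would appeal to equation (\ref{vtt}), which asserts $V_T^T=V^T$. The content is that any compatible sequence $(v_i)\in V^T$ automatically satisfies $v_i=T^j v_{i+j}$ for every $j\ge 0$, so each $v_i$ lies in $\bigcap_{j} T^j V = V_T$ and hence $v\in V_T^T$; the reverse inclusion is obvious. Since the identification is by literal inclusion of subspaces of $\prod_{i} V$, it is automatically $\overline{T}$-equivariant, so it promotes to an equality of $\fk[u,u^{-1}]$-modules.

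Composing $Q$ with the identification $V_T^T=V^T$ yields the desired $\fk[u,u^{-1}]$-module isomorphism
\[
\underleftarrow{\lim}\, V^T_k \;\cong\; V^T.
\]
I do not foresee a real obstacle: the analytical content has been dispatched in Lemma \ref{vtt1} (construction of $Q$ and compatibility with the $u$-action) and in the proof of (\ref{vtt}) (set-theoretic equality $V_T^T=V^T$), and the corollary is merely the concatenation of these two facts. The only thing worth double-checking is notational: the symbol $V_T^T$ in (\ref{vtt}) must be read as $(V_T)^T$, matching the target of $Q$, but this is immediate from the definitions.
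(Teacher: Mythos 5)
Your proof is correct and is exactly the paper's argument: the corollary is stated there precisely ``as a consequence of Lemma~\ref{vtt1} and equation (\ref{vtt})'', i.e.\ the composition of the isomorphism $Q\colon \underleftarrow{\lim}\,V^T_k \to V_T^T$ with the equality $V_T^T=V^T$. Your remark that the identification $V_T^T=V^T$ is an honest equality of $\overline{T}$-invariant subspaces of $\prod_i V$, hence automatically a $\fk[u,u^{-1}]$-module identification, is the only point needing care, and you have handled it correctly.
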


Next, we will carry over the preceding discussion from vector spaces
to chain complexes. 

\begin{fed}\label{tatetriple}
A \emph{Tate triple} $(V,T,\partial)$ consists of
a vector space $V$, a linear map $T \colon V \to V$,
and a boundary operator $\partial \colon V \to V$ commuting
with $T$.
\end{fed}

Let $(V,T,\partial)$ be a Tate triple. We define a boundary operator
$\overline{\partial}$ on $V^T$ by setting for $(v_i)_{i \in
  \mathbb{N}} \in V^T$ 
$$\overline{\partial}(v_i)_{i \in \mathbb{N}}:=(\partial v_i)_{i \in \mathbb{N}}.$$
Note that $\overline{\partial}$ commutes with $\overline{T}$ 
and for every $k \in \mathbb{N}$ the subspace $V^T_k$ is invariant under $\overline{\partial}$. 
Therefore, $\overline{\partial}$ induces boundary operators
$$\partial_k \colon V^T_k \to V^T_k$$
which commute with the maps $T_k$.
Note further that $V_T$ is invariant under $\partial$ and the maps $Q_k$ identify the triples
$(V^T_k,T_k,\partial_k)$ and $(V_T,T,\partial)$. In particular, the map $Q_k$ induces an isomorphism
on homology
$$HQ_k \colon H(V^T_k, \partial_k) \to H(V_T,\partial)$$
which interchanges the induced maps
$$HT_k \colon H(V^T_k, \partial_k) \to H(V^T_k,\partial_k), \quad
HT \colon H(V_T,\partial) \to H(V_T,\partial).$$
Finally, we observe that the maps $\pi_k \colon V^T_k \to V^T_{k-1}$
interchange the boundary operators $\partial_k$ and $\partial_{k-1}$,
so that we obtain a direct system $\big(H(V_k^T,\partial_k), H
\pi_k\big)$. So we have proved the following generalization of
Lemma~\ref{vtt1}, where $H(V_T,\partial)^{HT}$ denotes the
localization of $H(V_T,\partial)$ with respect to the map $HT$.  

\begin{lemma}\label{hvt}
Assume that $(V,T,\partial)$ is a Tate triple. Then the maps $HQ_k$ give rise to an isomorphism
$$HQ \colon \underleftarrow{\lim}H(V_k^T,\partial_k) \to H(V_T,\partial)^{HT}$$
which interchanges $\underleftarrow{\lim} HT_k$ and $\overline{HT}$. In particular, the spaces
$\underleftarrow{\lim}H(V_k^T,\partial_k)$ and $H(V_T,\partial)^{HT}$ are isomorphic as
$\mathfrak{k}[u,u^{-1}]$-modules. \hfill$\square$
\end{lemma}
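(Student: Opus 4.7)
The plan is to deduce Lemma~\ref{hvt} from the vector-space statement Lemma~\ref{vtt1} by transporting the entire construction through the homology functor, exploiting the fact that each $Q_k$ is a chain isomorphism $(V_k^T,\partial_k)\to(V_T,\partial)$ intertwining $T_k$ and $T$.

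First I would unwrap the discussion preceding the lemma into three bookkeeping facts. (i) Since $\partial$ commutes with $T$, the subspace $V_T=\bigcap_j T^jV$ is $\partial$-invariant, so $(V_T,\partial)$ is a genuine subcomplex. (ii) For each $k$, $Q_k\colon V_k^T\to V_T$ is a linear isomorphism satisfying $\partial\, Q_k=Q_k\,\partial_k$ and $T\, Q_k=Q_k\, T_k$, hence it is an isomorphism of chain complexes equivariant with respect to the endomorphisms $T_k$ and $T$. (iii) The projections $\pi_k\colon V_k^T\to V_{k-1}^T$ satisfy $Q_{k-1}\pi_k=T\, Q_k$ on the nose. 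Applying homology gives an isomorphism $HQ_k\colon H(V_k^T,\partial_k)\to H(V_T,\partial)$ for each $k$, together with the relations
\[
HQ_{k-1}\circ H\pi_k=HT\circ HQ_k, \qquad HT\circ HQ_k=HQ_k\circ HT_k.
\]

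The second step is to identify the inverse limit. The family $\{HQ_k\}_{k\in\mathbb{N}}$ is then an isomorphism of inverse systems between $\bigl(H(V_k^T,\partial_k),H\pi_k\bigr)$ and the constant system on $H(V_T,\partial)$ with all transition maps equal to $HT$. By the very definition of the localization as an inverse limit in~\eqref{eq:VT}, applied to the pair $\bigl(H(V_T,\partial),HT\bigr)$, the inverse limit of the latter system is exactly $H(V_T,\partial)^{HT}$, and the endomorphism induced on it from the transition maps is $\overline{HT}$. Passing the isomorphism of inverse systems to the limit yields the desired map
\[
HQ\colon \underleftarrow{\lim} H(V_k^T,\partial_k)\xrightarrow{\ \cong\ } H(V_T,\partial)^{HT},
\]
and the identity $HT\circ HQ_k=HQ_k\circ HT_k$ implies that $HQ$ interchanges $\underleftarrow{\lim}HT_k$ with $\overline{HT}$. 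Since on both sides the variable $u$ acts precisely by these intertwined endomorphisms, $HQ$ is automatically an isomorphism of $\mathfrak{k}[u,u^{-1}]$-modules.

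The only potential obstacle is verifying that the chain-level structure really does transport all the commuting diagrams from Lemma~\ref{vtt1} through the homology functor; but because $Q_k$ is a genuine chain isomorphism and the diagrams commute on the nose at chain level, no $\mathrm{lim}^1$-type correction appears and the argument is essentially formal, reducing to the universal property of inverse limits.
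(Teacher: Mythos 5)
Your proposal is correct and follows essentially the same route as the paper: the paper derives Lemma~\ref{hvt} directly from the observation that the $Q_k$ identify the triples $(V_k^T,T_k,\partial_k)$ with $(V_T,T,\partial)$ and that $Q_{k-1}\pi_k=TQ_k$, then passes to homology and to the inverse limit exactly as you do. Your write-up merely makes explicit the formal step (isomorphism of inverse systems onto the constant system with transition maps $HT$) that the paper leaves implicit.
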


Recall that if $T$ is surjective, then $V_T=V$. Therefore, we obtain
\begin{cor}\label{locmain}
Assume that $(V,T,\partial)$ is a Tate triple and $T \colon V \to V$ is surjective. Then the spaces
$\underleftarrow{\lim}H(V_k^T,\partial_k)$ and $H(V,\partial)^{HT}$ are isomorphic as
$\mathfrak{k}[u,u^{-1}]$-modules. \hfill $\square$
\end{cor}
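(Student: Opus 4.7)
The plan is to derive this as an essentially immediate consequence of Lemma~\ref{hvt}. The only content beyond that lemma is the observation that surjectivity of $T$ forces $V_T=V$, after which no further work is required.

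First I would verify the identity $V_T=V$ when $T$ is surjective. By definition $V_T=\bigcap_{j\in\mathbb{N}}T^jV$. Surjectivity of $T$ means $TV=V$, and an easy induction shows $T^jV=V$ for every $j\in\mathbb{N}$, so the intersection collapses to $V$. Because $\partial$ preserves $V_T$ in general and here $V_T=V$, the Tate triple $(V_T,T|_{V_T},\partial|_{V_T})$ coincides with the original Tate triple $(V,T,\partial)$. In particular $H(V_T,\partial)=H(V,\partial)$ as chain homologies, and the induced map $HT$ on the latter agrees with the induced map on the former.

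Next I would invoke Lemma~\ref{hvt}, which produces an isomorphism
$$
HQ\colon \underleftarrow{\lim}H(V_k^T,\partial_k)\to H(V_T,\partial)^{HT}
$$
intertwining $\underleftarrow{\lim}HT_k$ with $\overline{HT}$. Substituting $V_T=V$ turns the right-hand side into $H(V,\partial)^{HT}$, which is precisely the group appearing in the corollary.

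The only step requiring a moment of care is the compatibility of the $\mathfrak{k}[u,u^{-1}]$-module structures: on the left $u$ acts via $\underleftarrow{\lim}HT_k$, while on the right $u$ acts via $\overline{HT}$ on $H(V,\partial)^{HT}$. Lemma~\ref{hvt} already guarantees the intertwining, so the isomorphism is automatically one of $\mathfrak{k}[u,u^{-1}]$-modules. I do not expect any real obstacle here; the corollary is simply the specialization of Lemma~\ref{hvt} to the case in which $V_T$ happens to be the whole space.
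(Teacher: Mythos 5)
Your proposal is correct and matches the paper's argument exactly: the paper likewise notes that surjectivity of $T$ gives $V_T=V$ and then specializes Lemma~\ref{hvt}. The module-structure compatibility you flag is indeed already built into that lemma, so nothing further is needed.
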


Without the surjectivity assumption the assertion of Corollary~\ref{locmain} can
fail, despite the fact that $V^T_T=V^T$ by (\ref{vtt}). The following
example describes such a scenario.

\begin{ex}{\rm
Let $V$ be a vector space with basis vectors
$\{e_{i,j}, f_{i,j}\mid i,j \in \mathbb{N}, j \geq i\}$. Define $T$ and $\partial$ on basis vectors by
$$T e_{i,j}:=\left\{\begin{array}{cc}
e_{i-1,j} & i \geq 2\\
0 & i=1,
\end{array}\right.,\quad T f_{i,j}:=\left\{\begin{array}{cc}
f_{i-1,j} & i \geq 2\\
0 & i=1,
\end{array}\right. $$
and
$$\partial f_{i,j}:=e_{i,j}+e_{i,j+1}, \quad \partial e_{i,j}:=0.$$
Note that $\partial$ is a boundary operator which commutes with $T$, so that $(V,T,\partial)$
is a Tate triple. However, the map $T$ is not surjective. We claim that 
\begin{equation}\label{triv}
V_T=\{0\}.
\end{equation}
To see this, we first decompose $V$ as follows. Define subspaces
$$E:=\big\langle e_{i,j}\mid i,j \in \mathbb{N}, j \geq i\big\rangle \subset V, \quad
F:=\big\langle f_{i,j}\mid i,j \in \mathbb{N}, j \geq i\big\rangle \subset V.$$
Note that
$$V=E \oplus F$$
and both subspaces are $T$-invariant. We therefore have
$$V_T=E_T \oplus F_T.$$
Moreover, observe that the linear map $\Phi \colon E \to F$ which is given on basis vectors
by $\Phi(e_{i,j})=f_{i,j}$ is a $T$-equivariant isomorphism between $E$ and $F$. Therefore,
$E_T$ is isomorphic to $F_T$ 
and we are left with showing that $E_T=\{0\}$. Indeed, for every $k \in \mathbb{N}$ we have
$$T^k E=\big\langle e_{i,j}\mid i,j \in \mathbb{N}, j \geq i+k\big \rangle$$
and therefore
$$E_T=\bigcup_{k \in \mathbb{N}}T^k E=\{0\}.$$
This finishes the proof of (\ref{triv}).
In view of Lemma~\ref{hvt} we deduce from (\ref{triv}) that
$$\underleftarrow{\lim}H(V_k^T,\partial_k)=\{0\}.$$
To compute $H(V,\partial)^{HT}$, we first describe $H(V,\partial)$. Note that 
for every $i \in \mathbb{N}$ the vector $e_{i,i}$ gives rise to a nontrivial homology class
$$\epsilon_i:=[e_{i,i}]$$
which coincides with $[e_{i,j}]$ for every $j \geq i$. The homology becomes
$$H(V,\partial)=\big \langle \epsilon_i\mid i \in \mathbb{N}\rangle,$$
and the induced map becomes the shift operator
$$HT \epsilon_i=\left\{\begin{array}{cc}
\epsilon_{i-1} & i>1 \\
0 & i=1.
\end{array}\right.
$$
We conclude that $H(V,\partial)^{HT}$ is isomorphic to the free
$\mathfrak{k}[u,u^{-1}]$-module on one generator.
In particular, $\underleftarrow{\lim}H(V_k^T,\partial_k)$ and
$H(V,\partial)^{HT}$ are not isomorphic. 
\hfill $\square$}
\end{ex}

{\em Remark. } We say that a Tate triple $(V,T,\partial)$ is \emph{graded of degree $d$}  if the vector space $V$ is additionally graded, $T \colon V \to V$ is a map of degree $\deg(T)=d$, and the boundary operator is a map of degree $\deg(\partial)=-1$. We can define a grading on $V^T$ as well by setting for a nonzero
$v=(v_i)_{i \in \mathbb{N}}$
$$\deg(v)=\deg(v_i)+i\,d,$$
for an arbitrary $i \in \mathbb{N}$ satisfying $v_i \neq 0$.
Note that this well defined, i.e., independent of the choice of $i$. In this setup, Lemma~\ref{hvt} is true
in the graded sense, where the ring of Laurent polynomials
$\mathfrak{k}[u,u^{-1}]$ also has 
to be graded with $\mathrm{deg}(u)=d$. \hfill $\square$

\section{$S^1$-equivariant Tate homology}

\subsection{Rabinowitz Floer homology on $\C$}\label{ss:Rab}

To define $S^1$-equivariant Tate homology via the Borel construction,
we look for a space with a free $S^1$-action and an invariant Morse
function whose Morse homology vanishes in all degrees. Such a space
can be described as follows. Consider the Hilbert space 
$$
   \LL_\C := \{z=(z_k)_{k\in\Z}\;\bigl|\;
   z_k\in\C,\ \sum_{k\in\Z}(1+k^2)|z_k|^2<\infty\} 
$$
with the Morse function $\A:\LL_\C\to\R$,
$$
   \A(z) := \pi\sum_{k\in\Z}k|z_k|^2. 
$$
(The reason for the normalization constant $\pi$ will become clear in
   a moment.) The action of the circle $S^1=\R/\Z$ on $\LL_\C$ via 
$$
   (\tau\cdot z)_k := e^{2\pi i\tau}z_k
$$
leaves $\A$ invariant and is free on the infinite dimensional sphere
$$
   S^\infty_\infty := \{z\in\LL_\C\;\bigl|\; \|z\|^2:=\sum_{k\in\Z}|z_k|^2=1\}\subset\LL_\C.
$$
The function $\A$ descends to a Morse function on the quotient
$\C P^\infty_\infty=S^\infty_\infty/S^1$ whose critical points $[z^{(\ell)}]$, $\ell\in\Z$, are
given by 
$$
   |z^{(\ell)}_\ell|=1,\quad z^{(\ell)}_k=0 \text{ for }k\neq\ell. 
$$ 
Note that the critical points have infinite index and coindex and
critical values $\A(z^{(\ell)})=\pi\ell$. 
The space $\LL_\C$ also carries a natural 
$\Z$-action given by the shifts
$$
   (n*z)_k := z_{k-n}. 
$$
This action preserves $S^\infty_\infty$, commutes with the $S^1$-action, and
satisfies
$$
   \A(n*z) = \A(z)+n\pi\|z\|^2. 
$$
In particular, $\Z$ acts on the critical points on $S^\infty_\infty$ by
$$
   n*z^{(\ell)} = z^{(\ell+n)}. 
$$
Identifying elements of $\LL_\C$ with Fourier series 
$$
   z(t) = \sum_{k\in\Z}z_ke^{2\pi ikt},
$$
we see that $\LL_\C$ corresponds to the Sobolev space $W^{1,2}(S^1,\C)$
and $\|\ \|$ to the $L^2$-norm 
$$
   \|z\|^2 = \int_0^1|z(t)|^2dt. 
$$
The function $\A$ is the classical symplectic action
$$
   \A(z) = \int_0^1z^*\lambda_\C = -\frac{1}{2}\int_0^1{\rm Im}(z\dot{\bar
     z})dt,\qquad \lambda_\C = \frac{1}{2}(x\,dy-y\,dx),
$$
and the actions of $S^1$ and $\Z$ are given by
$$
   \tau\cdot z(t) = e^{2\pi i\tau}z(t),\qquad n*z(t) = e^{2\pi
     int}z(t). 
$$
Note that the circle action on $\LL_\C$ is induced by the circle
action on the target $\C$ of the loop space and {\em not} on the
domain $S^1$. The reason is that this action is free on the sphere
$S^\infty_\infty$, while the action coming from circle action on the domain
is not. 

The restriction of $\A$ to the sphere $S^\infty_\infty$ can be conveniently
described in terms of the Rabinowitz action functional for the unit
circle in $\mathbb{C}$, 
$$
   \mathcal{A}^\mu \colon \mathscr{L}_{\mathbb{C}} \times \mathbb{R}
   \to \mathbb{R},
$$
$$
   \mathcal{A}^\mu(z,\eta) := \int
   z^*\lambda_{\mathbb{C}}-\eta\int\mu(z)dt.
$$
Here $\eta\in\R$ is a Lagrange multiplier and 
$$
   \mu \colon \mathbb{C} \to \mathbb{R}, \qquad z \mapsto\pi(|z|^2-1)
$$
is the moment map for the standard circle action $(t,z) \mapsto
e^{2\pi it}z$ on $\mathbb{C}$. The critical points of $\A^\mu$ appear
in critical circles obtained by applying the circle action to the
pairs 
$$
   w^{(\ell)}:=(z^{(\ell)},\ell),\qquad z^{(\ell)}(t)=e^{2\pi i\ell t}.
$$
They have actions $\A^\mu(z^{(\ell)},\ell)=\pi\ell$. The Rabinowitz
action functional is invariant under the circle action
$\tau\cdot(z,\eta)=(\tau\cdot z,\eta)$, and with respect to the
$\Z$-action 
$$
   n*(z,\eta) := (n*z,\eta+n)
$$
it satisfies
$$
   \A^\mu(n*z,\eta+n) = \A^\mu(z,\eta)+\pi n. 
$$
The gradient of $\A^\mu$ with respect to the $L^2$-metric on
$\LL_\C$ and the standard metric on $\R$ is given by
$$
   \nabla\A^\mu(z,\eta) = \Bigl(-i\dot z-2\pi\eta
   z,-\pi(\|z\|^2-1)\Bigr). 
$$
Thus (positive) gradient flow lines of $\A^\mu$ are maps
$(z,\eta):\R\to\LL_\C\times\R$ whose Fourier coefficients satisfy the
following system of ordinary differential equations, where $'$ denotes
the derivative with respect to $s\in\R$: 
\begin{equation}\label{eq:Fourier}
\begin{cases}
   z_k' = 2\pi(k-\eta)z_k \\
   \eta' = -\pi(\|z\|^2-1). 
\end{cases}
\end{equation}
Note that the subspaces where some of the $z_k$ are zero are invariant
under the gradient flow. In particular, gradient flow lines connecting
two critical points never pass through the constant loop $z\equiv 0$,
so they remain in the region $\LL_\C^*:=\LL_\C\setminus\{0\}$ where the
$S^1$-action is free and can be 
projected to $S^\infty_\infty\times\R$ by normalization. Since both the
action functional and the metric are $S^1$-invariant, gradient flow
lines descend to the quotient by $S^1$. Moreover, since the
$\Z$-action preserves the metric and changes the action functional
only by additive constants, we get a $\mathbb{Z}$-action on 
gradient flow lines which we will describe next. 


Note first that for integers $m\leq n$ the intersection of the stable
manifold (with respect to the negative gradient flow) of $[w^{(m)}]$ and
the unstable manifold of $[w^{(n)}]$ projects onto the
$2(n-m)$-dimensional projective subspace
$$
   \C P_m^n := \{[z]\;\bigl|\; z_k=0 \text{ for all }k<m \text{ and for all
   }k>n\}\subset\C P^\infty_\infty. 
$$
Moreover, according to~\cite[Proposition A.2]{frauenfelder}, the indices of a
Lagrange multiplier functional differ from the indices of the function
restricted to the constraint hypersurface only by a global shift. 
Hence the critical points $[w^{(m)}]$ and $[w^{(n)}]$ have finite
index difference $2(n-m)$. Note that the $\C P_m^n$ are closed
submanifolds of $\C P^\infty_\infty$ satisfying
$$
   \C P_m^n\cap\C P_{m'}^{n'} = \C P_{\max(m,m')}^{\min(n,n')}, 
$$
and the action by $\ell\in\Z$ maps $\C P_m^n$ onto $\C P_{m+\ell}^{n+\ell}$. 

The {\em equivariant Rabinowitz Floer complex on $\C$} (with
$\Z$-coefficients) is the Floer chain complex of the functional
$\A^\mu$ on the quotient space $(\LL_\C^*\times\R)/S^1$: Its chain
group $FC^{S^1}$ is the free $\Z$-module generated by the critical points
$[w^{(n)}]$, $n\in\Z$, and its boundary operator $\p^{S^1}$ counts
negative gradient lines between critical points of index difference
one. Let us write the group ring of $\Z$ 
as the ring 
$$
   \Lambda := \Z[u,u^{-1}],\qquad |u|=2
$$
of Laurent polynomials in a formal variable $u$ of degree $2$. The
$\Z$-action gives an isomorphism between $\Lambda$ and the chain group
$FC^{S^1}$ by identifying $u^n$ with the critical
point $w^{(n)}$ for $n\in\Z$. Since the critical points $[w^{(n)}]$ and
$[w^{(m)}]$ on the quotient $S^\infty_\infty/S^1=\C P^\infty_\infty$ have index
difference $2(n-m)$, this isomorphisms fixes gradings of the
critical points compatible with the relative gradings.
Since all index differences are even, the boundary operator vanishes
and the equivariant Rabinowitz Floer homology equals $\Lambda$. 

On the other hand, since the unit circle in $\C$ is displaceable by a
Hamiltonian diffeomorphism, it follows
from~\cite{cieliebak-frauenfelder} that the non-equivariant Rabinowitz
Floer homology vanishes. So we have shown

\begin{lemma}
The non-equivariant Floer homology $FH(A^\mu)$ vanishes. The
equivariant Floer homology $FH^{S^1}(\A^\mu)$ equals $\Lambda =
\Z[u,u^{-1}]$, the ring of Laurent polynomials in a formal variable $u$
of degree $2$. \hfill$\square$
\end{lemma}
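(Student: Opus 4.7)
My plan is to prove the two claims separately, leveraging the structural observations already assembled in the discussion preceding the statement.

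For the equivariant statement, I would proceed as follows. First, identify the chain group $FC^{S^1}$ as the free $\Z$-module on the critical orbits $[w^{(n)}]$, $n\in\Z$, using the $\Z$-action to label generators $[w^{(n)}]\leftrightarrow u^n$. Next, verify that the Conley-Zehnder-type relative grading satisfies $|[w^{(n)}]|-|[w^{(m)}]|=2(n-m)$: this comes from the fact that the intersection of the stable manifold of $[w^{(m)}]$ with the unstable manifold of $[w^{(n)}]$ projects to $\C P_m^n$, which has real dimension $2(n-m)$, combined with the index-shift identity for Lagrange multiplier functionals cited from~\cite[Proposition A.2]{frauenfelder}. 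Since the Floer differential has degree $-1$ while all index differences are even, parity forces $\partial^{S^1}=0$ on nose. The chain complex therefore equals its homology, and the identification $u^n\leftrightarrow[w^{(n)}]$ gives $FH^{S^1}(\A^\mu)\cong\Lambda$ as graded $\Z$-modules with $|u|=2$, since $u$ corresponds to the shift $[w^{(n)}]\mapsto[w^{(n+1)}]$ of index $2$.

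For the non-equivariant statement, the proof is by invocation. The unit circle $S^1\subset\C$ is obviously displaceable by a compactly supported Hamiltonian diffeomorphism (translate by any vector of norm larger than $2$ and cut off), so the vanishing theorem of~\cite{cieliebak-frauenfelder} applies and yields $FH(\A^\mu)=0$.

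The main obstacle, were one to arise, lies in the foundations rather than the argument itself: one must ensure that Rabinowitz Floer homology is well-defined in this setting, which requires Morse-Bott nondegeneracy of the critical circles (clear from the explicit Fourier description~\eqref{eq:Fourier}), compactness of moduli spaces of gradient trajectories, and transversality after a small $S^1$-equivariant perturbation. Since $\A^\mu$ is a Lagrange multiplier functional on an infinite-dimensional Hilbert space rather than a standard action functional, the usual Floer package needs to be adapted; fortunately this has been carried out in~\cite{cieliebak-frauenfelder,frauenfelder} for precisely this type of functional, so the proof reduces to the two parity and displaceability arguments above.
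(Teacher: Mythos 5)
Your proposal is correct and follows essentially the same route as the paper: the equivariant statement is proved by observing that all relative indices of the critical points $[w^{(n)}]$ are even (via the projection to $\C P_m^n$ and the Lagrange multiplier index shift from \cite{frauenfelder}), so the boundary operator vanishes by parity and the complex equals $\Lambda$; the non-equivariant statement follows from displaceability of the unit circle and the vanishing theorem of \cite{cieliebak-frauenfelder}. Your closing remarks on the analytic foundations are a reasonable supplement but do not change the argument.
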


Let us describe the non-equivariant Rabinowitz Floer complex more
explicitly. We fix a Morse function on the critical circle
corresponding to $w^{(0)}$ with two critical points, the maximum
$w^{(0)}_+$ and the minimum $w^{(0)}_-$. Via the $\Z$-action, we
obtain Morse functions on the critical circles corresponding to
$w^{(n)}$ with maximum $w^{(n)}_+=u^nw^{(0)}_+$ and minimum
$w^{(n)}_+=u^nw^{(0)}_+$. Their indices are
$$
   |w^{(n)}_+| = 2n+1,\qquad |w^{(n)}_-| = 2n,
$$ 
and vanishing of non-equivariant Rabinowitz Floer homology implies
that 
$$
   \p w^{(n)}_+ = 0,\qquad \p w^{(n)}_- = \pm w^{(n-1)}_+.
$$
Since the boundary operator decreases the action, we have for each
$b\in\R$ a subcomplex $FC^b\subset FC$ generated by critical points of
action $\leq b$. For $a\leq b$ we denote by $FC_a^b$ the quotient of
$FC^b$ by the subcomplex generated by critical points of action
$<a$, and we set $FC_a:=FC_a^\infty$. We denote the corresponding
homology groups by $FH^b$, $FH_a^b$, and $FH_a$. The same definitions
apply in the equivariant case. Then the explicit description of the
chain complexes above yields the filtered Rabinowitz Floer homology
groups for integers $m\leq n$: 
\begin{gather*}
   FH_m^n(\A^\mu) = \Z w_+^{(n)}\oplus \Z w_-^{(m)},\cr
   FH^n(\A^\mu) = \Z w_+^{(n)},\qquad
   FH_m(\A^\mu) = \Z w_-^{(m)},\cr
   FH_m^{S^1,n}(\A^\mu) = u^m\Z[u]/u^{n+1}\Z[u],\cr
   FH^{S^1,n}(\A^\mu) = u^n\Z[u^{-1}],\qquad
   FH_m^{S^1}(\A^\mu) = u^m\Z[u].
\end{gather*}

\subsection{$S^1$-equivariant Tate homology}\label{ss:Tate}
In this section we give a Morse theoretic argument for
Borel's localization theorem for circle actions,
cf.~\cite{atiyah-bott}.  
Consider a closed manifold $M$ with a circle action. Choose on $M$  a nonnegative $S^1$-invariant
Morse-Bott function $f:M\to\R$ with the following properties:
\begin{enumerate}
\item[(i)] $f(x)=0$ if and only if $x \in \mathrm{Fix}(S^1)$;
\item[(ii)] the components of the critical manifold of positive action 
consist of Bott nondegenerate
critical circles on which the action is locally free and thus defines
a finite cover $S^1\to S^1$. 
\end{enumerate}
The existence of such a function follows from the results of Wasserman
in \cite{wasserman}.

We now define the $S^1$-equivariant Tate complex of $(M,f)$ by a Morse
theoretic version of the Borel 
construction using the Rabinowitz Floer complex on $\C$. Set
$\LL_\C^*:=\LL_\C\setminus\{0\}$ and let 
$$
   \wh M := (M\times\LL_\C^*\times\R)/S^1
$$
be the quotient by the (free) diagonal circle action on
$M\times\LL_\C$. Since $f$ and $\A^\mu$ are $S^1$-invariant, they
descend to functions 
$$
   \wh f,\wh\A^\mu:\wh M\to\R
$$
that are invariant and Morse-Bott with respect to the anti-diagonal
circle action. We define the {\em $S^1$-equivariant Tate complex}
$\wh C^{S^1}(M,f)$ of $(M,f)$ as the (non-equivariant) Morse chain
complex of $\wh f+\wh\A^\mu:\wh M\to\R$. Its homology is the 
{\em $S^1$-equivariant Tate homology} $\wh H^{S^1}(M,f)$.

More precisely, we pick a generic family $g$ of metrics on the fibres
of the fibration $M\to (M\times S^\infty_\infty)/S^1\to\C P^\infty_\infty$ which is
invariant under the $\Z$-action on $(M\times S^\infty_\infty)/S^1$ given by
$n*[x,z]=[x,n*z]$. Note that such $g$ can be constructed
inductively over the strata $\C P_m^n$ of increasing dimensions.
(If $g$ is already constructed over the $\C P_m^n$ with $n-m < k$ in
a $\Z$-invariant way, we extend it arbitrarily to $\C P_0^k$ and then
by $\Z$-invariance to all $\C P_m^n$ with $n-m=k$; this is possible
because $g$ is already defined on the intersections $\C
P_m^n\cap\C P_{m+\ell}^{n+\ell}=\C P_{m+\ell}^n$ for all $\ell\geq
0$.) 
Consider the pullback diagram
\begin{equation*}
\begin{xy}
\xymatrix{
M \ar[r] & (M\times \LL_\C^*\times\R)/S^1 \ar[d] \ar[r] & (\LL_\C^*\times\R)/S^1 \ar[d] \\
M \ar[r] & (M\times S^\infty_\infty)/S^1 \ar[r] & \C P^\infty_\infty,
}
\end{xy}
\end{equation*}
where the vertical maps are induced by the normalization map
$\LL_\C^*\to S^\infty_\infty$, $z\mapsto z/\|z\|$. Thus $g$ induces a
family of metrics on fibres of the fibration $M\to\wh
M\to(\LL_\C^*\times\R)/S^1$, which combines with the pullback of the
$L^2$-metric on $\LL_\C$ and the standard metric on $\R$ 
(with respect to some choice of horizontal subspaces) 
to a metric
on $\wh M$. The boundary operator of the Morse chain complex of $\wh
f+\wh\A^\mu$ counts negative gradient flow lines with respect to such
a metric. Here are some properties of this construction. 

(0) Since $\wh f+\wh\A^\mu$ is still invariant under the anti-diagonal
$S^1$-action on $\wh M$, its critical points appear in Bott
nondegenerate families of the following types:
\begin{enumerate}
\item[(i)] $C\times\{[w^{(\ell)}]\}$ for a component $C\subset M$ of
  the fixed point set;
\item[(i)] $(\gamma\times S^1\cdot w^{(\ell)})/S^1$ for a critical
  circle $\gamma\subset M$ of $f$ outside the fixed point set.
\end{enumerate}
To define the Morse chain complex, we pick additional Morse functions
on all these critical components and count cascades of negative
gradient flow lines. 

(1) The $\mathbb{Z}$-action on the Rabinowitz Floer chain complex
induces a $\mathbb{Z}$-action on the Tate chain complex which gives
Tate homology the structure of a module over $\Lambda=\Z[u,u^{-1}]$. 

(2) Under the Tate boundary operator, both $\wh f$ and $\wh\A^\mu$ are
nonincreasing. Since gradient flow lines of $\wh f+\wh\A^\mu$ project
onto gradient flow lines of $\A^\mu$, the index of critical points of
$\A^\mu$ is also nonincreasing. However, the index of critical points
of $f$ may increase due to the fact that not every metric on $M$ in
the generic family $g$ has to be generic. 

(3) The filtration by values of $\wh f$ yields a spectral sequence.
Since $M$ is a closed manifold, the Morse-Bott function is bounded
from above and below, and hence the spectral sequence is bounded.
Therefore it converges to Tate homology, see \cite[Theorem 5.5.1]{weibel}.
Its first page is the direct sum of contributions from the critical
components of $f$, with boundary operator given by the Rabinowitz
Floer boundary operator. Let us compute the corresponding ``local
homologies''.  

If $p\in M$ is a fixed point of the $S^1$-action (and thus a critical
point of $f$), then the local homology
is just the equivariant Rabinowitz Floer homology
$\Lambda=\Z[u,u^{-1}]$, shifted by the index of $p$. 
In particular, if the circle action on $M$ is trivial, then $\wh
H^{S^1}(M,f)=H(M;\Lambda)$ is just the homology of $M$ with
coefficients in $\Lambda$.  

Consider now a critical circle $\gamma$ of $f$ on which the circle
action is an $n$-fold covering, $n\in\N$. Then $(\gamma\times
S^\infty_\infty)/S^1$ is the infinite dimensional lens space
$S^\infty_\infty/\Z_n$ obtained as the quotient by the stabilizer subgroup
$\Z_n\subset S^1$. Picking a Morse function with two critical points
on each critical fibre of the degree $n$ circle bundle
$S^\infty_\infty/\Z_n\to\C P^\infty_\infty$ gives us generators $w_\pm^\ell$ of
indices $|w_-^\ell|=2\ell$ and $|w_+^\ell|=2\ell+1$, for $\ell\in\Z$. 
To compute the
boundary operator on this complex, we need to distinguish two cases. 
Let us call $\gamma$ {\em good} if the tangent bundle to the unstable
manifold of $f$ along $\gamma$ is orientable, and {\em bad} otherwise
(the latter can only happen for $n$ even, see the following proof). 

\begin{lemma}\label{lem:circle}
If $\gamma$ is good, then the boundary maps on the first page are
given by\footnote{
We actually only determine the coefficients $\pm n,\pm 2$ up to
signs. They can arranged to be positive by replacing some generators by
their negatives if necessary. The same remark applies to subsequent
computations.}  
$$
   \cdots \stackrel{\cdot n}\longrightarrow w_+^\ell 
   \stackrel{0}\longrightarrow w_-^\ell
   \stackrel{\cdot n}\longrightarrow w_+^{\ell-1} 
   \stackrel{0}\longrightarrow w_-^{\ell-1}
   \stackrel{\cdot n}\longrightarrow \cdots 
$$
If $\gamma$ is bad, then the boundary maps on the first page are
given by
$$
   \cdots \stackrel{0}\longrightarrow w_+^\ell 
   \stackrel{\cdot 2}\longrightarrow w_-^\ell
   \stackrel{0}\longrightarrow w_+^{\ell-1} 
   \stackrel{\cdot 2}\longrightarrow w_-^{\ell-1}
   \stackrel{0}\longrightarrow \cdots 
$$
\end{lemma}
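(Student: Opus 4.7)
The plan is to compute the local Morse--Bott cascade complex for $\wh\A^\mu$ restricted to the stratum $(\gamma\times\LL_\C^*\times\R)/S^1$ associated with the critical circle $\gamma$ (note that $\wh f$ is constant on this stratum, so only $\wh\A^\mu$ plays a role). Fixing a basepoint in $\gamma$, whose $S^1$-stabilizer is $\Z_n$, identifies this stratum with $L^\infty_n\times\R$, where $L^\infty_n:=S^\infty_\infty/\Z_n$ is the infinite lens space, equipped with the $S^1$-bundle projection $L^\infty_n\to\C P^\infty_\infty$. Above each critical point $[w^{(\ell)}]\in\C P^\infty_\infty$ sits a critical circle $C_\ell\cong S^1$ of $\wh\A^\mu$. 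I would choose the $\Z$-equivariant perfect height function on each $C_\ell$, producing the generators $w_-^\ell$ (minimum, index $2\ell$) and $w_+^\ell$ (maximum, index $2\ell+1$).

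Next I would enumerate the rigid cascades contributing to $\partial$. A dimension count---using that the index difference between $[w^{(\ell)}]$ and $[w^{(\ell-1)}]$ in $\C P^\infty_\infty$ is $2$ and that $\dim C_\ell=1$---rules out cascades with more than one jump and cascades connecting non-adjacent levels. What remains are (a) \emph{zero-jump} cascades inside a single $C_\ell$, contributing only to $\partial w_+^\ell$, and (b) \emph{one-jump} cascades between $C_\ell$ and $C_{\ell-1}$, contributing only to $\partial w_-^\ell$. For (a) the height function on $C_\ell\cong S^1$ has exactly two gradient trajectories from $w_+^\ell$ to $w_-^\ell$, whose signs are governed by the orientation of the unstable bundle of $f$ along $\gamma$: in the good case (bundle orientable) the signs are opposite and cancel, giving $\partial w_+^\ell=0$; in the bad case (bundle non-orientable, which forces $n$ even) the monodromy aligns them, giving $\partial w_+^\ell=\pm 2\,w_-^\ell$.

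For (b) the moduli $\mathcal M([w^{(\ell)}],[w^{(\ell-1)}])/\R$ in $\C P^\infty_\infty$ is the $S^1$-family of meridians on $\C P_{\ell-1}^\ell\cong S^2$, and lifting along the $S^1$-bundle $L^\infty_n\to\C P^\infty_\infty$ yields a two-dimensional moduli of unparametrized trajectories in $L^\infty_n$. Imposing the evaluation conditions $\mathrm{ev}_-=w_-^\ell$ and $\mathrm{ev}_+=w_+^{\ell-1}$ cuts this down to a single $\Z_n$-orbit of lifts of one trajectory, hence to exactly $n$ rigid cascades (the $n$ lifts arise because the start-evaluation map from meridians onto $C_\ell$ factors through the $n$-fold cover $S^1\to S^1/\Z_n$). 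Their signs are dictated by the $\Z_n$-action on the normal orientation line of the unstable bundle of $f$ along $\gamma$: in the good case this action is trivial, all $n$ lifts carry the same sign, and $\partial w_-^\ell=\pm n\,w_+^{\ell-1}$; in the bad case the generator of $\Z_n$ acts by $-1$, so the $n$ (even) lifts cancel in pairs and $\partial w_-^\ell=0$.

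The main obstacle is the sign book-keeping in step (b). One must carefully trace coherent orientations on the cascade moduli through the identification $(\gamma\times S^\infty_\infty)/S^1\cong S^\infty_\infty/\Z_n$ and verify that the $\Z_n$-action on lifts is by exactly the same sign character that measures orientability of the unstable bundle of $f$ along $\gamma$. Once this equivariance of orientations is established, the dichotomy of Lemma~\ref{lem:circle} follows formally: good orbits produce the standard cellular boundary pattern of $B\Z_n$ extended to the bi-infinite Tate complex, while bad orbits produce its sign-twisted counterpart in which the roles of the two kinds of generators are interchanged.
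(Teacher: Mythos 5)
Your overall strategy coincides with the paper's up to the decisive point: you identify the local complex with the Morse complex of the doubly infinite lens space $L_n=S^\infty_\infty/\Z_n$ with coefficients twisted by the orientation bundle of the unstable bundle $E^-$ along $\gamma$, you correctly enumerate the contributing trajectories (two zero-jump ones on each critical circle $C_\ell$, and $n$ one-jump ones between adjacent circles, with all other configurations excluded by index reasons), and your step (a) — the two trajectories on $C_\ell$ cancel iff $E^-$ is orientable over $\gamma$ — is exactly the paper's argument. The gap is in step (b). The claim that the $n$ one-jump lifts carry signs governed precisely by the character $\Z_n\to\{\pm1\}$ that measures orientability of $E^-$ \emph{is} the content of that half of the lemma, and you explicitly defer its verification (``once this equivariance of orientations is established, the dichotomy follows formally''). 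As written, the coefficients $\pm n$ (good) and $0$ (bad) on the arrows $w_-^\ell\to w_+^{\ell-1}$ are therefore asserted, not proven; establishing the required $\Z_n$-equivariance of the coherent orientations on the covering trajectory spaces is a nontrivial piece of work.

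The paper closes exactly this gap without ever analyzing orientations on the one-jump moduli. In the good case the local system is trivial, so the complex computes the ordinary homology of $L_n$, which is $\{0\}$ in even degrees and $\Z_n$ in odd degrees by the standard computation (Hatcher, Example 2.43); since the complex has exactly one generator in each degree, this forces $\p w_+^\ell=0$ and $\p w_-^\ell=\pm n\,w_+^{\ell-1}$. In the bad case the direct count from step (a) gives $\p w_+^\ell=\pm2\,w_-^\ell$, and then $\p\circ\p=0$ forces $\p w_-^\ell=0$ — no sign analysis of the $n$ lifts is needed at all. If you wish to keep your direct route you must actually carry out the equivariance argument for the coherent orientations; otherwise, substitute these two indirect arguments, which render the sign bookkeeping in step (b) unnecessary.
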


\textbf{Proof: }
Pick a point $x$ on $\gamma$ and denote by $\Phi_x:T_xM\to T_xM$ the
linearization of the $S^1$-action at time $1/n$. Since $\Phi_x$
preserves the tangent space $E_x^-$ to the unstable manifold at $x$,
the unstable bundle $E^-\to\gamma$ along $\gamma$ is isomorphic to the
bundle $[0,1]\times E_x^-/(0,v)\sim(1,\Phi_x\cdot v)$. So $E^-$ is
non-orientable (i.e., $\gamma$ is bad) if and only if
$\det(\Phi_x|_{E_x^-})=-1$. Now $\Phi_x^n=\Id$ implies $(-1)^n=1$,
which is only possible if $n$ is even. Consider the Morse complex of a
Morse function with two critical points on $\gamma$ with local
coefficients in $E^-$ (or rather its orientation bundle). The two
gradient trajectories from the maximum to the minimum occur in the
boundary operator with opposite signs if $\gamma$ is good, and with
the same sign if $\gamma$ is bad. Hence the (non-equivariant)
Morse homology of $\gamma$ with local coefficients in $E^-$ equals 
\begin{align*}
   H_*(S^1;E^-) &= \begin{cases}
      \Z & *=0 \cr \Z & *=1
   \end{cases}\qquad \text{if $\gamma$ is good}, \cr
   H_*(S^1;E^-) &= \begin{cases}
      \Z_2 & *=0 \cr \{0\} & *=1
   \end{cases}\qquad \text{if $\gamma$ is bad}.
\end{align*}
To compute the equivariant homology, let us identify $\gamma\cong S^1=\R/\Z$ with
the $S^1$-action $(\tau,t)\mapsto t+n\tau$. The associated critical
manifold $(\gamma\times S^\infty_\infty)/S^1$ is diffeomorphic to the
doubly infinite lens space $L_n:=(S^\infty_\infty)/\Z_n$ via the map
sending $[t,z]$ to $[e^{-2\pi it/n}z]$, with inverse map $[z]\mapsto
[0,z]$. Under this diffeomorphism the loop $t\mapsto[t,z]=[0,e^{-2\pi
it/n}z]$, $t\in[0,1]$ (with $z\in S^\infty_\infty$ fixed), corresponds
to a generator of $\pi_1(L_n)=\Z_n$. Thus the pullback bundle
$\pi^*E^-\to L_n$ under the projection $[t,z]\mapsto t$ is
orientable over the generator of $\Z_n$ if and only $E^-$ is orientable. 
The Morse-Bott function $h(z)=\sum_{k\in\Z}k|z_k|^2$ on $L_n$ has
critical circles $w^\ell=\{z_j=0\text{ for }j\neq \ell\}$ of index
$2\ell$ for each $\ell\in\Z$. Perturbing $h$ by Morse functions with
two critical points on each critical circle, we obtain generators
$w_+^\ell$ of index $2\ell+1$ and $w_-^\ell$ of index $2\ell$. For
each $\ell$ there are two gradient lines from $w^\ell_+$ to
$w^\ell_-$, and $n$ gradient lines from $w^{\ell+1}_-$ to $w^\ell_+$.  

For $n=1$ the lens space is the sphere and its homology is the
Rabinowitz Floer homology of $\mathcal{A}^\mu$, which vanishes
by~\cite{cieliebak-frauenfelder}. For $n\geq 2$, we again distinguish
two cases. 

If $\gamma$ is good, then the bundle $\pi^*E^-\to L_n$ is orientable
and the homology of $L_n$ with coefficients in $\pi^*E^-$ is just the
ordinary homology of the doubly infinite lens space, which by the 
computation in~\cite[Example 2.43]{hatcher} is given by  
$$
   H_*(L_n;\pi^*E^-) = \begin{cases}
      \{0\} & * \text{ even} \cr \Z_n & * \text{ odd}
   \end{cases}\qquad \text{if $\gamma$ is good}. 
$$
If $\gamma$ is bad, then the bundle $\pi^*E^-\to L_n$ is
non-orientable over each circle $w^\ell$ (which represents a generator
of $\pi_1(L_n)$), so the Morse boundary operator $\p$ maps $w^\ell_+$
to $2w^\ell_-$. The relation $\p\circ\p=0$ then enforces $\p
w^\ell_-=0$, so the homology is given by  
$$
   H_*(L_n;\pi^*E^-) = \begin{cases}
      \Z_2 & * \text{ even} \cr \{0\} & * \text{ odd}
   \end{cases}\qquad \text{if $\gamma$ is bad}.
$$
Since the chain complex has only one generator in each degree, in
order to yield these homology groups the boundary operator must have
(up to signs) the multiplicities given in the lemma. 
\hfill$\square$

{\em Remark. }
The homology of the lens space $L_n$ with local coefficients in
$\pi^*E^-$ in the preceding proof can also be obtained more
algebraically as follows. The vanishing of Rabinowitz Floer 
homology of $\mathcal{A}^\mu$ implies that the local chain complex is
a complete resolution of the group $\mathbb{Z}_n$ in the sense of 
\cite{brown}. If the periodic orbit $\gamma$ is good, then with
integer coefficients the local Tate homology is given by the Tate
homology of the group $\mathbb{Z}_n$,
$$\widehat{H}_*(\mathbb{Z}_n;\mathbb{Z})=\left\{\begin{array}{cc}
\{0\} & *\,\,\mathrm{even}\\
\mathbb{Z}_n & *\,\,\mathrm{odd}.
\end{array}\right.
$$
If $\gamma$ is bad, then $n$ is necessarily even and we have to consider the
twisted $\mathbb{Z}\mathbb{Z}_n$-module $\widehat{\mathbb{Z}}$ where the generator of
$\mathbb{Z}_n$ acts via $-1$. In this case the local Tate homology is
given by 
$$\widehat{H}_*(\mathbb{Z}_n;\widehat{\mathbb{Z}})=\left\{\begin{array}{cc}
\mathbb{Z}_2 & *\,\,\mathrm{even}\\
\{0\} & *\,\,\mathrm{odd}.
\end{array}\right.
$$

\begin{cor}\label{cor:circle}
The contribution of a critical circle $\gamma$
of covering number $n$ to the homology of the first page equals 
\begin{itemize}
\item $\{0\}$ if $n=1$, 
\item $\Z_n[u,u^{-1}]$ shifted by $|\gamma|+1$ if $n\geq 2$ and
$\gamma$ is good, 
\item $\Z_2[u,u^{-1}]$ shifted by $|\gamma|$ if $\gamma$ is bad. 
\end{itemize}
In particular, if the circle action on $M$ is free,
then $\wh H^{S^1}(M,f)=0$.
\hfill$\square$ 
\end{cor}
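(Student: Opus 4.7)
The plan is to read off the homology directly from the two chain complexes furnished by Lemma~\ref{lem:circle}, and then specialize to the free-action case.

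For a critical circle $\gamma$ of covering number $n \geq 1$, in the good case Lemma~\ref{lem:circle} presents the first-page complex as a $\Z$-graded complex with one generator in each degree and boundary $\partial w_-^\ell = \pm n\, w_+^{\ell-1}$, $\partial w_+^\ell = 0$. The computation is then elementary: in even degree $|w_-^\ell|$ the cycles vanish (since multiplication by $n$ is injective on $\Z$), while in odd degree $|w_+^\ell|$ one has $\ker\partial/\mathrm{im}\,\partial = \Z/n\Z$. This is trivial for $n=1$ and equals $\Z_n$ for $n \geq 2$. The $\Z$-action from Section~\ref{ss:Rab} sends $w_\pm^\ell \mapsto w_\pm^{\ell+1}$ and shifts degrees by $2$; identifying it with multiplication by the formal variable $u$ of degree $2$ exhibits the homology as a $\Lambda$-module isomorphic to $\Z_n[u,u^{-1}]$, globally shifted by $|w_+^0| = |\gamma|+1$. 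In the bad case the roles of $w_+^\ell$ and $w_-^\ell$ swap: the boundary reads $\partial w_+^\ell = \pm 2\,w_-^\ell$ and $\partial w_-^\ell = 0$, and the same argument places $\Z/2\Z$ in each even degree $|w_-^\ell|$, giving $\Z_2[u,u^{-1}]$ shifted by $|\gamma|$.

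For the last sentence of the corollary, assume the $S^1$-action on $M$ is free. Then $\mathrm{Fix}(S^1) = \emptyset$, so there are no type (i) critical components, and every critical circle of $f$ has covering number $n=1$ (all isotropy groups being trivial), so each contributes zero by the first bullet. Hence the $E_1$-page of the spectral sequence from point~(3) of Section~\ref{ss:Tate} is identically zero, and by its convergence we conclude $\wh H^{S^1}(M,f) = 0$. The only real obstacle is the bookkeeping of the gradings and of the $\Lambda$-module structure; the homology computation itself is immediate once Lemma~\ref{lem:circle} is in hand.
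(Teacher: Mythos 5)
Your proposal is correct and matches the paper's (implicit) argument: the corollary is stated as an immediate consequence of Lemma~\ref{lem:circle}, and your computation of the homology of the two periodic complexes, the identification of the $\Z$-action with multiplication by $u$, and the observation that a free action forces $n=1$ on every critical circle together with convergence of the bounded spectral sequence are exactly the steps being suppressed. Nothing is missing.
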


With $\Q$-coefficients, the contribution of each critical circle $\gamma$
to the homology of the first page vanishes, so only the contributions
from the fixed points remain. The boundary operator on the second page
counts gradient flow lines of $f$ connecting Morse critical points. 
After this, the spectral sequence collapses. Indeed, since the Morse-Bott function
$f$ is constant $0$ on the fixed points, different components of the fixed point
set cannot interact via gradient flow lines. We have thus derived Borel's localization theorem
(see~\cite{atiyah-bott}):

\begin{cor}
The Tate homology with $\Q$-coefficients equals the homology of the
fixed point set $M_{S^1}$ with coefficients in $\Lambda_\Q:=\Q[u,u^{-1}]$,
$$
   \wh H^{S^1}(M,f;\Q) \cong H(M_{S^1};\Lambda_\Q) \cong
   H(M_{S^1})\otimes_\Q\Q[u,u^{-1}]. 
$$ 
In particular, if the circle action on $M$ has no fixed points, then
$\wh H^{S^1}(M,f;\Q)=0$. 
\end{cor}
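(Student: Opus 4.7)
The plan is to run the spectral sequence from item (3) of the preceding discussion, which arises from the filtration of the Tate chain complex by values of $\wh f$, and to combine it with the local homology computation of Corollary~\ref{cor:circle}. Since $M$ is closed and $f$ is bounded, the spectral sequence converges to $\wh H^{S^1}(M,f;\Q)$.

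First I would pass to the first page: by the remark in (3) above, $E^1$ decomposes as a direct sum of local contributions from the critical components of $f$, with $d^1$ given by the Rabinowitz Floer boundary. For a critical circle $\gamma$ with covering number $n\geq 2$, Corollary~\ref{cor:circle} gives a contribution of either $\Z_n[u,u^{-1}]$ or $\Z_2[u,u^{-1}]$ (depending on whether $\gamma$ is good or bad), which becomes zero after tensoring with $\Q$. For $n=1$ the contribution is already zero. Thus with rational coefficients, the only surviving contributions on the first page come from the fixed-point components, each Morse critical point $p\in M^{S^1}$ contributing a copy of $\Lambda_\Q$ shifted by $|p|$, with trivial $d^1$.

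Next I would identify $E^2$: the next differential is induced by the Morse flow of $\wh f$, which projects to the Morse flow of $f$ on the fixed-point set. With respect to the chosen Morse perturbations on the components of $M^{S^1}$, this is exactly the Morse differential of $M^{S^1}$ with coefficients in $\Lambda_\Q$, so
$$
   E^2 \cong H_*(M^{S^1};\Lambda_\Q) \cong H_*(M^{S^1})\otimes_\Q\Q[u,u^{-1}].
$$
The key point (and the step needing the most care) is to argue that the spectral sequence then collapses. This is where the hypothesis $f\equiv 0$ on $M^{S^1}$ enters: all fixed-point components sit at the same filtration level, so any higher differential $d^r$ for $r\geq 2$ would have to arise from a gradient flow line of $\wh f+\wh\A^\mu$ connecting critical points at the same $\wh f$-value but differing by at least one filtration jump, which is impossible because the $\wh f$-component is nonincreasing and must strictly decrease to cross filtration levels. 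Hence $E^2 = E^\infty$, yielding the claimed isomorphism.

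The second assertion follows immediately: if $M^{S^1}=\emptyset$, the first page is already zero over $\Q$, so $\wh H^{S^1}(M,f;\Q)=0$. The main obstacle is the collapse argument, which rests on the careful interpretation of the filtration by $\wh f$ and the observation that perturbations internal to a single fixed component contribute only to $d^2$ via the Morse homology of $M^{S^1}$. \hfill$\square$
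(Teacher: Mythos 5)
Your proposal is correct and follows essentially the same route as the paper: the spectral sequence of the $\wh f$-filtration, vanishing of the critical-circle contributions over $\Q$ by Corollary~\ref{cor:circle}, and collapse because $f\equiv 0$ on the fixed point set so distinct fixed components cannot interact via gradient flow lines. The only cosmetic difference is your page-bookkeeping (the Morse differential of $f|_{M^{S^1}}$ preserves the filtration level, so it is not literally a $d^2$), but the paper's own exposition is equally informal on this point and the substance of the argument is identical.
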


(4) Using the filtration by values of the Rabinowitz action functional
$\wh\A^\mu$ (not of $\wh f+\wh\A^\mu$!), we define filtered Tate
homology groups $\wh H_a^b(M,f)$, $\wh H_a(M,f)$ and $\wh H^b(M,f)$
with the obvious notation as above. Note that for integers $m\leq n$,
multiplication by $u^m$ defines canonical isomorphisms (of degree $2m$)
$$
   \wh H_0^{n-m}(M,f)\cong \wh H_m^n(M,f),\qquad
   \wh H_0(M,f)\cong \wh H_m(M,f).
$$
By definition, $\wh H_0^n(M,f)$ equals the Morse homology of the
manifold $(M\times S^{2n+1})/S^1$. Since the homology functor and the
direct limit functor commute, it follows that 
$$
   \wh H_0(M,f) 
   = \lim_{\substack{\longrightarrow\\ n \to \infty}}\wh H_0^n(M,f) 
   = \lim_{\substack{\longrightarrow\\ n \to \infty}}H\Bigl(M\times
   S^{2n+1})/S^1\Bigr)  
   = H^{S^1}(M)
$$
equals the $S^1$-equivariant homology of $M$. Here
$H^{S^1}(M)=H(M_{S^1})$ is defined via the Borel construction, see
below. So we have shown

\begin{prop}\label{prop:tate-symp}
For every $a\in\R$, the filtered Tate homology $\wh H_a(M,f)$ is
canonically isomorphic to the $S^1$-equivariant homology $H^{S^1}(M)$
with degrees shifted by $2[a]$. \hfill$\square$
\end{prop}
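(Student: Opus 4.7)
The plan is essentially to concatenate the three observations assembled in item (4) of the preceding discussion; most of the content of the proposition is already present there, and only a small amount of bookkeeping remains. I would structure the argument as: (i) treat the case $a=0$, (ii) extend to integer $a=m$ via the $\Z$-action, and (iii) extend from integers to all real $a$ by exploiting the integrality of the critical values of $\A^\mu$.

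For the first step, I would unpack the display immediately preceding the proposition. By construction, $\wh H_0^n(M,f)$ is the non-equivariant Morse homology of the finite-dimensional Borel approximation $(M\times S^{2n+1})/S^1$, which coincides with its singular homology. Taking the direct limit over $n\to\infty$ and interchanging it with the homology functor yields
$$
   \wh H_0(M,f) = \lim_{\substack{\longrightarrow \\ n}}\wh H_0^n(M,f) = \lim_{\substack{\longrightarrow \\ n}}H\bigl((M\times S^{2n+1})/S^1\bigr) = H(M_{S^1}) = H^{S^1}(M).
$$

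For the second step, I would invoke the $\Z$-action on the Tate chain complex recorded in items (1) and (4): because shifting the Fourier index by $m$ shifts $\wh\A^\mu$ by $m$, multiplication by $u^m$ intertwines the filtrations to produce a canonical isomorphism $\wh H_0(M,f)\cong \wh H_m(M,f)$ of degree $2m$. Combined with the first step this handles every integer filtration level.

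For the third step, I would exploit the fact that the critical values of $\A^\mu$ on $\LL_\C^*$ are precisely the integers $\ell\in\Z$. Consequently, for $a$ in the half-open interval $[m,m+1)$ the subcomplex of $\wh C(M,f)$ generated by the critical components of $\wh f+\wh\A^\mu$ on which $\wh\A^\mu < a$ coincides with the one on which $\wh\A^\mu \leq m$ (recall from item~(4) that $\wh f$ does not enter the filtration). Hence $\wh C_a(M,f)=\wh C_m(M,f)$ on the nose, so $\wh H_a(M,f)=\wh H_{[a]}(M,f)\cong H^{S^1}(M)$ with degrees shifted by $2[a]$, as asserted.

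The principal obstacle, already absorbed into item (4), is the identification in the first step: one must verify that the Morse complex of $\wh f+\wh\A^\mu$ restricted to the action window $[0,n]$ really computes the singular homology of $(M\times S^{2n+1})/S^1$. This uses monotonicity of $\wh\A^\mu$ along negative gradient lines (item~(2)), the exclusion of the zero loop from the dynamics \eqref{eq:Fourier}, the normalization retraction $\LL_\C^*\to S^\infty_\infty$, and standard Morse-Bott theory on the resulting compact finite-dimensional approximation. Once this identification is in hand, the remainder of the proof is purely formal.
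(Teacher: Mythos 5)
Your proposal is correct and follows the paper's own argument: the paper proves this proposition precisely by the discussion in item (4) --- identifying $\wh H_0^n(M,f)$ with $H\bigl((M\times S^{2n+1})/S^1\bigr)$, commuting the direct limit with homology, and using multiplication by $u^m$ for the integer shift --- and your step (iii) merely makes explicit the discreteness of the $\A^\mu$-spectrum that the paper leaves implicit. The only caveat is that the critical values of $\A^\mu$ are $\pi\ell$ rather than $\ell$, so the interval in your step (iii) should be $[\pi m,\pi(m+1))$ (up to endpoint conventions), a normalization the paper itself glosses over when writing the shift as $2[a]$.
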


Recall the Borel construction $X_G=X\times_GEG$ for a $G$-space $X$. It fits into the diagram
\begin{equation}
\begin{xy}
\xymatrix{
X\times EG
\ar[d]^{\pi}
\ar[r]
& EG
\ar[d]
\\
X_G   \ar[r]^p &
BG,
}
\end{xy}
\end{equation}
where the vertical maps are principal $G$-bundles and the horizontal
maps are induced by projection onto the second factor. Pullback under
$p$ yields a ring homomorphism $p^*:H^*(BG)\to H^*(X_G)$, which makes
equivariant cohomology a module over $H^*(BG)$ via cup product. The
image $p^*e$ of a class $e\in H^*(BG)$ is the characteristic class of
the bundle $\pi$ induced by $e$, and cap product with these classes
makes equivariant homology $H_*(X_G)$ a module over $H^*(BG)$ as well.  

For $G=S^1$ and $X$ being a manifold, the class $p^*e$ corresponding
to the generator $e\in H^2(BS^1)$ is the Euler class of $\pi$, and cap
product with this class is realized by intersection with the
codimension $2$ submanifold of $X_{S^1}$ given by a complex
codimension $1$ linear subspace of $BS^1=\C P_0^\infty$.   

\begin{cor}\label{cor:tate-symp}
The isomorphism in Proposition~\ref{prop:tate-symp} is compatible with
the module structures over the polynomial ring $\Z[u^{-1}]$, where
$u^{-1}$ acts on $H^{S^1}(M)$ by cap product with the Euler class
$e\in H^2(BS^1)$.
\end{cor}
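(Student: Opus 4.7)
The plan is to verify the compatibility at the level of finite-dimensional Borel approximations and then pass to the direct limit. By the proof of Proposition~\ref{prop:tate-symp}, the identification $\wh H_0(M,f)\cong H^{S^1}(M)$ arises as the direct limit of the Morse-theoretic isomorphisms $\wh H_0^n(M,f)\cong H((M\times S^{2n+1})/S^1)$, and under the canonical shift $u^m:\wh H_0\cong\wh H_m$ it extends to every integer $a=m$. It therefore suffices to match the $u^{-1}$ shift $\wh H_0\to\wh H_{-1}$ with cap product by the Euler class $e\in H^2(BS^1)$.

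At each finite level, the Morse-theoretic representative coming from the Rabinowitz critical point $w^{(k)}$ corresponds to a cycle supported over the complex projective subspace $\C P_0^k\subset\C P_0^n$, realized as the unstable manifold of $[w^{(k)}]$ for the descended action functional $\A^\mu$. The $u^{-1}$-shift, induced on Rabinowitz generators by the $\Z$-action $w^{(k)}\mapsto w^{(k-1)}$ on $\LL_\C$, therefore sends this cycle to the corresponding cycle over $\C P^{k-1}$ in the shifted Borel approximation.

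On the other hand, as recalled just before the corollary statement, cap product with the Euler class on $H^{S^1}(M)$ is realized by transverse intersection with the codimension-two submanifold $(M\times S^{2n-1})/S^1\subset(M\times S^{2n+1})/S^1$ coming from a complex hyperplane $\C P^{n-1}\subset\C P^n$. On Morse-theoretic chains this intersection sends the unstable manifold of $w^{(k)}$ to that of $w^{(k-1)}$, matching the effect of $u^{-1}$ on Rabinowitz generators. Since both operations are natural with respect to the inclusions of finite Borel approximations, the agreement persists in the direct limit and yields the claimed compatibility of $\Z[u^{-1}]$-module structures.

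The main obstacle is the chain-level verification that the Morse-theoretic cap product with the Euler class agrees exactly with the shift on Rabinowitz generators: one must pick a transverse representative of the hyperplane $\C P^{n-1}$ and compute the resulting incidence count against gradient trees of $\wh f+\wh\A^\mu$, showing that the count reproduces the explicit shift $w^{(k)}\mapsto w^{(k-1)}$ with the correct signs. Once this is established at a single finite level, compatibility with direct limits is formal by naturality of Euler classes under the inclusions of Borel approximations.
\hfill $\square$
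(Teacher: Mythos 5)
Your overall strategy is the same as the paper's: realize cap product with the Euler class $e\in H^2(BS^1)$ geometrically as intersection with a complex hyperplane and match the resulting chain-level operation with the shift $w^{(\ell+1)}\mapsto w^{(\ell)}$ induced by $u^{-1}$. However, you assert the key identity in your third paragraph ("this intersection sends the unstable manifold of $w^{(k)}$ to that of $w^{(k-1)}$") and then concede in your last paragraph that verifying it is "the main obstacle," leaving it undone. That verification \emph{is} the content of the corollary, so as written the proof has a genuine gap.

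The paper closes this gap with a short explicit computation that you should supply. Take the hyperplane $H\subset\C P_0^\infty$ cut out by $\sum_{k\ge 0}a_kz_k=0$ for an $\ell^2$-sequence with every $a_k\neq 0$. The space of gradient flow lines of $\A^\mu$ from $(z^{(\ell+1)},\ell+1)$ to $(z^{(\ell)},\ell)$ projects onto the $2$-sphere $\C P_\ell^{\ell+1}=\{z_k=0\text{ for }k\neq\ell,\ell+1\}$ (this uses the explicit Fourier-mode ODEs~\eqref{eq:Fourier}, under which the coordinate subspaces are flow-invariant). The hyperplane $H$ meets this $2$-sphere transversally in the single point $\{a_\ell z_\ell+a_{\ell+1}z_{\ell+1}=0\}$, so the incidence count is $\pm 1$ and the cap product sends $w^{(\ell+1)}$ to $w^{(\ell)}$, exactly as $u^{-1}$ does. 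Note also that the paper works directly in $\C P_0^\infty$ rather than through finite Borel approximations and a limit argument; your reduction to finite stages is harmless but unnecessary once one has the explicit infinite-dimensional model, since the relevant moduli space of flow lines between consecutive critical points is already a finite-dimensional ($2$-sphere) object there.
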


{\bf Proof: }
Define a complex codimension $1$ linear subspace $H$ of $\C P_0^\infty$
by the equation $\sum_{k=0}^\infty a_kz_k=0$ on Fourier coefficients,
for an $\ell^2$ sequence $(a_k)$ with $a_k\neq 0$ for all
$k$. Gradient flow lines of $\A^\mu$ connecting two critical points
$(z^{(\ell+1)},\ell+1)$ and $(z^{(\ell)},\ell)$  project onto the
$2$-sphere $\{z_k=0$ for all $k\neq\ell,\ell+1\}$ in $\C P_0^\infty$,
which intersects $H$ in the single point $\{z_k=0$ for all
$k\neq\ell,\ell+1,\;a_\ell z_\ell+a_{\ell+1}z_{\ell+1}=0\}$. Thus cap
product with the Euler class maps $(z^{(\ell+1)},\ell+1)$ to
$(z^{(\ell)},\ell)$, which is also what the map $u^{-1}$ does.   
\hfill$\square$

\section{Symplectic Tate homology}

In this section we carry over the construction of
Section~\ref{ss:Tate} to define symplectic Tate homology, and we prove
Theorem~\ref{thm:main} from the Introduction. 

\subsection{Definition and basic properties}

Let $(V,\lambda_V)$ be the completion of a Liouville domain
$(W,\lambda)$ satisfying $c_1(W)=0$. Let $H \in
C^\infty(V,\mathbb{R})$ be an autonomous Hamiltonian growing
quadratically at infinity. Denote by $\mathscr{L}_V :=
C^\infty(S^1,V)$ 
the free loop
space of $V$. Since the Hamiltonian is autonomous, the action
functional of classical mechanics 
$$\mathcal{A}_H \colon \mathscr{L}_V \to \mathbb{R}$$
defined by
$$\mathcal{A}_H(v):=\int v^* \lambda_V -\int H(v)dt$$
is invariant under the circle action of $\mathscr{L}_V$ induced by
rotating the domain. We assume in the following that it is Morse--Bott,
which we can always achieve by perturbing $H$ slightly. 

Recall from Section~\ref{ss:Rab} the definition of the Rabinowitz
action functional for the unit circle in $\mathbb{C}$,  
$$ 
   \mathcal{A}^\mu \colon \mathscr{L}_{\mathbb{C}} \times \mathbb{R}
   \to \mathbb{R}, \qquad
   \mathcal{A}^\mu(z,\eta)=\int z^* \lambda_{\mathbb{C}}-\eta\int
   \mu(z)dt.
$$
The circle acts on the product space $\mathscr{L}_{V} \times
\mathscr{L}_{\mathbb{C}}\times \mathbb{R}$ diagonally.
Since both $\mathcal{A}_H$ and $\mathcal{A}^\mu$ are $S^1$-invariant,
the functional 
$$\mathcal{T}_H \colon (\mathscr{L}_{V} \times \mathscr{L}_{\mathbb{C}}\times
   \mathbb{R})/S^1 \to \mathbb{R}$$
given by
$$
\mathcal{T}_H([v,z,\eta])=\mathcal{A}_H(v)+\mathcal{A}^\mu(z,\eta)$$
is well defined. Since $\mathcal{A}_H$ was assumed to be Morse-Bott, 
$\mathcal{T}_H$ is Morse-Bott as well. Choose an auxiliary Morse
function $f$ on the critical manifold of  
$\mathcal{T}_H$. On the set $\mathrm{Crit}(f)$ of critical points of
the auxiliary Morse function $f$ we define a double filtration by
$$
   \mathrm{Crit}_a^b(f):=\Big\{c \in \mathrm{Crit}(f) \mid
   \mathcal{A}^\mu(c) \geq a, \,\, \mathcal{A}_H(c) \leq b\Big\},
   \quad a,b \in \mathbb{R}. 
$$
Since $c_1(W)=0$, elements in $\mathrm{Crit}(f)$ are naturally graded
by the sum of the Conley--Zehnder indices of the periodic orbits for
$\A_H$ and $\A^\mu$ and the Morse index of the critical points of
$f$. (Here the Conley-Zehnder indices of periodic orbits of $\A_H$
depend on a choice of trivializations of $TV$ along base loops in each
free homotopy class.) 
It is important to observe that for all
$a,b \in \mathbb{R}$ the set $\mathrm{Crit}_a^b(f)$ is finite in each fixed degree. We omit the 
reference to the degree in our notation. The {\em symplectic Tate
  chain groups} (with $\Z$-coefficients) are
defined as the free $\Z$-modules generated by
critical points of our auxiliary Morse function, 
$$
   CT_a^b 
   := \mathrm{Crit}_a^b(f) \otimes \Z.
$$
The boundary operator 
$$\partial_a^b \colon CT_a^b \to CT_a^b$$
is defined by counting gradient flow lines with cascades for 
the pair $(\mathcal{T}_H,f)$ as in \cite{bourgeois-oancea1} or \cite[Appendix A]{frauenfelder0}.
This definition depends on the choice of a suitable family of almost
complex structures, which we defer to Section~\ref{ss:trans}. 
We denote its homology groups by
$$HT_a^b := \frac{\ker \partial_a^b}{\mathrm{im} \partial_a^b}.$$
Since both action functionals $\mathcal{A}_H$ and $\mathcal{A}^\mu$
are nonincreasing along gradient flow lines with cascades, the
inclusion and projection maps 
$$
   \iota_a^{b_2,b_1} \colon CT_a^{b_1} \to CT_a^{b_2}, \,\,b_1 \leq
   b_2, \qquad
   \pi_{a_2,a_1}^b \colon CT^b_{a_1} \to CT_{a_2}^b,\,\,a_1 \leq a_2
$$
together with the boundary operators $\p_a^b$ form a bidirect system
of chain complexes as in Section~\ref{sec:alg}. In particular, the
induced maps on homology
$$
   H\iota_a^{b_2,b_1} \colon HT_a^{b_1} \to HT_a^{b_2}, \,\,b_1 \leq
   b_2, \qquad 
   H\pi_{a_2,a_1}^b \colon HT^b_{a_1} \to HT_{a_2}^b,\,\,a_1 \leq a_2
$$
form a bidirect system of graded vector spaces. 
As in Section~\ref{ss:bidirect}, we define four versions of {\em
  symplectic Tate homology} by 
\begin{align*}
\overrightarrow{H}\overleftarrow{T}(W) &:=
H(\underrightarrow{\lim}\underleftarrow{\lim}CT), \qquad
\underrightarrow{H}\underleftarrow{T}(W) :=  \underrightarrow{\lim}
\underleftarrow{\lim}HT\quad \text{(Jones-Petrack version)}, \cr
\overleftarrow{H}\overrightarrow{T}(W) &:= H(\underleftarrow{\lim}\underrightarrow{\lim}CT) , \qquad
\underleftarrow{H}\underrightarrow{T}(W) :=
\underleftarrow{\lim}\underrightarrow{\lim}HT\quad \text{(Goodwillie
  version)}. 
\end{align*}
Thus superscript (resp.~subscript) arrows indicate that the direct/inverse
limits are applied before (resp.~after) taking homology. As the
notation suggests, the symplectic Tate homology groups
are independent of $H$, the auxiliary Morse function $f$, 
as well as the metrics used in order to define gradient flow lines
with cascades. This can be shown by standard continuation arguments in
Morse homology as in \cite{schwarz}. By construction, they are
invariant under Liouville isomorphisms of the completions, hence in
particular under deformation equivalence of the Liouville domains (see
also~\cite{Sei08b}). 

Note that the spaces $C^b_a$ are finitely generated in each degree and
the projections $\pi_{a_2,a_1}^b$ are surjective. Hence
Proposition~\ref{ta} and Corollary~\ref{cor:module} imply  

\begin{thm}\label{thm:maindia}
Let $W$ be a Liouville domain with $c_1(W)=0$. Then there is a 
canonical diagram  
\begin{equation}\label{maindia2}
\begin{xy}
 \xymatrix{
\overrightarrow{H}\overleftarrow{T}(W)
\ar[d]^{Hk}
\ar[r]^{\rho}
& \underrightarrow{H}\underleftarrow{T}(W)
\ar[d]^{\kappa}\\
\overleftarrow{H}\overrightarrow{T}(W)   \ar[r]^{\sigma} &
\underleftarrow{H}\underrightarrow{T}(W)
}
\end{xy}
\end{equation}
of $\Z[u,u^{-1}]$-module maps with $\sigma$ surjective. With
coefficients in a field, the diagram commutes and $\rho$ is an
isomorphism. \hfill$\square$ 
\end{thm}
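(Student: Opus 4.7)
The plan is to apply Proposition~\ref{ta} and Corollary~\ref{cor:module} to the bidirect system of chain complexes $(CT_a^b,\pi^b_{a_2,a_1},\iota_a^{b_2,b_1},\partial_a^b)$ just constructed, so the task reduces to checking the hypotheses of those two results in our geometric setup. First I would record that the quadruple really is a bidirect system in the sense of Section~\ref{ss:bidirect}: the projections $\pi^b_{a_2,a_1}$ come from the filtration by $\mathcal{A}^\mu$, the inclusions $\iota^{b_2,b_1}_a$ from the sub-level filtration by $\mathcal{A}_H$, they commute with each other because they act on independent index directions (giving the square~\eqref{pii} for free), and they commute with $\partial_a^b$ because both $\mathcal{A}_H$ and $\mathcal{A}^\mu$ are nonincreasing along the gradient flow lines with cascades defining the boundary operator.

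Next I would verify the two hypotheses that feed into Proposition~\ref{ta}. The maps $\pi^b_{a_2,a_1}$ are projections onto quotient complexes and hence surjective, so Proposition~\ref{ta}~(a) immediately supplies the diagram~\eqref{maindia2} together with the surjectivity of $\sigma$. For part~(b), the assumption $c_1(W)=0$ produces a well-defined integer grading, and the already-recorded finiteness of $\mathrm{Crit}_a^b(f)$ in each fixed degree translates into $CT_a^b$ being finitely generated in each degree; after tensoring with a field these become graded vector spaces of finite dimension in each degree, which is exactly the input of Proposition~\ref{ta}~(b). That proposition then yields commutativity of~\eqref{maindia2} and that $\rho$ is an isomorphism, whenever the coefficients lie in a field.

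Finally, to promote everything to a statement about $\mathbb{Z}[u,u^{-1}]$-modules I would invoke the $\mathbb{Z}$-shift from Section~\ref{ss:Rab}, given by $n\ast(z,\eta)=(n\ast z,\eta+n)$, which preserves $\mathcal{A}_H$ and shifts $\mathcal{A}^\mu$ by $\pi n$. After passing to the cofinal sub-filtration $a\in\pi\mathbb{Z}$ and reindexing so that the shift moves $a$ by $1$, this descends to chain isomorphisms $u\colon CT_a^b\to CT_{a+1}^b$ commuting with both $\pi^b_{a_2,a_1}$ and $\iota^{b_2,b_1}_a$. This is precisely the additional data required by Corollary~\ref{cor:module}, which then makes each corner of~\eqref{maindia2} into a $\mathbb{Z}[u,u^{-1}]$-module and each arrow a module homomorphism; invariance under Liouville isomorphisms of the completions has already been noted at the level of the bidirect system.

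The only point where actual geometry enters, and therefore the place I regard as the main obstacle, is verifying that $u$ commutes with $\partial_a^b$ on the nose. This requires arranging the almost complex structures and the auxiliary Morse function $f$ on $\mathcal{L}_V\times\mathcal{L}_\mathbb{C}\times\mathbb{R}$ to be $\mathbb{Z}$-equivariant under the shift on the Rabinowitz factor, so that the shift bijects the relevant moduli spaces of gradient flow lines with cascades. This amounts to an equivariant version of the transversality set-up deferred to Section~\ref{ss:trans}; once this is in place, every remaining statement in Theorem~\ref{thm:maindia} is a direct translation of the algebraic machinery of Section~\ref{sec:alg}.
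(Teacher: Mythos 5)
Your proposal is correct and follows exactly the paper's argument: the theorem is deduced from Proposition~\ref{ta} and Corollary~\ref{cor:module} by observing that the projections $\pi^b_{a_2,a_1}$ are surjective and that the $CT_a^b$ are finitely generated in each degree. The one geometric point you flag, the $\Z$-equivariance of the almost complex structures needed so that $u$ commutes with $\partial_a^b$, is precisely what the paper arranges in Section~\ref{ss:trans}.
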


This proves parts (a) and (b) of Theorem~\ref{thm:main}. 

\subsection{Localization properties}

Next we turn to the proof of Theorem~\ref{thm:main} (c) and (d). 
For $a \in \mathbb{R}$ we abbreviate
$$HT_a(W)=HT_a:=\lim_{\substack{\longrightarrow\\ b}}{HT}_a^b.$$
Again, $HT_a$ only depends on $W$ up to Liouville isomorphisms of the
completions. 
The next proposition identifies these groups with the
$S^1$-equivariant symplectic homology $SH^{S^1}(W)$ of
the Liouville domain $W$ as 
defined by Bourgeois and Oancea in \cite{bourgeois-oancea}.

\begin{prop}\label{symp}
For each $a\in \mathbb{R}$ 
we have a canonical isomorphism of $\Z[u^{-1}]$-modules
$$
   HT_a(W)\cong SH^{S^1}(W).
$$
\end{prop}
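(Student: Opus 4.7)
The plan is to adapt the proof of Proposition~\ref{prop:tate-symp} to the infinite-dimensional loop space setting, identifying $HT_a(W)$ with the Bourgeois--Oancea $S^1$-equivariant symplectic homology. First I would reduce to the case $a=0$. The $\Z$-action $u\colon w^{(n)}\mapsto w^{(n+1)}$ raises $\A^\mu$ by $\pi$, preserves $\A_H$, and shifts degree by $+2$; consequently $u$ induces isomorphisms of chain complexes $CT_a^b\to CT_{a+\pi}^b$ compatible with the inclusions in $b$ and the projections in $a$, hence isomorphisms $HT_a(W)\cong HT_{a+\pi}(W)$ after passing to the direct limit in $b$. Because the critical values of $\A^\mu$ form the discrete set $\pi\Z$, the natural projection $HT_a(W)\to HT_{a'}(W)$ is the identity whenever $(a,a']\cap\pi\Z=\emptyset$. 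Together these observations reduce the problem to proving $HT_0(W)\cong SH^{S^1}(W)$.

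At $a=0$, for each $b$ the generators of $CT_0^b$ are Morse critical points of the auxiliary function $f$ on the components $(\gamma\times S^1\cdot w^{(n)})/S^1$ of the $\mathcal{T}_H$-critical manifold, with $\gamma$ a Hamiltonian orbit satisfying $\A_H(\gamma)\leq b$ and $n\geq 0$, together with analogous contributions from constant loops at fixed points. Combined with the cascade differential from~\cite{bourgeois-oancea1} used to define $\p_0^b$, this is the chain model for Bourgeois--Oancea's truncated $S^1$-equivariant Floer complex obtained from the Borel bundle $\LL_V\to (\LL_V\times S^{2N+1})/S^1\to \C P^N$, for any $N$ large enough compared to $b$ and the degree range. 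Once the fibrewise family of almost complex structures and metrics of Section~\ref{ss:trans} has been arranged so that its restriction to the strata $\C P_0^N\subset\C P^\infty_\infty$ matches Bourgeois--Oancea's data, the two chain complexes agree. Passing to the direct limit $b\to\infty$ (and simultaneously letting $N\to\infty$) yields $HT_0(W)\cong SH^{S^1}(W)$, and the $\Z[u^{-1}]$-module structures match by the same Morse-theoretic realization of cap product with the Euler class of $BS^1$ used in Corollary~\ref{cor:tate-symp}.

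The main obstacle lies in the chain-level identification described above. Conceptually both constructions implement an equivariant Floer theory over the same Morse-Bott problem on $\LL_V$, but comparing cascade moduli spaces with the Bourgeois--Oancea Floer differential on each finite-dimensional approximation, and verifying agreement of signs and orientations, requires choosing the family of metrics $g$ of Section~\ref{ss:trans} $\Z$-equivariantly and so that its restriction to $\C P_0^N$ produces the Bourgeois--Oancea data. The inductive construction of $g$ stratum by stratum in that section is designed precisely to make this possible.
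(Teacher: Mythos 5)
Your overall strategy coincides with the paper's: reduce to $a=0$ via the $\Z$-action, truncate the $\A^\mu$-filtration from above to obtain finite-dimensional approximations indexed by $\C P^n$, identify these with the Bourgeois--Oancea approximations, pass to the limit (using that homology commutes with direct limits), and match the $\Z[u^{-1}]$-structures via cap product with the Euler class. However, there is a genuine gap at the central identification step. The truncated Tate complex is \emph{not} built over the sphere $S^{2n+1}$ with the Morse--Bott function $\bar\A(z)=\pi\sum_{k=0}^n k|z_k|^2$, as in Bourgeois--Oancea; it is built from the Rabinowitz action functional
$$
\mathcal{A}^\mu|_{(\mathscr{L}_\C)_0^n}(z,\eta)=\bar\A(z)+\eta\,\bar\mu(z)
$$
on the \emph{non-compact} space $(\mathscr{L}_\C)_0^n\times\R\cong\C^{n+1}\times\R$, carrying the extra Lagrange multiplier variable $\eta$. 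Your assertion that ``once the fibrewise family of almost complex structures and metrics has been arranged so that its restriction to the strata $\C P_0^N$ matches Bourgeois--Oancea's data, the two chain complexes agree'' cannot be literally true, because the two complexes are defined from different functionals on different spaces; no choice of auxiliary data makes them coincide on the nose. What is needed, and what the paper supplies, is the comparison result for Lagrange multiplier functionals from~\cite{frauenfelder}: one homotopes the Morse(-Bott) homology of the Lagrange multiplier functional $\bar\A+\eta\bar\mu$ to the Morse homology of the restriction $\bar\A|_{\bar\mu^{-1}(0)}$ on the constraint sphere, whereupon the Bourgeois--Oancea model appears. (The same result,~\cite[Proposition A.2]{frauenfelder}, is already invoked in Section~\ref{ss:Rab} to relate the indices of $\A^\mu$ to those of its constrained restriction.)

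A secondary, smaller point: your appeal to the equivariant choice of $g$ over the strata $\C P_m^n$ addresses $\Z$-equivariance of the data (needed for the module structure and for the reduction to $a=0$), but it does not bear on the multiplier issue above; the inductive construction of $g$ in Section~\ref{ss:trans} is not ``designed precisely'' to produce the Bourgeois--Oancea data, and the identification of the two theories does not follow from it. With the Lagrange multiplier homotopy inserted, the rest of your argument (stabilization in $n$, commuting homology with the direct limit in $b$, and the Euler class computation as in Corollary~\ref{cor:tate-symp}) goes through as in the paper.
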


\textbf{Proof: }Because of the $\mathbb{Z}$-invariance of the chain
complex it is clear that the $HT_a$ for different choices of $a$ are
canonically isomorphic. It therefore suffices to show that $HT_0$ is
canonically isomorphic to $SH^{S^1}$. We filter the action of
$\mathcal{A}^\mu$ also from above and denote for $n \in \mathbb{N}$
the corresponding vector space by $H^nT_0$. Since the homology functor
and the direct limit functor commute, we have 
$$
   \lim_{\substack{\longrightarrow\\ n \to \infty}}H^nT_0=HT_0.
$$
For a loop $z \in \mathscr{L}_\mathbb{C}$ we consider its Fourier expansion
$$z(t)=\sum_{k=-\infty}^\infty z_k e^{2\pi i k t}, \quad z_k \in \mathbb{C}.$$
Consider the finite dimensional subspace $(\mathscr{L}_\mathbb{C})_0^n
\cong \mathbb{C}^{n+1}$ 
of the loop space $\mathscr{L}_\mathbb{C}$ consisting of all Fourier
series with nonvanishing Fourier coefficients only in the range from
$0$ to $n$. 
The gradient flow equation~\eqref{eq:Fourier} for $\A^\mu$ in terms of
the Fourier coefficients shows that the subspace
$(\mathscr{L}_\mathbb{C})^n_0$ is invariant under the gradient flow of
$\mathcal{A}^\mu$. 
The restriction of $\mathcal{A}^\mu$ to the subspace $(\mathscr{L}_\mathbb{C})^n_0$ is given by
the Lagrange multiplier function
$$\mathcal{A}^\mu|_{(\mathscr{L}_\mathbb{C})^n_0}(z,\eta)=\bar\A(z)+\eta\bar{\mu}(z)$$
where
$$
   \bar\A(z)=\pi\sum_{k=0}^n k|z_k|^2, \qquad
   \bar{\mu}(z)=\pi\Bigl(\sum_{k=0}^n |z_k|^2-1\Bigr).
$$
Note that $\bar{\mu}^{-1}(0)=S^{2n-1}$. Now homotope the Morse homology
of the Lagrange multiplier functional to the Morse homology of the
function $\bar\A$ on the constraint $\bar{\mu}^{-1}(0)$ as in  
\cite{frauenfelder} to recover the definition of $S^1$-equivariant
symplectic homology of Bourgeois and Oancea \cite{bourgeois-oancea,
  bourgeois-oancea2}.
The resulting isomorphism is compatible with the 
$\Z[u^{-1}]$-module structures, where the action of $u^{-1}$ on
$SH^{S^1}(W)$ is induced by the cap product with the Euler class $e\in
H^2(BS^1)$ in the definition by Bourgeois and Oancea, which is modeled
on the Borel construction.  
\hfill $\square$


Now we specialize to coefficients in a field $\fk$. The following
corollary establishes Theorem~\ref{thm:main} (d). 

\begin{cor}\label{local}
For any field $\mathfrak{k}$, the Goodwillie version of Tate homology
$\underleftarrow{H}\underrightarrow{T}(W;\mathfrak{k})$ coincides with
the localization of $S^1$-equivariant symplectic homology of $W$. 
\end{cor}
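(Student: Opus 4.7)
The plan is to analyze the inverse system $\{HT_a(W;\fk)\}_{a\in\R}$ by combining Proposition \ref{symp}, which canonically identifies each $HT_a(W;\fk)$ with $SH^{S^1}(W;\fk)$ as a $\Z[u^{-1}]$-module, with an explicit description of the transition maps $\pi_{a_2,a_1}$, and then to match the resulting tower with the localization construction of Section \ref{loc}.

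First I would pass to a cofinal sequence. Since the critical values of $\A^\mu$ lie in $\pi\Z$, the subsystem indexed by $a_k=-k\pi$ is cofinal as $a\to-\infty$, so
$$
\underleftarrow{H}\underrightarrow{T}(W;\fk)\;=\;\underleftarrow{\lim}_{a\to-\infty}HT_a(W;\fk)\;\cong\;\underleftarrow{\lim}_{k\to\infty}HT_{-k\pi}(W;\fk).
$$
The Rabinowitz $\Z$-action furnishes chain isomorphisms $u\colon CT_a^b\xrightarrow{\cong}CT_{a+\pi}^{b+\pi}$ that shift both filtrations by $\pi$, and these commute with the projections in the sense that $u\circ \pi_{a_2,a_1}=\pi_{a_2+\pi,a_1+\pi}\circ u$. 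Using $u^k\colon HT_{-k\pi}\xrightarrow{\cong}HT_0$ to identify every term of the tower with $HT_0\cong SH^{S^1}(W;\fk)$, a direct computation then shows that the transition $\pi_{-(k-1)\pi,-k\pi}$ is carried, independently of $k$, to the single endomorphism
$$
T\;:=\;u^{-1}\circ\pi_{\pi,0}\colon SH^{S^1}(W;\fk)\longrightarrow SH^{S^1}(W;\fk),
$$
so the inverse system reduces to the constant tower $(V,T)$ with $V:=SH^{S^1}(W;\fk)$.

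The heart of the proof is then the identification of $T$ with the $u^{-1}$-action on $SH^{S^1}(W;\fk)$. This is the symplectic analogue of Corollary \ref{cor:tate-symp}: on the equivariant Rabinowitz Floer complex of $\A^\mu$ alone, the generator $u^\ell$ carries action $\pi\ell$; the projection $\pi_{\pi,0}$ preserves $u^\ell$ for $\ell\ge 1$ and kills $u^0=1$, while the Rabinowitz shift $u^{-1}$ sends $u^\ell\mapsto u^{\ell-1}$, so the composite $T$ is the ``shift-left'' endomorphism of $\Z[u]$. This is precisely how the Bourgeois-Oancea cap-product-with-Euler-class $u^{-1}$-action operates in their Borel model. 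Because the isomorphism $HT_0\cong SH^{S^1}(W;\fk)$ of Proposition \ref{symp} is $\Z[u^{-1}]$-equivariant, the identity $T=u^{-1}$ transports from this purely Rabinowitz-side computation to the full symplectic Tate homology.

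With $T=u^{-1}$ on $V=SH^{S^1}(W;\fk)$, the inverse limit is by definition
$$
\underleftarrow{\lim}_{k\to\infty}V\;=\;\{(v_k)_{k\in\N}\;\bigl|\;u^{-1}v_{k+1}=v_k\}\;=\;V^{u^{-1}},
$$
which is exactly the localization of $SH^{S^1}(W;\fk)$ as constructed in Section \ref{loc}, and the $\Z[u,u^{-1}]$-module structure given by Corollary \ref{cor:module} matches the one on the localization. The step I expect to require the most care is the third: tracking through the Fourier-truncation argument in the proof of Proposition \ref{symp} to verify that, after tensoring with the $W$-factor, the Rabinowitz-side composite $u^{-1}\circ\pi_{\pi,0}$ really intertwines with the Morse-theoretic cap product with $e\in H^2(BS^1)$ used in the Borel model of Bourgeois and Oancea; this is essentially Corollary \ref{cor:tate-symp} transported from the closed-manifold Morse setting to the Lagrange-multiplier setting of symplectic homology, and amounts to checking that the geometric identification of the Euler class with the codimension-two hyperplane in $\C P_0^\infty$ survives the normalization that passes from $\LL_\C^*$ to $S^\infty_\infty$.
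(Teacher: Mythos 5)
Your argument is correct and rests on the same key computation as the paper, but it is packaged differently. The paper works at chain level: it forms the Tate triple $(V,T,\partial)$ with $V=CT_0^\infty$ and $T=p_{-1}\circ u^{-1}$, uses surjectivity of this chain-level $T$ to invoke Corollary~\ref{locmain} (giving $\underleftarrow{\lim}_k H(V_k^T,\partial_k)\cong H(V,\partial)^{HT}$), and then identifies $V_k^T$ with $CT_{-k\pi}^\infty$ via $f_k([(v_i)])=u^{-k}v_k$ so that the left-hand side becomes $\underleftarrow{H}\underrightarrow{T}(W;\fk)$. You instead work directly with the homology tower: your conjugation by $u^k$, which turns every transition map into the single endomorphism induced by $u^{-1}\circ\pi_{\pi,0}=p_{-1}\circ u^{-1}$, is precisely the paper's $f_k$-diagram; but once the tower is constant you read off its inverse limit as the localization straight from the definition~\eqref{eq:VT}, with no need for the chain-level surjectivity of $T$ or for Corollary~\ref{locmain}. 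What the paper's route buys is that the statement slots into the general Tate-triple formalism of Section~\ref{loc} (whose counterexample explains exactly what would fail without surjectivity); what yours buys is brevity. The final step you flag as delicate --- that the transition endomorphism agrees with the $u^{-1}$-action (cap product with the Euler class) on $SH^{S^1}(W;\fk)$ --- is exactly the assertion that the isomorphism of Proposition~\ref{symp} is one of $\Z[u^{-1}]$-modules, so it is already absorbed into that proposition and needs no further verification here.
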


\textbf{Proof: }
Recall that the map $u$ defines isomorphisms $u:CT_a^b\to
CT_{a+\pi}^{b+\pi}$ on the filtered Rabinowitz Floer complex. Consider 
the vector space $V:=CT_0^\infty$ and the linear map
$$
   T:=p_{-1}\circ u^{-1}|_V:V\to V,
$$ 
where $p_{-1}:=\pi_{0,-\pi}^\infty:CT_{-\pi}^\infty\to CT_0^\infty$ is the
canonical projection. Since $T$ commutes with the boundary operator
$\partial:=\partial^\infty_0$, the triple  
$(V,T,\partial)$ is a Tate triple as in Definition~\ref{tatetriple}. 
Since $T:V\to V$ is surjective, Corollary~\ref{locmain} yields the
isomorphism of $\mathfrak{k}[u,u^{-1}]$-modules
\begin{equation}\label{eq:loc}
   \lim_{\substack{\longleftarrow\\ k \to \infty}}H(V_k^T,\partial_k)\cong H(V,\partial)^{HT}.
\end{equation}
In view of Proposition~\ref{symp} and $H(V,\partial)=HT_0(W;\mathfrak{k})$, the
right hand side of~\eqref{eq:loc} is the localization of $S^1$-equivariant symplectic
homology of $W$. To understand the left hand side, recall from
Section~\ref{loc} that $V_k^T$ is the quotient of the space of
sequences $(v_i)_{i\in\N}$ of $v_i\in V$ with $Tv_{i+1}=v_i$ by those
sequences with $v_k=0$. We claim that we have a commuting diagram
\begin{equation*}
\begin{xy}
\xymatrix{
V_k^T \ar[d]^{f_k}_\cong \ar[r]^{\pi_k} 
& V_{k-1}^T \ar[d]^{f_{k-1}}_\cong \\
CT_{-k\pi}^\infty \ar[r]^{p_{-k}} 
& CT_{-(k-1)\pi}^\infty. \\
}
\end{xy}
\end{equation*}
Here the horizontal maps are the canonical projections and the
vertical map $f_k$ sends $[(v_i)]\in V_k^T$ to 
$u^{-k}v_k\in CT_{-k\pi}^\infty$. For injectivity of $f_k$, note that
$0=u^{-k}v_k\in CT_{-k\pi}^\infty$ implies $0=v_k\in CT_0^\infty$ and
thus $0=[(v_i)]\in V_k^T$. For surjectivity, a preimage of $x\in
CT_{-k\pi}^\infty$ is obtained by setting $v_k:=u^kx\in CT_0^\infty$ and
extending it to $[(v_i)]\in V_k^T$ using surjectivity of $T$. 
Commutativity of the diagram follows from 
\begin{align*}
   f_{k-1}\pi_k[(v_i)] &= u^{-(k-1)}v_{k-1} = u^{-(k-1)}Tv_k =
   u^{-(k-1)}p_{-1}u^{-1}v_k \cr
   &= p_{-k}u^{-(k-1)}u^{-1}v_k = 
   p_{-k}f_k[(v_i)].  
\end{align*}
In view of the commuting diagram, the left hand side of~\eqref{eq:loc}
becomes
$$
   \lim_{\substack{\longleftarrow\\ k \to \infty}}H(V_k^T,\partial_k)
   \cong \lim_{\substack{\longleftarrow\\ a \to
       -\infty}}H(CT_a^\infty,\partial_a^\infty) 
   \cong \lim_{\substack{\longleftarrow\\ a \to
       -\infty}}\lim_{\substack{\longrightarrow\\ b \to \infty}}HT_a^b 
   \cong \underleftarrow{H}\underrightarrow{T}(W;\mathfrak{k}), 
$$
where the second isomorphism comes from the fact that the direct limit
commutes with the homology functor. This proves the corollary. 
\hfill $\square$
\medskip

Finally, we prove that with rational coefficients
$\overrightarrow{H}\overleftarrow{T}$ has the fixed point property
\cite{cencelj, greenlees-may, jones-petrack}, which in view of the
isomorphism $\overrightarrow{H}\overleftarrow{T}\cong
\underrightarrow{H}\underleftarrow{T}$ in Theorem~\ref{thm:maindia}
establishes Theorem~\ref{thm:main} (c). 

\begin{prop}\label{fix}
For any Liouville domain $W$ satisfying $c_1(W)=0$, the Jones-Petrack
version of symplectic Tate homology has the fixed point property
$$
   \overrightarrow{H}\overleftarrow{T}(W;\mathbb{Q}) \cong
   H(W,\partial W;\mathbb{Q}) \otimes_\Q \mathbb{Q}[u,u^{-1}].
$$
\end{prop}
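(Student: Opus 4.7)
My plan is to invoke the rational isomorphism $\rho$ of Theorem~\ref{thm:maindia}(b), which is available here since $c_1(W)=0$ provides integer gradings and each $CT_a^b$ is finitely generated in every degree. This reduces the claim to computing $\underrightarrow{H}\underleftarrow{T}(W;\Q)=\underrightarrow{\lim}_b\underleftarrow{\lim}_a HT_a^b$. I will do so via a Morse-Bott spectral sequence for $HT_a^b$ associated with the filtration of $CT_a^b$ by sublevel sets of $\mathcal{A}_H$. Since $\mathcal{T}_H=\mathcal{A}_H+\mathcal{A}^\mu$ is Morse-Bott with critical components of the form $(\gamma\times S^1\cdot w^{(\ell)})/S^1$, where $\gamma$ runs over critical components of $\mathcal{A}_H$ on $\mathscr{L}_V$ and $\ell\in\Z$, the $E^1$-page decomposes as a sum over $\gamma$ of local Tate homologies, in direct analogy with the construction of Section~\ref{ss:Tate}.

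At each such $\gamma$, the local Tate homology is computed by Lemma~\ref{lem:circle} and Corollary~\ref{cor:circle}. When $\gamma$ is a non-constant critical orbit of $\mathcal{A}_H$, it has finite cyclic stabilizer $\Z_n$ for some $n\geq 1$, and its local Tate complex is a complete resolution of $\Z_n$, so its rational Tate homology vanishes. Hence over $\Q$ only fixed points of the $S^1$-action on $\mathscr{L}_V$ contribute at $E^1$, namely the constant loops at critical points of $H$. At a constant loop $v\equiv p$, the local contribution to $HT_a^b$ is the equivariant Rabinowitz Floer homology of $\mathcal{A}^\mu$ truncated by $\mathcal{A}^\mu\geq a$, shifted by the Conley-Zehnder index $n-\mathrm{ind}(p)$. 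Choosing $H$ with all critical points in the interior of $W$, the inverse limit $\underleftarrow{\lim}_a$ assembles these truncated pieces into a free rank-one $\Q[u,u^{-1}]$-module at each constant loop (in each fixed degree exactly one Fourier generator $p\otimes u^\ell$ appears and is eventually present for all $a$ sufficiently negative), and the direct limit $\underrightarrow{\lim}_b$ exhausts $\mathrm{Crit}(H|_W)$. The $E^2$-differential is the Morse differential of $H$ on $W$ tensored with the identity on $\Q[u,u^{-1}]$, and higher differentials vanish by parity (each $p$ produces generators in a single parity class of Conley-Zehnder degrees). Hence $E^\infty\cong H^*(W;\Q)\otimes_\Q\Q[u,u^{-1}]$, and Poincar\'e-Lefschetz duality $H^{n-*}(W;\Q)\cong H_{*+n}(W,\partial W;\Q)$ converts this to the statement of the proposition.

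The main obstacle is to verify rigorously that the local vanishing of rational Tate homology at non-constant orbits, established in the closed-manifold setting of Section~\ref{ss:Tate}, transfers to our Floer-theoretic context. Concretely, one must show that the local cascade complex built from a critical orbit $\gamma$ of covering number $n$ coupled with the Rabinowitz complex of $\mathcal{A}^\mu$ still realizes a complete $\Z_n$-resolution; this should follow from the same surjectivity of negative gradient trajectories of $\mathcal{A}^\mu$ onto the projective strata $\C P_m^n\subset\C P^\infty_\infty$ exploited there, together with the good/bad orbit dichotomy becoming moot over $\Q$. A secondary technical point is the compatibility of the Morse-Bott spectral sequence with the nested direct and inverse limits of the Jones-Petrack construction; this should be handled by Proposition~\ref{ta}(b) together with a degree-wise analysis using the finiteness of $\mathrm{Crit}_a^b(f)$ in each fixed degree.
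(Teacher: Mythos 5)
Your overall strategy coincides with the paper's: filter by $\mathcal{A}_H$, kill the nonconstant orbits rationally via their local Tate homology, and let the constants produce $H(W,\partial W;\Q)\otimes_\Q\Q[u,u^{-1}]$. But two steps as written would not go through. First, your opening move is backwards: the proposition is stated for $\overrightarrow{H}\overleftarrow{T}$, and the paper runs the spectral sequence directly on the chain-level limit complex $\underrightarrow{\lim}\underleftarrow{\lim}CT$, where the $\mathcal{A}_H$-filtration is bounded below \emph{and exhaustive} (the paper explicitly notes exhaustiveness fails for the other order of limits), so convergence is immediate. By instead passing through $\rho$ to $\underrightarrow{H}\underleftarrow{T}$ and running a spectral sequence on each $HT_a^b$ separately, you create exactly the limit-commutation problem you then have to defer to a "secondary technical point"; this detour buys nothing. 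Second, your degeneration argument is wrong: different critical points $p$ of $H$ have Conley--Zehnder degrees of both parities (indeed the Morse differential of $H$ connects them), so "each $p$ produces generators in a single parity class" does not kill higher differentials. The paper sidesteps both the Morse differential and all higher differentials by taking $H\equiv 0$ on $W$, so that the \emph{entire} domain $W$ is one Morse--Bott critical component sitting at the single action level $\mathcal{A}_H=0$; the relative group $H(W,\partial W)$ then arises not from Poincar\'e--Lefschetz duality but from choosing the auxiliary Morse function $f$ on $\mathrm{Crit}(\mathcal{A}_H)$ to point outward along $\partial W$.

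The obstacle you flag as the main one — transferring the local vanishing at nonconstant orbits from the topological setting of Lemma~\ref{lem:circle} to the Floer setting — is indeed the substantive point, and it is the content of the paper's Lemma~\ref{lem:circle2}. Be aware that it is not settled by "the good/bad dichotomy becoming moot over $\Q$": one must actually pin down the boundary coefficients (up to sign, $(0,n)$ or $(2,0)$) determined by the coherent orientations of~\cite{FloHof93}, since other coefficient patterns (e.g.\ all zero) would leave nontrivial rational homology. The paper deduces the correct local system on the lens space $L_n$ from the vanishing criterion for local non-equivariant symplectic homology of bad orbits and the Gysin sequence relating it to the equivariant version; and for $n=1$ the vanishing is not a torsion phenomenon at all but the vanishing of Rabinowitz Floer homology of the circle. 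Without this input your $E^1$-page computation is unjustified.
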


\textbf{Proof: }
We consider the spectral sequence associated to the increasing filtration 
$\FF^b:=\{\mathcal{A}_H\leq b\}$ on the chain complex
$\underrightarrow{\lim}\underleftarrow{\lim}CT$, see \cite[Chapter
  5.4]{weibel}. Since $H$ grows quadratically at infinity, the
filtration is bounded from below. Moreover, it is exhaustive in the
sense that $\bigcup_{b\geq
  0}\FF^b=\underrightarrow{\lim}\underleftarrow{\lim}CT$. (Note that
this fails for the chain complex
$\underleftarrow{\lim}\underrightarrow{\lim}CT$.) Therefore, the 
spectral sequence converges to $\overrightarrow{H}\overleftarrow{T}(W;\mathbb{Q})$. 

The spectral sequence depends on the choice of our Hamiltonian. We
explain next how to choose the Hamiltonian $H$ on the completion $V$
of the Liouville domain $W$ so that the result can easily be read off
from the spectral sequence.  
After a small perturbation of $W$ we may assume without loss of generality that the Reeb flow
on $\partial W$ is nondegenerate. Now choose a smooth function $\beta
\in C^\infty([0, \infty), [0,\infty))$ satisfying 
$$\beta(r)\left\{\begin{array}{cc}
=0 & r \leq 1\\
>0 & r>1\\
=r & r \geq 2.
\end{array}\right.$$
Define $h \in C^\infty([0, \infty), [0,\infty))$ by the requirement
$$h(0)=0, \qquad h'(0)=0, \qquad h''(r)=\frac{\beta(r)}{r}\text{
      for }r>0.$$
Note that $h$ grows quadratically at infinity. Moreover, since $h$ is
strictly convex for $r >1$, it satisfies 
\begin{equation}\label{actest}
h'(r) r-h(r) > 0\qquad\text{ for all }r>1.
\end{equation}
If $(\partial W \times (0,\infty), d(r \lambda|_{\partial W}))$ is embedded as the symplectization of the Liouville domain $W$ in its completion $V$ we define the Hamiltonian $H \in C^\infty(V,\mathbb{R})$ as
$$H(v)=\left\{\begin{array}{cc}
h(r) & v=(x,r) \in \partial W \times (0,\infty),\\
0 & v \in W.
\end{array}\right.$$
The critical set of the action functional $\mathcal{A}_H$ associated to $H$ can be identified with $W$ corresponding to the constant loops and a disjoint union of circles, one for each periodic Reeb orbit
of the Reeb flow on $\partial W$. In particular, the action functional $\mathcal{A}_H$ is not quite Morse-Bott in the orthodox sense, since the critical manifold $W$ has a boundary. However, note
that the action of $\mathcal{A}_H$ vanishes on $W$ and is positive on
the nonconstant periodic Reeb orbits
in view of (\ref{actest}). In view of this fact, we can define Morse-Bott homology of $\mathcal{A}_H$ as for an honest Morse-Bott functional treated in \cite{frauenfelder-schlenk}.  Namely, choose a Morse function $f$ on
$\mathrm{Crit}(\mathcal{A}_H)$ with the property that $\nabla f$ points outward at the
boundary of $W$ for one (and hence every) Riemannian metric on
$\mathrm{Crit}(\mathcal{A}_H)$. Since the action is decreasing along
gradient flow lines, no flow line with cascades can contain a Floer
cylinder starting or ending on a constant in the boundary of $W$, so
the boundary operator squares to zero.  

The Liouville domain $W$ corresponds precisely to the fixed point set of the $S^1$-action on $\mathrm{Crit}(\mathcal{A}_H)$. We can assume without loss of generality that $W$ is connected (otherwise we look
at the symplectic Tate homologies of each connected component of
$W$). Therefore, its local Tate homology (the contribution to the
first page of the spectral sequence) with rational coefficients is 
$H(W,\partial W;\mathbb{Q})  \otimes_\Q \mathbb{Q}[u,u^{-1}]$. The reason we have to
take homology relative to the boundary of $W$ is that the auxiliary Morse function points outward at the boundary.

On the other hand, by Lemma~\ref{lem:circle2} below, the contribution of
each nonconstant periodic orbit $\gamma$ of the Hamiltonian vector
field $X_H$ to the first page of the spectral sequence is torsion and
thus vanishes with $\Q$-coefficients. 
This finishes the proof of Proposition~\ref{fix}. \hfill $\square$

In the proof we have used the following analogue of
Corollary~\ref{cor:circle}, where as in~\cite{EGH} a Hamiltonian orbit
$\gamma$ is called {\em bad} if it is an even multiple of a simple orbit
whose linearized return map has an odd number of eigenvalues in the
interval $(-1,0)$, and {\em good} otherwise. 

\begin{lemma}\label{lem:circle2}
The local symplectic Tate homology (i.e., the contribution to the homology of the
first page of the spectral sequence above) of a nonconstant periodic
orbit $\gamma$ of the Hamiltonian vector field $X_H$ of covering
number $n$ equals (with $\Z$-coefficients ) 
\begin{itemize}
\item $\{0\}$ if $n=1$, 
\item $\Z_n[u,u^{-1}]$ shifted by $|\gamma|+1$ if $n\geq 2$ and
$\gamma$ is good, 
\item $\Z_2[u,u^{-1}]$ shifted by $|\gamma|$ if $\gamma$ is bad. 
\end{itemize}
\end{lemma}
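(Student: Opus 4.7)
The strategy is to adapt the proof of Lemma~\ref{lem:circle} to the symplectic setting, replacing the topological unstable bundle $E^-$ along a critical circle by the orientation local system of the linearized Floer operator at $\gamma$. First, I would localize the computation to a Morse--Bott neighborhood of the critical orbit. In the quotient $(\LL_V\times\LL_{\C}^*\times\R)/S^1$, the critical components arising from $\gamma$ paired with the Rabinowitz critical set $\{w^{(\ell)}\}_{\ell\in\Z}$ form a $\Z$-family of doubly infinite lens spaces $L_n=S^\infty_\infty/\Z_n$, since the $S^1$-stabilizer of $\gamma$ is exactly $\Z_n$. Consecutive components differ in grading by $2$, coming from the shift by $\pi$ in $\A^\mu$.

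Second, perturbing by a Morse function with two critical points on each lens-space fibre yields generators $w_\pm^\ell$ for $\ell\in\Z$, whose absolute degrees, after absorbing the Conley--Zehnder index of $\gamma$, are $|\gamma|+2\ell$ and $|\gamma|+2\ell+1$. The first-page local homology is then the Morse homology of $L_n$ with local coefficients in the pullback of the orientation line $\mathcal{O}_\gamma$ of the asymptotic Floer operator at $\gamma$. Once this setup is fixed, the rest of the computation is a verbatim copy of Lemma~\ref{lem:circle}: vanishing of non-equivariant Rabinowitz Floer homology on $\C$ from \cite{cieliebak-frauenfelder} handles $n=1$; for $n\geq 2$ the cellular homology of $L_n$ with these coefficients gives $\Z_n$ in odd degrees in the good case or $\Z_2$ in even degrees in the bad case, which assemble under the $\Z$-action into $\Z_n[u,u^{-1}]$ or $\Z_2[u,u^{-1}]$ with the stated shifts.

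The key point to establish is that the EGH good/bad dichotomy coincides with orientability of this pullback line bundle over $L_n$. Concretely, the generator of $\pi_1(L_n)=\Z_n$ is represented by a loop that lifts to the $1/n$-rotation of the $S^1$-action on $\gamma$, and its holonomy on $\mathcal{O}_\gamma$ is $(-1)^{\sigma}$ where $\sigma$ counts negative real eigenvalues of the linearized return map of the underlying simple orbit lying in $(-1,0)$; this is nontrivial iff $n$ is even and $\sigma$ is odd, which is precisely the definition of a bad orbit in \cite{EGH}. This orientation identification, which involves coherent Floer orientations in the presence of the Rabinowitz Lagrange multiplier $\eta$, is the main technical obstacle; once it is in place, the computation reduces to the topological case already treated.
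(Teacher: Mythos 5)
Your setup agrees with the paper's: the local contribution is the Morse homology of the doubly infinite lens space $L_n=S^\infty_\infty/\Z_n$ with coefficients in a rank-one local system determined by the Floer-theoretic orientations, with generators $w_\pm^\ell$ and the same gradient trajectories as in Lemma~\ref{lem:circle}, so that everything reduces to deciding whether that local system is trivial. The gap is that you never actually decide it: you name the identification ``good/bad $\Leftrightarrow$ (non)orientability of $\mathcal{O}_\gamma$ over the generator of $\pi_1(L_n)$'' as the key point, assert a holonomy formula, and then declare it ``the main technical obstacle'' without proving it. That obstacle \emph{is} the content of the lemma; everything else is already in Lemma~\ref{lem:circle}. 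Moreover, the formula you assert, holonomy $=(-1)^{\sigma}$ with $\sigma$ the number of eigenvalues of the return map of the \emph{simple} orbit in $(-1,0)$, cannot be literally correct: the holonomy is a homomorphism $\Z_n\to\{\pm 1\}$, hence forced to be trivial for $n$ odd, whereas $(-1)^{\sigma}=-1$ whenever $\sigma$ is odd. So the statement would need the correct $n$-dependence, and its proof would require a genuine analysis of the coherent orientations of~\cite{FloHof93} in the presence of the Lagrange multiplier, which you do not supply.

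It is worth noting that the paper deliberately avoids this analysis. Since for $n$ even there is a \emph{unique} nontrivial homomorphism $\Z_n\to\Z_2$, there are only two candidate local systems, giving exactly the two answers $\Z_n[u,u^{-1}]$ (odd degrees) or $\Z_2[u,u^{-1}]$ (even degrees). Which one occurs is then read off indirectly from two known facts: (1) the local non-equivariant symplectic homology $SH(\gamma;\Q)$ vanishes iff $\gamma$ is bad (\cite[Lemma 4.28]{bourgeois-oancea1}), and (2) $SH(\gamma;\Q)=0$ iff $SH^{S^1}(\gamma;\Q)=0$, by the Gysin sequence and the fact that both groups live in nonnegative degrees. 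Comparing with the two complexes of Lemma~\ref{lem:circle} truncated to nonnegative degrees pins down the local system and hence the sign pattern. If you want to complete your more direct route, you must either prove the corrected holonomy statement from the coherent orientation construction, or substitute an indirect determination of the local system along the lines above.
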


{\bf Proof: }
The local symplectic Tate homology of $\gamma$ is computed by a chain 
complex with the same generators $w^\ell_\pm$, $\ell\in\Z$, and the
same gradient flow lines between them as in the proof of
Lemma~\ref{lem:circle}. However, the signs with which the gradient
flow lines contribute to the boundary operator are now determined by
the coherent orientations from~\cite{FloHof93}. Rather than analyzing
the coherent orientations, we can deduce the signs from
Lemma~\ref{lem:circle} and the following two facts:

(1) The local (non-equivariant) symplectic homology $SH(\gamma;\Q)$
  of $\gamma$ with rational coefficients vanishes if and only if
  $\gamma$ is bad; this follows from~\cite[Lemma
    4.28]{bourgeois-oancea1}.   

(2) The local non-equivariant symplectic homology $SH(\gamma;\Q)$
  of $\gamma$ with rational coefficients vanishes if and only if the
  local equivariant symplectic homology $SH^{S^1}(\gamma;\Q)$
  vanishes; this follows from the Gysin exact sequence
  in~\cite{bourgeois-oancea} and the fact that both groups live in
  nonnegative degrees. 

Now the local symplectic Tate homology of $\gamma$ is the homology of
the lens space $L_n=(S^\infty_\infty)/\Z_n$ with some local
system. For $n$ odd (so $\gamma$ is good) there is no nontrivial group
homomorphism $\Z_n\to\Z_2$, so the local system is trivial and the
homology is of the first (for $n=1$) or second type in the statement
of Lemma~\ref{lem:circle2}. Suppose now that $n$ is even, so there exists a unique
nontrivial group homomorphism $\Z_n\to\Z_2$ and thus a unique
nontrivial local system on $L_n$. 
By facts (1) and (2), $\gamma$ is bad if and only if the local
equivariant symplectic homology $SH^{S^1}(\gamma;\Q)$ with rational
coefficients vanishes. This is the case if and only if the complex
agrees with the second complex in Lemma~\ref{lem:circle} (set to zero
in negative degrees, so the first complex has rational homology in
degree $0$), so the local system on $L_n$ is nontrivial and the local
symplectic Tate homology is of the third type in the statement of
Lemma~\ref{lem:circle2}. If $\gamma$ is good, then the the local
system on $L_n$ is trivial and the local symplectic Tate homology is
of the second type in the statement of Lemma~\ref{lem:circle2}.  
\hfill $\square$

\begin{rem}
The construction of symplectic Tate homology shows: Whenever the map
$\kappa$ has nontrivial kernel, then there exist infinite chains of
periodic orbits connected by Floer cylinders with $\A_H$ going to
infinity. We will see examples of this phenomenon in
Section~\ref{ex}. 
\end{rem}

\subsection{Backwards symplectic homology}

For $a<0$, let us denote by $CB_a^b\subset CT_a^b$ the subcomplex
defined by $\A^\mu\leq 0$. We call $HB(W) :=
\overrightarrow{H}\overleftarrow{B}(W)$ the {\em 
  backwards $S^1$-equivariant symplectic homology} of $W$. We get a
short exact sequence of 
bidirect systems of chain complexes $0\to CB_a^b\to CT_a^b\to CT_0^b\to
0$ (defined for $a<0$ and with one filtration trivial on
$CT_0^b$). According to Remark~\ref{rem:short-exact}, with
coefficients in a field $\fk$ we obtain long
exact sequences fitting in the following commuting diagram with the
map $\rho$ from~\eqref{maindia2}: 
\begin{equation}\label{eq:backwards}
\begin{xy}
\xymatrix{
\cdots \ar[r] &
\overrightarrow{H}\overleftarrow{B}(W;\fk)
\ar[d]^{\rho}_\cong \ar[r] & 
\overrightarrow{H}\overleftarrow{T}(W;\fk)
\ar[d]^{\rho}_\cong \ar[r] & 
SH^{S^1}(W;\fk)
\ar[d]^{\rho}_\cong \ar[r] & \cdots
\\
\cdots \ar[r] &
\underrightarrow{H}\underleftarrow{B}(W;\fk)
\ar[r] &
\underrightarrow{H}\underleftarrow{T}(W;\fk)
\ar[r] &
SH^{S^1}(W;\fk)
\ar[r] & \cdots
}
\end{xy}
\end{equation}
The term ``backwards homology'' is taken from~\cite{tene}, where an
exact sequence analogous to~\eqref{eq:backwards} is derived in a
different context.    

\subsection{Notes on transversality}\label{ss:trans}

The definition of Tate homology of a smooth manifold (resp.~symplectic
Tate homology of a Liouville domain) $M$ depends on the choice of a
generic family of metrics (resp.~compatible almost complex structures)
on the fibres of the fibration $M\to (M\times S^\infty_\infty)/S^1\to\C
P^\infty_\infty$ which is invariant under the $\Z$-action. Here we explain
the construction of such a family of almost complex structures, the
case of a family of metrics being similar but simpler. 

Recall from Section~\ref{ss:Rab} the Hilbert space $\LL_\C$ of Fourier
series, equipped with the $W^{1,2}$-norm and the $\Z$-action
$(n*z)_k=z_{k-n}$. 

\begin{lemma}
The $\Z$-action on $\LL_\C^*=\LL_\C\setminus\{0\}$ is free and proper,
so the quotient by this action is a smooth Hilbert manifold.  
\end{lemma}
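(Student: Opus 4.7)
The plan is to verify the two conditions separately and then invoke the standard principle that a free proper action of a discrete group on a Hilbert manifold admits a Hilbert manifold quotient.

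\emph{Freeness.} Suppose $n * z = z$ for some $n \in \Z$ and $z \in \LL_\C^*$. In Fourier coefficients this reads $z_{k-n} = z_k$ for all $k \in \Z$. If $n \neq 0$, iterating gives $z_k = z_{k + jn}$ for every $j \in \Z$, so each nonzero Fourier coefficient repeats along an arithmetic progression. But $z \in \LL_\C$ requires $\sum_k (1+k^2)|z_k|^2 < \infty$, which forces $|z_k| \to 0$ as $|k| \to \infty$. Therefore $z_k = 0$ for all $k$, contradicting $z \in \LL_\C^*$. Hence only $n = 0$ fixes any point.

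\emph{Properness.} I plan to show that the action map $\Z \times \LL_\C^* \to \LL_\C^* \times \LL_\C^*$ sending $(n,z)$ to $(z, n*z)$ is proper by a sequential argument. Let $z^{(j)} \to z$ in $\LL_\C^*$ and $n_j * z^{(j)} \to w$ in $\LL_\C^*$. Since $z \neq 0$, there exists some index $k_0$ with $z_{k_0} \neq 0$, and by $W^{1,2}$-convergence we may assume $|z^{(j)}_{k_0}| \geq c > 0$ for all large $j$. On the other hand $n_j * z^{(j)}$ has uniformly bounded $W^{1,2}$-norm, say by $M$, and its $(k_0+n_j)$-th Fourier coefficient equals $z^{(j)}_{k_0}$. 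The single-term estimate
\[
(1 + (k_0 + n_j)^2)\,|z^{(j)}_{k_0}|^2 \;\leq\; \|n_j * z^{(j)}\|_{W^{1,2}}^2 \;\leq\; M^2
\]
forces $|k_0 + n_j| \leq M/c$, hence $\{n_j\}$ is bounded in $\Z$ and therefore admits a constant subsequence. This shows that preimages of compact sets are compact, i.e.\ the action is proper.

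\emph{Quotient structure.} Once the action is free and proper (equivalently, properly discontinuous, since $\Z$ is discrete), the quotient $\LL_\C^*/\Z$ inherits the structure of a smooth Hilbert manifold in the standard way: around every point $[z]$ one chooses a small $W^{1,2}$-open neighbourhood $U$ of $z$ such that $n*U \cap U = \emptyset$ for all $n \neq 0$ (possible by properness and freeness), and the projection $U \to U/\Z$ gives a chart. The smooth Hilbert manifold structure of $\LL_\C^*$ as an open subset of $\LL_\C$ then descends to these charts and the transition maps are translations by integers in Fourier indices, which are smooth isometries of $\LL_\C$.

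\emph{Expected obstacle.} The only nontrivial step is properness; freeness and the passage to the quotient are essentially formal. The key technical point is that the $W^{1,2}$-norm weights Fourier modes by $1 + k^2$, so shifting the Fourier spectrum far away from the origin inflates the norm. Thus properness truly uses the choice of the $W^{1,2}$ topology (it would fail for the $L^2$ topology, where the shift is an isometry).
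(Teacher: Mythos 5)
Your proof is correct, and your properness argument takes a genuinely different route from the paper's. The paper verifies the Bourbaki-style condition directly: for given $x,y\in\LL_\C^*$ it takes $\eps=|y_\ell|/3$ for some $\ell$ with $y_\ell\neq 0$, works with $L^2$-balls $U_x,U_y$, and uses the fact that only finitely many Fourier coefficients of $x$ exceed $\eps$ (a consequence of mere $\ell^2$-summability) to conclude $|\ell-n|\leq K$ whenever $n*U_x$ meets $U_y$. You instead prove properness of the map $\Z\times\LL_\C^*\to\LL_\C^*\times\LL_\C^*$ sequentially, exploiting the weight $1+k^2$ in the $W^{1,2}$-norm: shifting a Fourier coefficient of definite size $c$ to position $k_0+n_j$ costs at least $(1+(k_0+n_j)^2)c^2$ in norm, which bounds $n_j$. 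Both arguments are complete; yours is arguably slicker since it uses a single coefficient of the limit $z$ rather than the tail behaviour of $x$, while the paper's has the advantage of revealing that only the $L^2$-structure is needed. That last point exposes the one inaccuracy in your write-up: your closing remark that properness ``would fail for the $L^2$ topology'' is false --- the paper's argument establishes properness of the shift action on $L^2(S^1,\C)\setminus\{0\}$ verbatim, even though the shift is an $L^2$-isometry (what matters is that a fixed nonzero element has only finitely many coefficients above any threshold). This is only a side comment and does not affect the validity of your proof. The freeness argument and the passage to the quotient coincide with the paper's.
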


\begin{proof}
Freeness holds because an element with $n*z=z$ for some $n\neq 0$ can
only be in $W^{1,2}$ if it is zero. Recall that the $\Z$-action is
proper if and only if for all $x,y\in\LL_\C^*$ there exist open
neighbourhoods $U_x,U_y$ such that $(n*U_x)\cap U_y=\emptyset$ for all
but finitely many $n\in\Z$. To show this, consider
$x,y\in\LL_\C^*$. Pick $\ell\in\Z$ with $y_\ell\neq 0$ and let
$U_x,U_y$ be the open $\eps$-balls with respect to the $L^2$-norm,
where $\eps:=|y_\ell|/3$. Suppose that $z\in U_x$ and $n*z\in
U_y$. Then $|z_{\ell-n}-y_\ell|\leq \|n*z-y\|<\eps$, and thus
$|z_{\ell-n}|\geq 2\eps$. On the other hand, there exists $K\in\N$
such that $|x_k|<\eps$ for all $k\in\Z$ with $|k|>K$. It follows that
$|z_k|<2\eps$ for all $k\in\Z$ with $|k|>K$, which together with the
previous estimate yields $|\ell-n|\leq K$, which is only satisfied for
finitely many $n\in\Z$. The proof in~\cite{abraham-marsden} that
the quotient of a manifold by a proper free action is again a
manifold carries over readily to Hilbert manifolds.
\end{proof}

Since the free $S^1$-action on $\LL_\C^*$ commutes with the
$\Z$-action, we obtain a principal circle bundle
$$
   S^1\to \LL_\C^*/\Z \to \LL_\C^*/(S^1\times\Z). 
$$
Given a completed Liouville domain $(V,\lambda_V)$, let
$\JJ(V,\lambda_V)$ be the space of almost complex structures on $V$
compatible with $d\lambda_V$ that are cylindrical over the collar
$[1,\infty)\times\p V$. Let 
$$
   \JJ := C^\infty\Bigl(\LL_\C^*/\Z,\JJ(V,\lambda_V)\Bigr)
$$
be the space of almost complex structures in $\JJ(V,\lambda_V)$
parametrized by $\LL_\C^*/\Z$. An element $J\in\JJ$ defines by
integration of $d\lambda(\cdot,J\cdot)$ an
$L^2$-inner product on the vertical tangent bundle of the fibre bundle
$$
   \LL_V \to (\LL_V\times\LL_\C^*)/(S^1\times\Z) \to
   \LL_\C^*/(S^!\times\Z). 
$$
Identifying $\LL_\C^*/\Z$ with the total space of the associated
bundle $S^1\to (\LL_\C^*\times S^1)/(S^1\times\Z)\to
\LL_\C^*/(S^1\times\Z)$, we can think of $J\in\JJ$ as a family of
almost complex structures $J_{z,t}$ parametrized by
$(z,t)\in\LL_\C^*\times S^1$, invariant under the diagonal
$S^1$-action on $(z,t)$ and the $\Z$-action on $z$. Then the
$V$-component of a gradient flow line $[v,z,\eta]$ of the functional
$\mathcal{T}_H$ satisfies the Floer equation
$$
   \p_sv+J_{z(s),t}(v)\bigl(\p_tv-X_H(v)\bigr) = 0.
$$
This is the Floer equation on a cylinder with an $S^1$-dependent
almost complex structure. It follows by standard methods in Floer
theory (see e.g.~\cite{salamon}) that one an achieve transversality for all
moduli spaces by generic choice of $J$ in this class. Compactness
modulo breaking follows by combining compactness modulo breaking for
the components $(z,\eta)$, which satisfy the gradient flow equation
for $\A^\mu$, with Floer compactness for the component $v$ satisfying
the Floer equation above. 

Let us point out that the preeding discussion is reminiscent of the
one in~\cite{cieliebak-mohnke}, where transversality for
holomorphic curves is established using domain-dependent almost
complex structures parametrized by the universal curve over
Deligne--Mumford space. 

%

We conclude this section with a restriction on contributions to the
Tate boundary operator $\p$ arising from transversality. Note that
generators of the Tate chain complex can be written as $u^kx$, where
$k\in\Z$ and $x$ is a either a critical point of $H$ or a critical
point of the Morse function $f$ on a nonconstant $1$-periodic orbit of $H$. 
Since the index of critical points of $\A^\mu$ is nonincreasing,
nonzero matrix elements $\langle x_+,u^\ell x_-\rangle$ can only exist for
$\ell\leq 0$. 

\begin{lemma}\label{lem:vanishing}
By appropriate choice of the Hamiltonian $H$ and a generic $J\in\JJ$
we can achieve that $\langle \p x_+,u^\ell x_-\rangle=0$ in the
following cases:  
\begin{enumerate}
\item[(i)] $x_\pm$ are critical points of $H$ and $\ell<0$; 
\item[(ii)] $x_\pm$ are critical points of the Morse function $f$ on
  distinct nonconstant $1$-periodic orbits $\gamma_\pm$ of $H$ whose
  multiplicities $k_+,k_-$ are relatively prime, and either $\ell<0$,
  or $\ell=0$ and $\CZ(\gamma_+)\leq\CZ(\gamma_-)$.   
\end{enumerate}
\end{lemma}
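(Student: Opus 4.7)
The plan is to combine a decoupling argument for Floer cylinders with constant asymptotes and a dimension-counting argument exploiting somewhere injectivity. Two structural facts are central: the equivariant Rabinowitz Floer differential $\p^{S^1}$ on $\C$ vanishes identically (since all critical points $[w^{(k)}]$ on the quotient $\C P^\infty_\infty$ sit in even degrees), and the coprimality $\gcd(k_+,k_-)=1$ rules out any branched-cover factorization of Floer cylinders between $\gamma_+$ and $\gamma_-$.

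For (i), choose $H$ as in the proof of Proposition~\ref{fix}, so that $H\equiv 0$ on $W$ and $H$ grows quadratically on the cylindrical end, and choose $J\in\JJ$ that is independent of $z\in\LL_\C^*/\Z$ on a neighborhood of $W$ in $V$ (compatible with $\Z$-equivariance). By the maximum principle, every Floer cylinder of $\mathcal{T}_H$ connecting two constant $1$-periodic orbits of $H$ remains in this neighborhood, where its $V$-component $v$ satisfies the $z$-independent $J$-holomorphic equation with constant asymptotes; since the energy of $v$ equals $\A_H(v_+)-\A_H(v_-)=0$, the cylinder $v$ is itself constant. The constant part of the Tate chain complex therefore decomposes as the tensor product $CM(W,f_W)\otimes FC^{S^1}$ with differential $\p_H\otimes 1+1\otimes\p^{S^1}$, and since $\p^{S^1}=0$, only matrix elements with $\ell=0$ survive, which proves (i).

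For (ii), the action filtration ($\A_H=0$ on constants, $>0$ on nonconstants) forbids intermediate constants in any cascade from $x_+$ to $u^\ell x_-$, so every cascade consists of Floer segments between nonconstant orbits linked by auxiliary Morse flows. A Floer cylinder between orbits of coprime multiplicities cannot factor through a nontrivial branched cover (this would scale both asymptotic multiplicities by the same factor), hence is somewhere injective, and a Sard--Smale argument in the $\Z$-equivariant class $\JJ$ yields transversality. For $\ell=0$ the cascade stays at a fixed Rabinowitz circle, and strict monotonicity of $\CZ$ along somewhere-injective Floer segments gives $\CZ(\gamma_+)>\CZ(\gamma_-)$, contradicting the hypothesis. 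For $\ell<0$ the Rabinowitz component shifts by $|\ell|\geq 1$, contributing $2|\ell|$ to the index drop on the $\A^\mu$ side while the $V$-side still requires strict $\CZ$-decrease; a combined index count forces the expected dimension of the relevant moduli to be negative, yielding emptiness for generic $J$. The main obstacle is the joint transversality for the coupled system $(v,z,\eta)$ with $s$-dependent $J_{z(s),t}$, which requires a Sard--Smale argument adapted to the $\Z$-equivariant family $\JJ$ of domain-parametrized almost complex structures, along the lines of the techniques in~\cite{cieliebak-mohnke}; a related subtlety is that the coprimality $\gcd(k_+,k_-)=1$ applies only to the outer asymptotes, while intermediate orbits in a cascade could share factors. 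This is handled by either arranging $H$ so that the relevant orbits in the given action window are simple, or by a refined analysis of multiply covered cascade components.
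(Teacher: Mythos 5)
Your overall strategy --- a special choice of $H$ and $J$ so that the moduli spaces decouple, coprimality to rule out multiple covers, and an index count combined with transversality --- is the same as the paper's. Your tensor--product description of the constant part in case (i) is a reasonable repackaging of the paper's observation that, for $H$ sufficiently $C^2$-small on $W$ and a $z$-independent $J$, all Floer cylinders between constants are Morse gradient trajectories, so that nothing of index difference other than the Morse one can contribute.

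The gap is in how you propose to close the transversality step in case (ii). You identify ``joint transversality for the coupled system $(v,z,\eta)$ with $s$-dependent $J_{z(s),t}$'' as the main obstacle and propose a Sard--Smale argument for the coupled system in the $\Z$-equivariant class $\JJ$. But transversality of the coupled problem is not what is needed here: its Fredholm index already incorporates the shift $2\ell$ coming from the Rabinowitz component, so it only tells you that rigid configurations have total index one --- it does not deliver the separate inequality $\CZ(\gamma_+)\geq\CZ(\gamma_-)+1$ on the $V$-side that is what actually rules out $\ell<0$. The paper's resolution is simpler and in the opposite order: first take $J$ to be $z$-\emph{independent}, i.e.\ a constant family in $\JJ(V,\lambda_V)$. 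Then the $V$-component of every Tate trajectory is an honest solution of the standard Floer equation in $V$, and transversality for these solutions alone is available --- in case (i) because they are Morse trajectories, in case (ii) because coprimality makes the cylinders from $\gamma_+$ to $\gamma_-$ somewhere injective --- yielding $\CZ(x_+)\geq\CZ(x_-)$, resp.\ $\CZ(\gamma_+)\geq\CZ(\gamma_-)+1$. Combined with the grading relation ${\rm ind}(x_+)={\rm ind}(x_-)+2\ell+1$ this forces $\ell\geq 0$, and for $\ell=0$ in case (ii) it contradicts $\CZ(\gamma_+)\leq\CZ(\gamma_-)$. Finally, the relevant moduli spaces being empty, and emptiness being an open condition, the vanishing persists under a small perturbation of $J$ to a generic element of $\JJ$; this openness argument is what replaces the Sard--Smale step you left unresolved. (Your side remark about intermediate orbits of non-coprime multiplicity in multi-cascade configurations is a legitimate concern; the paper's written proof only invokes the direct cylinders from $\gamma_+$ to $\gamma_-$ and does not address it explicitly either, so if you want to pursue it you should do so as a separate refinement rather than as part of this lemma's proof.)
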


\begin{proof}
In both cases, we can achieve transversality for all Floer cylinders
in $V$ connecting $x_+$ and $x_-$ (resp.~$\gamma_+$ and $\gamma_-$)
using a generic $z$-independent 
$J\in\JJ(V,\lambda_V)$. In case (i) this holds because, for $H$
sufficiently $C^2$-small on $W$, all Floer cylinders from $x_+$ to
$x_-$ are gradient trajectories, see~\cite{salamon}. In case (ii) it
holds because the relative primeness condition ensures that Floer
cylinders from $\gamma_+$ to $\gamma_-$ cannot be multiply covered. 

For such a $z$-independent $J\in\JJ(V,\lambda_V)$, Tate trajectories
project onto Floer trajectories in $V$. Hence in case (i), $\langle \p
x_+,u^\ell x_-\rangle\neq 0$ implies $\CZ(x_+)\geq \CZ(x_-)$. In
view of the grading condition $\CZ(x_+)=\CZ(x_-)+2\ell+1$, this
yields $\ell\geq 0$. In case (ii), $\langle \p
x_+,u^\ell x_-\rangle\neq 0$ and $\gamma_+\neq\gamma_-$ implies
$\CZ(\gamma_+)\geq \CZ(\gamma_-)+1$. Since the index ${\rm ind}(x_\pm)$
equals $\CZ(\gamma_\pm)$ or $\CZ(\gamma_\pm)+1$, the grading condition 
${\rm ind}(x_+)={\rm ind}(x_-)+2\ell+1$ yields $\CZ(x_+)\leq \CZ(x_-)+2\ell+2$
and thus $\ell\geq 0$. 

This proves vanishing of $\langle \p x_+,u^\ell x_-\rangle$ 
in cases (i) and (ii) for a $z$-independent
$J\in\JJ(V,\lambda_V)$. Now this vanishing persists for a sufficiently
small perturbation of $J$ to a generic element in $\JJ$. 
\end{proof}

\section{Computations and examples}\label{ex}

\subsection{$\C^n$ and subcritical Stein manifolds}

The Tate chain complex for $\C^n$ with integer coefficients is shown
in Figure~\ref{fig:Cn1}. 
\begin{figure}[ht]
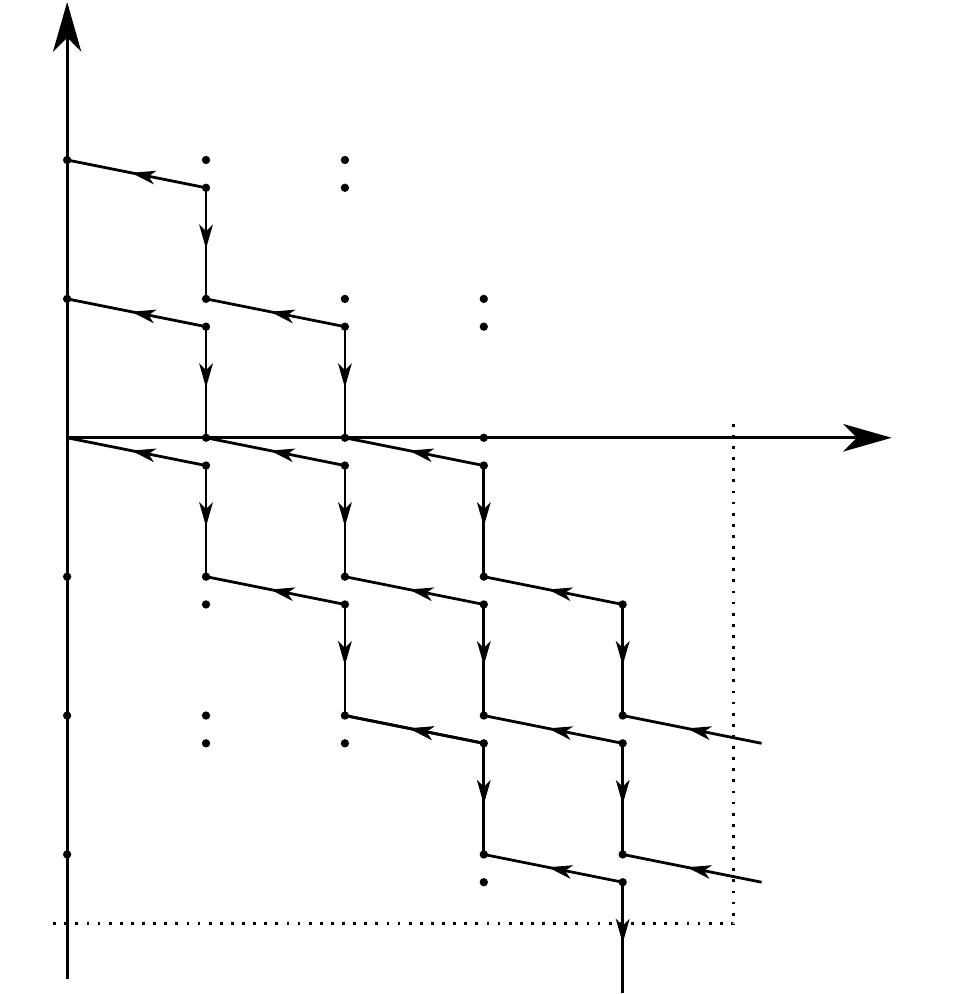
\caption{The Tate chain complex for $\C^n$}\label{fig:Cn1}
\end{figure}
Here the orbits are arranged according to
their $\A_H$- and $\A^\mu$-actions, where each pair of nearby dots
corresponds to the maximum and minimum of a small Morse function on
the circle. The red numbers denote the {\em degree} $|x|:={\rm
  CZ}(x)-n$, i.e., the Conley-Zehnder index minus $n$.  
For $H$ we use a quadratic Hamiltonian close to $z\mapsto|z|^2$,
composed with a strictly convex incrasing function $\R\to\R$. Then 
the generators with $\A^\mu=0$ along the horizontal axis are given by
the minimum $w_0^+$ of $H$ (of degree $0$), and to the generators
$w_k^\pm$ corresponding to the maximum and minimum of a small Morse
function on the periodic orbit $w_k$ of Conley-Zehnder index $n-1+2k$,
of degrees
$$
   |w_k^+|=2k,\qquad |w_k^-|=2k-1. 
$$
The other generators in the diagram are obtained from these by
applying $u^k$ for $k\in\Z$. The horizontal
arrows map generators of odd degree onto those of even degree and thus
compute for each fixed value of $\A^\mu$ the symplectic homology of
$\C^n$, which vanishes. The vertical arrows map the odd generators
$w_k^-$ onto multiples $a_ku^{-1}w_k^+$ of the even generators, so
altogether we have 
$$
   \partial w_k^+=0,\quad \partial w_{k+1}^- =
   w_k^++a_{k+1}u^{-1}w_{k+1}^+,\qquad k=0,1,2,\dots. 
$$  
Here $a_1,a_2,\dots$ is a nondecreasing sequence of natural numbers
which is given by $a_k=k$ for $n=1$, while for higher $n$ each natural
number is repeated $n$ times. The values $a_k$ correspond to the
multiplicities of the closed Reeb orbits on the (slightly perturbed)
unit sphere $S^{2n-1}\subset\C^n$ generating the positive part of
equivariant symplectic homology. 

Note that for each degree there is an infinite sequence of generators
on which $\A_H$ tends to $+\infty$ and $\A^\mu$ to $-\infty$. To
compute the Tate homology groups, it is convenient to introduce the
new generators 
$$
   x_k:=(-1)^ku^{-k}w_k^+,\qquad y_k:=(-1)^{k+1}u^{-k+1}w_k^-
$$
of degrees $|x_k|=0$ and $|y_k|=1$, on which the boundary operator is
given by 
$$
   \partial x_k=0,\quad \partial y_{k+1} = x_k-a_{k+1}x_{k+1},\qquad k=0,1,2,\dots.
$$  
Viewing Figure~\ref{fig:Cn1} as a graph in the obvious way, we see
that it consists of an infinite number of connected components
(``staircases''). The component containing $x_0$ is shown in Figure
\ref{fig:Cn2} for $n=1$ (the case of higher $n$ differs only by the
weights on the vertical arrows), and the other components are obtained
from this one by applying $u^k$ for $k\in\Z$. 
Now we can read off the Tate homology groups. 
\begin{figure}[ht]
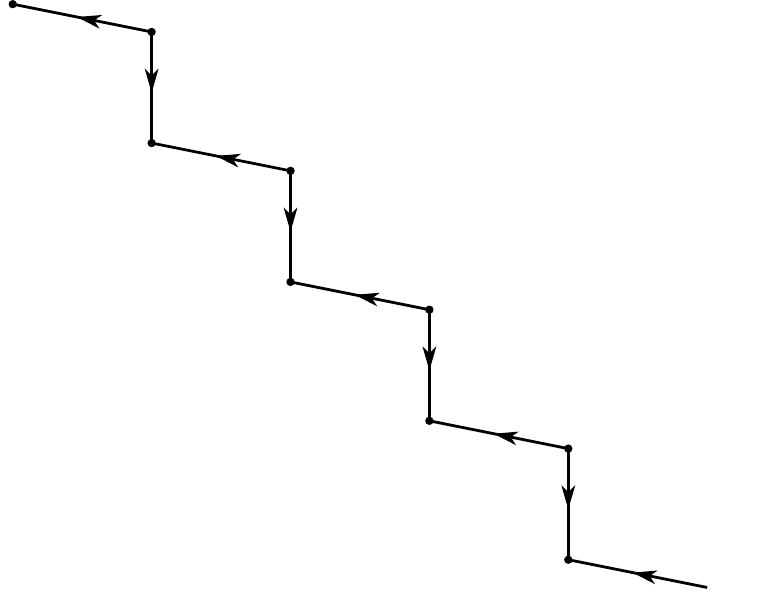
\caption{A connected component in the Tate chain complex for $\C$}\label{fig:Cn2}
\end{figure}
The chain complex for $\overrightarrow{H}\overleftarrow{T}$ consists
of {\em finite} sums of generators. All generators
$x_0,x_1,\dots$ of a given even degree are cycles, and $x_k$ is
homologous to $a_{k+1}x_{k+1}$ for each $k=0,1,2,\dots$. So
$\overrightarrow{H}\overleftarrow{T}=G\otimes_\Z\Z[u,u^{-1}]$, where $G$
is the abelian group described in terms of generators and relations by
$$
   G=\Big\langle x_k, k \in \mathbb{N}\;\Big|\; x_k-a_{k+1} x_{k+1}, k \in
   \mathbb{N} \Big\rangle.
$$
According to Proposition~\ref{prop:Q} below, the group $G$ is
isomorphic to $\Q$. 

By contrast, the chain complex for
$\overleftarrow{H}\overrightarrow{T}$ consists of possibly infinite
sums of generators on which $\A^\mu$ tends to $-\infty$. All such sums
of a given even degree are cycles, but now each even generator $x_k$
is the boundary of the infinite sum $y_{k+1}+\sum_{i=k+1}^\infty
a_{k+1}\cdots a_iy_{i+1}$, so
$\overleftarrow{H}\overrightarrow{T}=0$.  
 
In the complex $HT_a^b$ an even generator is a boundary iff
the staircase that begins there exits the region $\{\A^\mu\geq
a,\,\A_H\leq b\}$ to the bottom, i.e., through the line
$\{\A^\mu=a\}$. It follows that $HT_a^b$ is generated by the elements
of even degree along the right vertical edge of the region $\{\A^\mu\geq
a,\,\A_H\leq b\}$, i.e., whose $\A_H$-value is just below $b$. Note
that their degrees are bounded below by a fixed multiple of $a+b$, see Figure \ref{fig:Cn1}.
Since in the direct limit $b\to\infty$ these degrees go to $\infty$,
we conclude that $\underrightarrow{\lim}HT_a = 0$ for all $a$, and
hence $\underleftarrow{H}\underrightarrow{T} = 0$. 

The inverse limit $\underleftarrow{\lim}HT^k$ as $a\to-\infty$ for
$b=k\in\N$ is the free $\Z[u,u^{-1}]$-module on one generator $x_k$ of
degree $0$, and the map $\underleftarrow{\lim}HT^k\to
\underleftarrow{\lim}HT^{k+1}$ sends $x_k$ onto $a_{k+1}x_{k+1}$. Thus 
$\underleftarrow{\lim}HT^b=\Z[u,u^{-1}]$ for all $b$ and 
$\underrightarrow{H}\underleftarrow{T} = G\otimes_\Z\Z[u,u^{-1}]$, where
$G\cong\Q$ is the group above. So we have shown 

\begin{prop}\label{prop:C}
For the Liouville domain $\C^n$ and integer coefficients, the
diagram~\eqref{maindia2} becomes (with $\Q$ viewed as an abelian group)
\begin{equation}\label{maindia3}
\begin{xy}
 \xymatrix{
\Q\otimes_\Z\Z[u,u^{-1}]
\ar[d]^{Hk}
\ar[r]^{\rho}
& \Q\otimes_\Z\Z[u,u^{-1}]
\ar[d]^{\kappa}\\
0 \ar[r]^{\sigma} &
0.
}
\end{xy}
\end{equation}
\end{prop}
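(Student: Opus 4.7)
The plan is to realize the Tate chain complex of $\C^n$ in closed form by a clever choice of Hamiltonian and then read off each of the four corners of~\eqref{maindia2}.  First I would fix a radial Hamiltonian $H$ on $\C^n$ close to $|z|^2$ composed with a strictly convex increasing function, so that the critical set of $\A_H$ consists of the origin and a sequence of critical circles over the closed Reeb orbits on a slightly perturbed $S^{2n-1}$.  Picking a Morse function with one maximum and one minimum on each such circle produces the generators $w_0^+$ (of degree $0$) and $w_k^\pm$ for $k\geq 1$ with degrees $|w_k^+|=2k$ and $|w_k^-|=2k-1$; all remaining generators are the $u^\ell$--translates.  The four distinct types of arrows in Figure~\ref{fig:Cn1} are forced by Lemma~\ref{lem:vanishing} (so $\partial w_k^+=0$) together with the horizontal vanishing of non-equivariant symplectic homology of $\C^n$ and the identification of the positive equivariant symplectic homology of $S^{2n-1}$, which together yield
\[
   \partial w_k^+=0,\qquad \partial w_{k+1}^-=w_k^++a_{k+1}u^{-1}w_{k+1}^+,
\]
where $a_k$ is the nondecreasing multiplicity sequence (with $a_k=k$ when $n=1$, each integer repeated $n$ times otherwise).

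Next, I would pass to the cleaner generators $x_k:=(-1)^ku^{-k}w_k^+$ and $y_k:=(-1)^{k+1}u^{-k+1}w_k^-$ of degrees $0$ and $1$, whose boundaries become $\partial x_k=0$ and $\partial y_{k+1}=x_k-a_{k+1}x_{k+1}$.  The chain complex then manifestly splits, as a graph, into a $\Z$-shift-orbit of connected staircases, each isomorphic to the one depicted in Figure~\ref{fig:Cn2}.  This explicit decomposition lets one read off all four groups once one decides in which direction (finite sums versus infinite sums) cycles and boundaries are to be taken.

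For $\overrightarrow{H}\overleftarrow{T}$, cycles are finite $\Z$-combinations of the $x_k$ and the relation $x_k\sim a_{k+1}x_{k+1}$ produces, within one staircase, the direct limit
\[
   G=\Bigl\langle x_k,\ k\in\N\ \Bigm|\ x_k-a_{k+1}x_{k+1},\ k\in\N\Bigr\rangle,
\]
which I would identify with $\Q$ by quoting Proposition~\ref{prop:Q}; summing over staircases gives $G\otimes_\Z\Z[u,u^{-1}]\cong\Q\otimes_\Z\Z[u,u^{-1}]$.  For $\overleftarrow{H}\overrightarrow{T}$, infinite sums in the downward direction are allowed, and the geometric series $y_{k+1}+\sum_{i\geq k+1}a_{k+1}\cdots a_i\,y_{i+1}$ provides a bounding chain for every $x_k$, so the homology vanishes.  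For the two mixed versions I would analyze the subcomplexes $CT_a^b$ staircase by staircase: in $HT_a^b$ the only nonzero classes are the even-degree generators lying just below the line $\{\A_H=b\}$, with degrees forced to $\infty$ as $b\to\infty$, so $\underrightarrow{\lim}HT_a=0$ and therefore $\underleftarrow{H}\underrightarrow{T}=0$; conversely $\underleftarrow{\lim}HT^b$ is the free $\Z[u,u^{-1}]$-module on one generator $x_b$ and the connecting maps are multiplication by $a_{b+1}$, so the direct limit is again $G\otimes_\Z\Z[u,u^{-1}]\cong\Q\otimes_\Z\Z[u,u^{-1}]$.

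The main obstacle is the identification $G\cong\Q$: this is purely algebraic and depends on the explicit multiplicity pattern of the $a_k$, but it is exactly the content of the separately stated Proposition~\ref{prop:Q}, which I would invoke as a black box.  A secondary difficulty is justifying the precise shape of $\partial w_{k+1}^-$ above; for this I would rely on Lemma~\ref{lem:vanishing} to kill unwanted matrix elements and on the Morse--Bott computation of equivariant symplectic homology of the sphere (as used in~\cite{bourgeois-oancea1}) to pin down the coefficient $a_{k+1}$ up to sign, the sign being absorbed into the change of basis $x_k,y_k$.
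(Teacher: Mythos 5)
Your proposal is correct and follows essentially the same route as the paper: the same radial Hamiltonian and generators $w_k^\pm$, the same change of basis to $x_k,y_k$, the same staircase decomposition, the identification of $\overrightarrow{H}\overleftarrow{T}$ and $\underrightarrow{H}\underleftarrow{T}$ with $G\otimes_\Z\Z[u,u^{-1}]$ via Proposition~\ref{prop:Q}, and the same vanishing arguments (infinite bounding chains for $\overleftarrow{H}\overrightarrow{T}$, degrees escaping to infinity for $\underleftarrow{H}\underrightarrow{T}$). No substantive differences to report.
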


With rational coefficients, the diagram for $\C^n$ becomes (with $Q$
as a field)
\begin{equation}\label{maindia3Q}
\begin{xy}
 \xymatrix{
\Q[u,u^{-1}]
\ar[d]^{Hk}
\ar[r]^{\rho}_\cong
& \Q[u,u^{-1}]
\ar[d]^{\kappa}\\
0 \ar[r]^{\sigma} &
0.
}
\end{xy}
\end{equation}
Note that this is consistent with $\overrightarrow{H}\overleftarrow{T}
\cong H(\C^n;\Q)\otimes_\Q\Q[u,u^{-1}]=\Q[u,u^{-1}]$ being the homology of
the fixed point set, and $\underleftarrow{H}\underrightarrow{T}$ being
the localization of equivariant symplectic homology, which vanishes
for $\C^n$. These two facts together with the vanishing of equivariant
symplectic homology for subcritical Stein manifolds (see
e.g.~\cite{bourgeois-oancea}) and the
isomorphism property of $\rho$ yield more generally 

\begin{prop}
For a Liouville domain $W$ with vanishing equivariant symplectic
homology, e.g.~a subcritical Stein manifold, and rational coefficients
the diagram~\eqref{maindia2} becomes 
\begin{equation}\label{maindia4}
\begin{xy}
 \xymatrix{
H(W;\Q)\otimes_\Q\Q[u,u^{-1}]
\ar[d]^{Hk}
\ar[r]^{\rho}_\cong
& H(W;\Q)\otimes_\Q\Q[u,u^{-1}]
\ar[d]^{\kappa}\\
\overleftarrow{H}\overrightarrow{T}(W;\Q) \ar[r]^{\sigma} &
0.
}
\end{xy}
\end{equation}
\end{prop}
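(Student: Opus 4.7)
The strategy is pure assembly from three previously proved inputs; no new analytical work is required, and the proof will largely consist of bookkeeping the general diagram \eqref{maindia2} under two simplifying hypotheses.

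First, I would invoke Theorem~\ref{thm:maindia}: with coefficients in a field the map $\rho$ is an isomorphism, so identifying either upper corner automatically identifies the other. Second, I would apply Theorem~\ref{thm:main}(c), the fixed point property, to obtain
\[
   \underrightarrow{H}\underleftarrow{T}(W;\Q)\cong H_{*+n}(W,\partial W;\Q)\otimes_\Q\Q[u,u^{-1}],
\]
which after Poincar\'e-Lefschetz duality on the $2n$-manifold $W$ with boundary takes the form $H(W;\Q)\otimes_\Q\Q[u,u^{-1}]$ (up to the overall degree convention used in the proposition). This pins down both upper corners of \eqref{maindia4} and the top horizontal arrow.

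Third, I would invoke Theorem~\ref{thm:main}(d), i.e.\ Corollary~\ref{local}, to identify $\underleftarrow{H}\underrightarrow{T}(W;\Q)$ with the localization of $SH^{S^1}(W;\Q)$ with respect to $\overline{HT}$. By hypothesis $SH^{S^1}(W;\Q)=0$ (which for subcritical Stein manifolds is the content of \cite{Cie02a,bourgeois-oancea} as recalled in the Introduction), so its localization vanishes, supplying the lower-right entry. The lower-left entry $\overleftarrow{H}\overrightarrow{T}(W;\Q)$ is deliberately not computed by the proposition; the map $\sigma$ landing in $0$ is automatically surjective as required by Proposition~\ref{ta}(a), and the maps $Hk$ and $\kappa$ exist and make the square commute by Theorem~\ref{thm:maindia}.

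I do not expect any genuine obstacle. The only mild subtlety I would flag is the passage between the $H(W,\partial W;\Q)$ in Theorem~\ref{thm:main}(c) and the $H(W;\Q)$ written in the proposition; these agree up to Poincar\'e-Lefschetz duality and a degree shift, reflecting whether one takes the auxiliary Morse function on the fixed point set $W$ to point inward or outward at $\partial W$ in the Morse-Bott setup of Proposition~\ref{fix}. Once that identification is made, the diagram \eqref{maindia4} is exactly what \eqref{maindia2} reduces to under the two simplifying hypotheses in force.
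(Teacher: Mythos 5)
Your proposal is correct and follows essentially the same route as the paper: the authors likewise assemble the diagram from the fixed point property (Proposition~\ref{fix}), the identification of $\underleftarrow{H}\underrightarrow{T}$ with the localization of the vanishing $SH^{S^1}(W;\Q)$ (Corollary~\ref{local}), and the isomorphism property of $\rho$ from Theorem~\ref{thm:maindia}. Your remark on reconciling $H(W,\partial W;\Q)$ with $H(W;\Q)$ via Poincar\'e--Lefschetz duality is a point the paper glosses over, and is handled correctly.
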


It would be interesting to compute $\overleftarrow{H}\overrightarrow{T}$ in this case. 

\subsection{Cotangent bundles}\label{cotbund}

Let us now restrict to rational coefficients and set
$$
   \Lambda_\Q:=\Q[u,u^{-1}]. 
$$

\begin{prop}\label{prop:cot}
For a cotangent bundle $T^*N$ and rational coefficients, the
diagram~\eqref{maindia2} becomes 
\begin{equation}\label{maindia5}
\begin{xy}
\xymatrix{
H(N;\Q)\otimes_\Q\Lambda_\Q
\ar[d]^{Hk}
\ar[r]^{\rho}_\cong
& \underrightarrow{H}\underleftarrow{T}(T^*N;\Q)
\ar[d]^{\kappa}\\
\overleftarrow{H}\overrightarrow{T}(T^*N;\Q)   \ar[r]^{\sigma} &
\underleftarrow{H}\underrightarrow{T}(T^*N;\Q),
}
\end{xy}
\end{equation}
where $\underleftarrow{H}\underrightarrow{T}(T^*N;\Q)$ depends only on
$\pi_1(N)$ by Goodwillie's theorem~\cite{goodwillie}. In particular,
$\underleftarrow{H}\underrightarrow{T}(T^*N;\Q)=\Lambda_\Q$ if $N$ is simply
connected. \hfill$\square$ 
\end{prop}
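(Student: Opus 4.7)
The proposition is essentially an assembly of ingredients already established in the paper, specialized to $W = D^*N$. First, I would identify the upper-left corner. By Theorem~\ref{thm:main}(c), we have
\[
\overrightarrow{H}\overleftarrow{T}_*(T^*N;\Q) \;\cong\; \underrightarrow{H}\underleftarrow{T}_*(T^*N;\Q) \;\cong\; H_{*+n}(D^*N,\partial D^*N;\Q)\otimes_\Q\Lambda_\Q,
\]
where the first isomorphism is the map $\rho$ in~\eqref{maindia2}, which is an isomorphism over a field by Theorem~\ref{thm:maindia}. Combining Lefschetz duality $H_{k}(D^*N,\partial D^*N;\Q) \cong H^{2n-k}(D^*N;\Q)$ with the homotopy equivalence $D^*N\simeq N$ and Poincaré duality on the closed oriented $n$-manifold $N$, I obtain
\[
H_{*+n}(D^*N,\partial D^*N;\Q) \;\cong\; H^{n-*}(N;\Q) \;\cong\; H_*(N;\Q),
\]
which yields the identification in the upper-left corner of~\eqref{maindia5}. (In the non-orientable case one uses twisted coefficients in the orientation bundle, but since $H(N;\Q)$ appears, I assume $N$ is orientable as in the applications of interest.)

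Next, I would address the lower-right corner. By Theorem~\ref{thm:main}(d), $\underleftarrow{H}\underrightarrow{T}(T^*N;\Q)$ equals the localization of the equivariant symplectic homology $SH^{S^1}(T^*N;\Q)$. Under the spin hypothesis (or else using twisted coefficients as in the remark following Theorem~\ref{goodwil}), the Viterbo-type isomorphism $SH^{S^1}(T^*N;\Q) \cong H^{S^1}(LN;\Q)$ of~\cite{abbondandolo-schwarz, salamon-weber, viterbo} identifies this with the equivariant homology of the free loop space. Goodwillie's theorem~\cite{goodwillie} asserts precisely that the rational localization of $H^{S^1}(LN;\Q)$ depends only on $\pi_1(N)$, which gives the first assertion.

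For the final statement, if $N$ is simply connected, Goodwillie's theorem reduces the localization to that for a point. The loop space of a point is a point, so $H^{S^1}(\mathrm{pt};\Q) = H(BS^1;\Q) = \Q[u^{-1}]$, whose localization (inverting $u^{-1}$) is $\Lambda_\Q = \Q[u,u^{-1}]$. This is precisely Theorem~\ref{goodwil}, which I would simply invoke. The remaining ingredients of diagram~\eqref{maindia5}—commutativity, the module structure, and the surjectivity of $\sigma$—are inherited verbatim from Theorem~\ref{thm:maindia}.

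The whole argument is thus a packaging exercise, and the only non-trivial input is Goodwillie's deep theorem together with the Viterbo isomorphism (with its orientation subtleties), both of which are cited. The main conceptual obstacle—were one to try to prove it from scratch—would be Goodwillie's theorem itself; within the framework of this paper, however, it is treated as a black box.
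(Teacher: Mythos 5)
Your proposal is correct and follows essentially the same route as the paper, which offers no separate proof of Proposition~\ref{prop:cot} but treats it as an immediate assembly of Theorem~\ref{thm:maindia}, Proposition~\ref{fix} (with Poincar\'e--Lefschetz duality identifying $H_{*+n}(D^*N,\partial D^*N;\Q)$ with $H_*(N;\Q)$), Corollary~\ref{local}, and Theorem~\ref{goodwil}. Your explicit caveats about orientability and the spin/twisted-coefficients hypothesis in the Viterbo isomorphism are reasonable points that the paper leaves implicit.
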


{\bf Cotangent bundles of spheres. }For $N=S^n$, the above diagram becomes
\begin{equation}
\begin{xy}
\xymatrix{
\Lambda_\Q\oplus \Lambda_\Q
\ar[d]^{Hk}
\ar[r]^{\rho}_\cong
& \Lambda_\Q\oplus\Lambda_\Q
\ar[d]^{\kappa}\\
\overleftarrow{H}\overrightarrow{T}(T^*S^n;\Q)   \ar[r]^{\sigma} &
\Lambda_\Q\,.
}
\end{xy}
\end{equation}

For $N=S^2$, we can compute all the groups explicitly. 
The Tate chain complex for $T^*S^2$ is shown in Figure~\ref{fig:S2-1},
where the red numbers now denote the Conley-Zehnder index. 
To derive it, we first consider the round metric on $S^2$. For each
$k\in\N$, the $k$-fold covered unparametrized oriented closed
geodesics have Morse index $2k-1$ and form an $S^2$-family. Now we pick 
a non-reversible Finsler perturbation of the round
metric with precisely two simple closed geodesics, the
retrograde (shorter) one $r_1$ and the direct (longer) one $d_1$. For
$k\in\N$, their iterates $r_k$ and $d_k$ correspond to the minimum and
maximum of the corresponding $S^2$-family and thus have Morse index
$2k-1$ and $2k+1$, respectively. 
Each of them gives rise to two generators $r_k^\pm,d_k^\pm$
of the chain complex for the non-equivariant homology of the loop
space $LS^2$, corresponding to the minimum and maximum of a Morse
function on the circle, of Conley-Zenhder (= Morse) indices
$$
   {\rm CZ}(r_k^-)=2k-1,\quad 
   {\rm CZ}(r_k^+)=2k,\quad 
   {\rm CZ}(d_k^-)=2k+1,\quad 
   {\rm CZ}(d_k^+)=2k+2. 
$$
The remaining two generators on the horizontal axis $\A^\mu=0$ arise
from the constant loops corresponding to the minimum and maximum of a
Morse function on $S^2$ and have Conley-Zehnder index $0$ and $2$,
respectively. The other generators in the diagram are obtained from
these by applying $u^k$ for $k\in\Z$. 

\begin{figure}[ht]
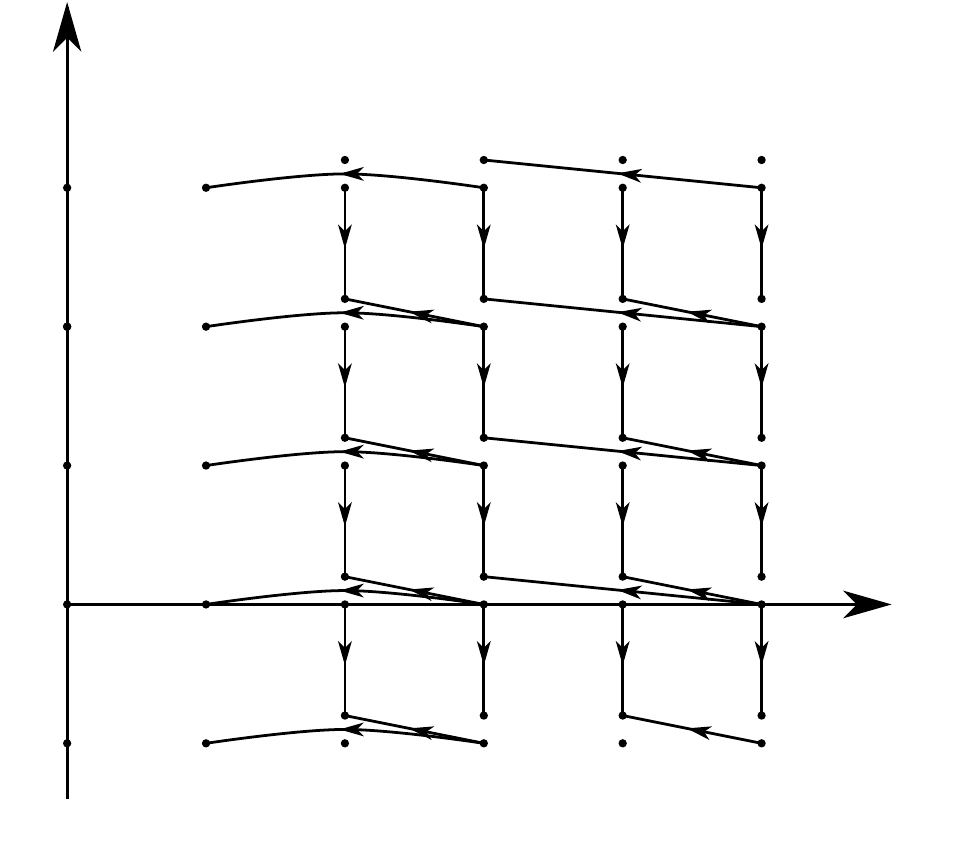
\caption{The Tate chain complex for $T^*S^2$}\label{fig:S2-1}
\end{figure}

The weights of the boundary operator can
be determined as follows. As in the case of $\C^n$, the weights on the
vertical arrows correspond to the multiplicities $k$ of the closed
geodesics $r_k,d_k$. The weights on the short horizontal arrows from
$d_k^-$ to $r_k^+$ must be $2$ for each $k$ in order for the four
generators $r_k^\pm,d_k^\pm$ to compute the homology of $\R P^3$, the
space of parametrized great circles on $S^2$. The matrix elements
$\langle\p r_k^+,d_{k-1}^-\rangle$ and $\langle\p
r_k^-,u^{-1}d_{k-1}^+\rangle$ vanish by Lemma~\ref{lem:vanishing}
because the multiplicities $k,(k-1)$ of $r_k,d_{k-1}$ are relative
prime. So we have
$$
   \p r_k^-=ku^{-1}r_k^+,\quad
   \p r_k^+=0,\quad
   \p d_k^-= 2r_k^+ + c_kd_{k-1}^+ + ku^{-1}d_k^+,\quad
   \p d_k^+=0
$$
for some coefficients $c_k\in\Z$. 

We claim that the {\em $c_k$ must be even}. To see this, we
consider the loop space homology with integer coefficients, which
according to~\cite{cohen-jones-yan} is given by the following quotient
of an exterior algebra on $3$ generators:
$$
   H_{*+2}(LS^2;\Z) \cong \Lambda[x,y,z]/\langle x^2,xz,2xy\rangle,\qquad
   |x|=-2,\ |y|=2,\ |z|=-1. 
$$
From this we read off that
$$
   H_{k}(LS^2;\Z) \cong \begin{cases}
      \Z & k=0, \\
      \Z & k \text{ odd}, \\
      \Z\oplus\Z_2 & k>0 \text{ even}. \\
   \end{cases}
$$
Comparing with Figure \ref{fig:S2-1}, we see that for each $k\geq 1$
the subcomplex consisting of $d_{k-1}^+,r_k^+,d_k^-$ with the
horizontal boundary operators must compute the
homology $\Z\oplus\Z_2$. Since $\p_{\rm hor} d_k^-=2r_k^++c_kd_{k-1}^+$, 
the homology of this subcomplex equals $\Z\oplus\Z_2$ if
$c_k$ is even, and $\Z$ if $c_k$ is odd. Thus $c_k$ must be even. In
Appendix~\ref{app:heat} we use the heat flow to show that the first
weight $c_1$ in fact equals $2$. 

To compute the Tate homology groups, it is convenient to introduce the
new generators 
$$
   x_k:=u^{-k-1}d_k^+,\quad 
   y_k:=u^{-k}d_k^-,\quad
   z_k:=u^{-k+1}r_k^-,\quad
   w_k:=u^{-k}r_k^+ 
$$
of Conley-Zehnder indices ${\rm CZ}(x_k)={\rm CZ}(w_k)=0$ and ${\rm
  CZ}(y_k)={\rm CZ}(z_k)=1$,
on which the boundary operator is given by 
$$
   \p z_k=w_k,\quad
   \p w_k=0,\quad
   \p y_k= 2w_k + c_kx_{k-1} + kx_k,\quad
   \p x_k=0.
$$
As in the case of $\C^n$, we see infinitely many connected components
(related by the action of $u$) on which $\A_H\to\infty$ and $\A^\mu\to
-\infty$. Figure~\ref{fig:S2-2} shows the component containing
$x_0:=u^{-1}\max$ together with the weights of the boundary maps. 

\begin{figure}[ht]
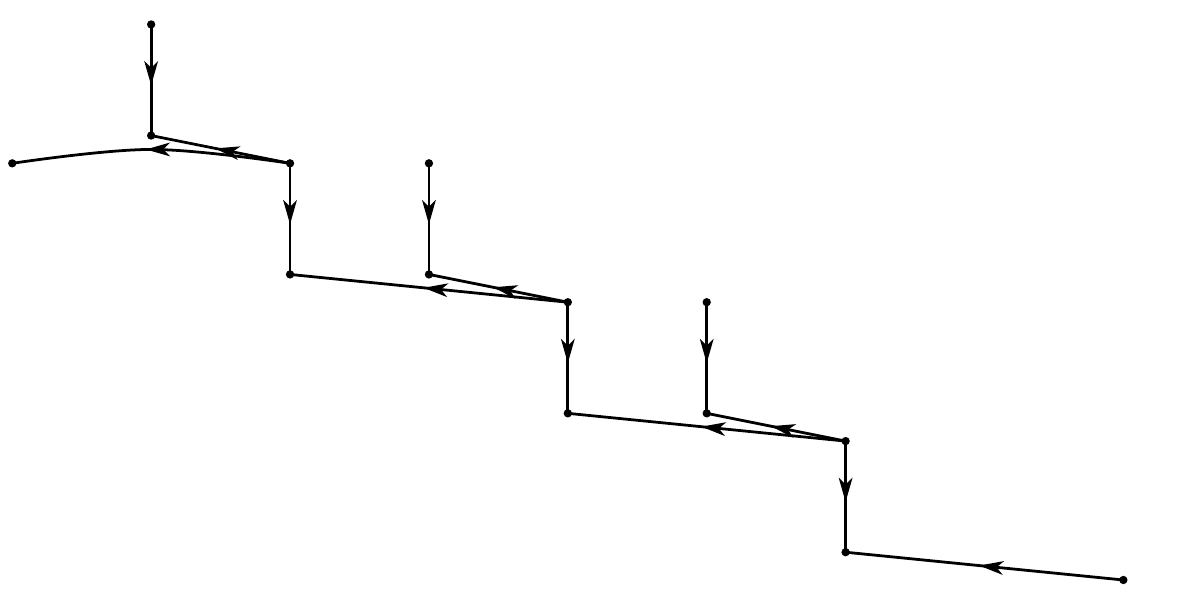
\caption{A connected component in the Tate chain complex for $T^*S^2$}\label{fig:S2-2}
\end{figure}

From Figures \ref{fig:S2-1} and \ref{fig:S2-2} we can now read off the
symplectic Tate homology groups with rational coefficients. For this, observe
that the rational homology of a component as in Figure \ref{fig:S2-2}
does not change if we remove all the generators $w_k,z_k$. The
resulting component looks like the one in Figure \ref{fig:Cn2}, just
with different weights $c_k$ instead of $1$ on the horizontal arrows. 

We claim that all the {\em $c_k$ are nonzero}.
Arguing by contradiction, suppose that $c_k=0$ and $c_i\neq 0$ for
$i<k$, for some $k\geq 1$. Then the finite connected component
containing $x_0$ in Figure~\ref{fig:S2-2} would contribute a direct
summand $\Lambda_\Q$ to
$\underleftarrow{H}\underrightarrow{T}(T^*S^2;\Q)$  
in addition to the one generated by the min, contradicting 
$\underleftarrow{H}\underrightarrow{T}(T^*S^2;\Q)=\Lambda_\Q$ 
from Proposition~\ref{prop:cot}. 

It follows from the claim that, with rational coefficients, each
component has the same contribution to symplectic Tate homology as in
the $\C^n$ case. 
Taking into account the contribution from the min in
Figure \ref{fig:S2-1} (which does not interact with anything else), we
obtain the diagram for $T^*S^2$ with rational coefficients: 
\begin{equation}
\begin{xy}
\xymatrix{
\Lambda_\Q\oplus \Lambda_\Q
\ar[d]^{Hk}
\ar[r]^{\rho}_\cong
& \Lambda_\Q\oplus\Lambda_\Q
\ar[d]^{\kappa}\\
\Lambda_\Q   \ar[r]^{\sigma} &
\Lambda_\Q\,.
}
\end{xy}
\end{equation}

{\bf Cotangent bundles of tori. }
For $N=T^n$ with the flat metric the closed geodesics occur in
Morse-Bott families, one for each class in $H_1(T^n)$, so there are no
gradient trajectories between different components. Since the circle
action is locally free in the nontrivial homology classes, their
contribution to the Tate homology vanishes with rational
coefficients. Thus the only contribution comes from the constants and
we obtain the diagram for $T^*T^n$ with rational coefficients:
\begin{equation}
\begin{xy}
\xymatrix{
H(T^n;\Q)\otimes_\Q\Lambda_\Q
\ar[d]^{Hk}
\ar[r]^{\rho}_\cong
& H(T^n;\Q)\otimes_\Q\Lambda_\Q
\ar[d]^{\kappa}\\
H(T^n;\Q)\otimes_\Q\Lambda_\Q   \ar[r]^{\sigma} &
H(T^n;\Q)\otimes_\Q\Lambda_\Q\,.
}
\end{xy}
\end{equation}

\begin{rem}
By the same argument, all the groups in the diagram coincide also for manifolds of negative sectional curvature, and for products of such manifolds (probably also for manifolds of nonpositive sectional curvature). So in this class of manifolds the Tate homology of a product is given by the tensor product of the Tate cohomologies via the K\"unneth formula. One may wonder whether such a result holds more generally. 
\end{rem}

\begin{ex}\label{ex:kappa}
{\rm We can construct a closed $4$-manifold with the same fundamental
group as $T^n$ as follows. The $n$-fold connected sum
$\#_n(S^1\times S^3)$ has fundamental group the free group on $n$
generators. Surgery on $n\choose2$ embedded loops representing all
commutators of the generators yields a closed $4$-manifold $X_n$
with $\pi_1(X_n)=H_1(X_n)=\Z^n$. By the Mayer-Vietoris sequence, the
second homology of $X_n$ equals $\Z^{n\choose2}\oplus\Z^{n\choose2}$,
where the first $n\choose2$ generators are represented by the
$2$-spheres ${\rm pt}\times S^2$ in the surgery regions $D^2\times S^2$,
and the remaining $n\choose 2$ generators are represented by the
$2$-tori obtained from the disks $D^2\times{\rm pt}$ in the surgery regions
under the identifications on the boundary according to the
commutators.  
So the total homology of $X_n$ has dimension
$d_n=2+2n+2{n\choose2}=n^2+n+2$. Moreover, each $2$-sphere intersects
the corresponding $2$-torus in one point and therefore the
intersection form of $X_n$ is even. Since $X_n$ has no $2$-torsion in
its first homology group, it follows from~\cite[Corollary
  5.7.6]{gompf-stipsicz} that $X_n$ is spin. So we can apply
Theorem~\ref{goodwil} to obtain
$\underleftarrow{H}\underrightarrow{T}(T^*X_n;\Q) = 
\underleftarrow{H}\underrightarrow{T}(T^*T^n;\Q) = \Lambda_\Q^{2^n}$ and we
obtain the diagram for $T^*X_n$ with rational coefficients: 
\begin{equation}
\begin{xy}
\xymatrix{
\Lambda_\Q^{d_n}
\ar[d]^{Hk}
\ar[r]^{\rho}_\cong
& \Lambda_\Q^{d_n}
\ar[d]^{\kappa}\\
\overleftarrow{H}\overrightarrow{T}(T^*X_n;\Q)   \ar[r]^{\sigma} &
\Lambda_\Q^{2^n},
}
\end{xy}
\end{equation}
Since $d_n>2^n$ for $n\leq 4$ and $d_n<2^n$ for $n\geq 6$, this shows
that the map $\kappa$ need neither be injective nor surjective.}
\end{ex}

\appendix

\section{Some presentations of the group $\Q$}

Let $a=(a_k)_{k \in \mathbb{N}}$ be a sequence of positive integers
$a_k$. To such a sequence we associate a group $G_a$, presented as 
$$
   G_a=\Big\langle x_k, k \in \mathbb{N}\;\Big|\; x_k-a_k x_{k+1}, k \in
   \mathbb{N} \Big\rangle.
$$ 
To a sequence $a=(a_k)$ we associate a sequence $a'=(a_k')$ of prime numbers by
removing all the $a_k$ that are equal to $1$, and replacing each $a_k>1$ by
the numbers in its prime decomposition, written in nondecreasing order
and with repetitions. It is easy to see that $G_a=G_{a'}$. Note that
the resulting sequence is finite if only finitely many $a_k$ are
bigger than $1$, and empty if all the $a_k$ are equal to $1$; in these
cases $G_a=G_{a'}$ is obviously isomorphic to $\Z$, generated by the last
$x_k$. The following result shows that for ``generic'' sequences 
the group $G_a$ is isomorphic to $\Q$. 

\begin{prop}\label{prop:Q}
Let $a=(a_k)$ be a sequence of positive integers whose associated
sequence of primes $(a_k')$ satisfies
\begin{equation}\label{infinite}
   \#\{k \in \mathbb{N}: a_k'=p\}=\infty \text{ for every prime number
   } p.
\end{equation}
Then the group $G_a=G_{a'}$ is isomorphic to the additive group $\Q$.
\end{prop}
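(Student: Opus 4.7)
The strategy is to realize $G_a$ as a direct limit of infinite cyclic groups and then embed that limit into $\Q$ so that hypothesis~(\ref{infinite}) becomes exactly the statement that the image fills $\Q$.

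More precisely, consider the direct system
\[
   \Z \xrightarrow{\,a_1\,} \Z \xrightarrow{\,a_2\,} \Z \xrightarrow{\,a_3\,} \cdots ,
\]
in which the $k$-th arrow is multiplication by $a_k$. I would first identify $G_a$ with $\underrightarrow{\lim}\,\Z$ by sending the generator $x_k$ of $G_a$ to the generator of the $k$-th copy of $\Z$; the defining relation $x_k=a_k x_{k+1}$ is then tautologically true in the direct limit. The inverse map sends an equivalence class $[n,k]$ to $n x_k\in G_a$, well-defined because $[a_k n,k+1]\mapsto a_k n\,x_{k+1}=n x_k$ in $G_a$.

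Next I would define compatible maps $\psi_k\colon\Z\to\Q$ by $\psi_k(n):=n/(a_1 a_2\cdots a_{k-1})$, with the empty product convention $\psi_1=\mathrm{id}$. The identity $\psi_{k+1}(a_k n)=\psi_k(n)$ is immediate, so these induce a homomorphism $\psi\colon G_a\to\Q$ that is visibly injective (a nonzero integer has nonzero image). The image of $\psi$ is the ascending union
\[
   \operatorname{im}\psi = \bigcup_{k\geq 1}\tfrac{1}{a_1\cdots a_{k-1}}\Z\ \subset\ \Q .
\]

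Finally, to show $\operatorname{im}\psi=\Q$, I would invoke the equality $G_a=G_{a'}$ noted just before the proposition to replace $(a_k)$ by the associated sequence of primes $(a_k')$; condition~(\ref{infinite}) is preserved by this passage. For a given $m/q\in\Q$ with prime factorization $q=p_1^{e_1}\cdots p_r^{e_r}$, (\ref{infinite}) lets me pick $k$ so large that each $p_i$ occurs at least $e_i$ times among $a_1',\ldots,a_{k-1}'$; then $q$ divides $a_1'\cdots a_{k-1}'$ and hence $m/q\in\operatorname{im}\psi_k$. This surjectivity combined with the injectivity above yields the desired isomorphism $G_a\cong\Q$. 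The only point requiring care is the direct limit identification, which is entirely routine; the rest is an elementary divisibility argument in $\Q$, so no substantial obstacle is expected.
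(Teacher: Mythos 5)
Your argument is correct, and it takes a genuinely different route from the paper's. The paper proceeds in two steps: it first constructs an explicit isomorphism $G_a\cong G_b$ between the groups attached to any two prime sequences satisfying~(\ref{infinite}) (via a carefully chosen reindexing function $m$ and coefficients $c_k$), and then separately identifies $G_a$ with $\Q$ for the single sequence $a_k=k+1$ through the explicit map $p/q\mapsto p(q-1)!\,x_q$, whose well-definedness and additivity require some factorial bookkeeping. You instead realize $G_a$ as the direct limit of $\Z\xrightarrow{a_1}\Z\xrightarrow{a_2}\cdots$ and embed it into $\Q$ by $x_k\mapsto 1/(a_1\cdots a_{k-1})$, reducing surjectivity to the observation that~(\ref{infinite}) makes every denominator eventually divide some partial product; injectivity uses the same structural fact the paper invokes (every element of $G_a$ has the form $d\,x_k$), which your direct-limit description makes explicit. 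Your approach is shorter and handles an arbitrary admissible sequence in one stroke; what it does not reproduce is the paper's intermediate statement that all these groups are mutually isomorphic \emph{before} any of them is identified with $\Q$ (though of course that follows a posteriori). The only point worth spelling out slightly more in a final write-up is the identification $G_a\cong\underrightarrow{\lim}\,\Z$, i.e., that the presentation of $G_a$ is exactly the standard presentation of the colimit of that system of cyclic groups, so that in particular every element is of the form $n\,x_k$ and $n\,x_k=0$ forces $n=0$; as you say, this is routine.
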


\textbf{Proof: }
{\bf Step 1. }
By the preceding discussion, we may assume that $a=a'$ is a sequence
of prime numbers. Let us first show that if $a$ and $b$ are two
sequences of prime numbers satisfying~\eqref{infinite}, then the
groups $G_a$ and $G_b$ are isomorphic. 

We denote the generators of $G_a$ by $x_k$ and the generators of $G_b$ by $y_k$.
Our aim is to construct an isomorphism $h \colon G_a \to G_b$ explicitly. For two integers $c$ and $d$ we write $c|d$ if
$c$ divides $d$.  We first define a function 
$m \colon \mathbb{N} \to \mathbb{N}$ as follows
$$m(k)=\left\{\begin{array}{cc}
1 & k=1\\
\min\Big\{\nu \in \mathbb{N}: \prod_{j=1}^{k-1} a_j | \prod_{j=1}^{\nu-1} b_j\Big\} & k \geq 2.
\end{array}\right.$$
Note that by our assumption (\ref{infinite}) the function $m$ is well defined. We next define for every
positive integer $k$ an integer $c_k$ by 
$$c_k=\left\{\begin{array}{cc}
1 & k=1\\
\frac{\prod_{j=1}^{m(k)-1}b_j}{\prod_{j=1}^{k-1}a_j} & k \geq 2.
\end{array}\right.$$
We next define a homomorphism $h \colon G_a \to G_b$ on generators as
$$h(x_k)=c_k y_{m(k)}, \quad k \in \mathbb{N}.$$
It remains to check that $h$ is well defined, i.e.\,it respects the relations in $G_a$. For that purpose we
compute
\begin{eqnarray*}
a_k h(x_{k+1}) &=& a_k c_{k+1} y_{m(k+1)}\\
&=&a_k \frac{\prod_{j=1}^{m(k+1)-1}b_j}{\prod_{j=1}^k a_j} y_{m(k+1)}\\
&=&\frac{\prod_{j=1}^{m(k)-1}b_j}{\prod_{j=1}^{k-1} a_j}
 \prod_{j=m(k)}^{m(k+1)-1} b_j y_{m(k+1)}\\
&=&c_k y_{m(k)}\\
&=&h(x_k).
\end{eqnarray*}
This proves that $h$ is a well defined homomorphism. 

We next check that $h$ is injective. For this purpose we suppose that $\xi \in G_a$ satisfies
$$h(\xi)=0.$$
In view of the structure of the group $G_a$ there exists $d \in \mathbb{Z}$ and a generator $x_k$ such that
$$\xi=dx_k.$$
Hence we obtain
$$0=dh(x_k)=d c_k y_{m(k)}.$$
Since there is no torsion in $G_b$ and $c_k \neq 0$ we conclude that $d=0$ and consequently
$$\xi=0.$$
This proves that $h$ is injective. 

It remains to check that $h$ is surjective. For this purpose choose $\eta \in G_b$. Since the function $m$
is unbounded there exists $k \in \mathbb{N}$ and $e \in \mathbb{Z}$ such that
$$\eta=e y_{m(k)}.$$
In view of our assumption (\ref{infinite}) there exists $\ell>k$ such that
$$c_k| \prod_{j=k}^\ell a_j.$$
We compute
\begin{eqnarray*}
c_k \Bigg( h\bigg(\frac{e \prod_{j=k}^\ell a_j}{c_k} x_{\ell+1}\bigg)-\eta\Bigg)&=&
e h(x_k)-c_k e y_{m(k)}=0,
\end{eqnarray*}
and therefore, since there is no torsion in $G_b$, we obtain
$$ h\bigg(\frac{e \prod_{j=k}^\ell a_j}{c_k} x_{\ell+1}\bigg)=\eta.$$
This proves that $h$ is surjective. In particular, we have shown that
$h$ is a group isomorphism. 

{\bf Step 2. }
It remains to prove $G_a\cong\Q$ for some sequence $(a_k)$ whose
associated sequence of prime numbers satisfies~\eqref{infinite}. 
We choose the sequence $a_k=k+1$, so the group $G_a$ is  
$$
   G_a=\Big\langle x_1,x_2,\dots\;\Big|\;
   x_1-2x_2,\;x_2-3x_3,\;x_3-4x_4,\dots \Big\rangle. 
$$
Note that in $G_a$ we have for all $q<q'$ the identity
$$
   x_q = \frac{q'!}{q!}x_{q'}. 
$$
We claim that the required group isomorphism is given by 
$$
   \phi:\Q\to G_a,\qquad \frac{p}{q}\mapsto p(q-1)!x_q. 
$$
Here $p,q$ need not be relatively prime. Let us first check that
$\phi$ is well-defined. For this we compute for $r\in\N$:
\begin{align*}
   \phi\Bigl(\frac{p}{q}\Bigr)
   &= p(q-1)!x_q 
   = \frac{p(q-1)!(qr)!}{q!}x_{qr}  
   = \frac{p(qr)!}{q}x_{qr} \cr  
   &= pr(qr-1)!x_{qr}
   = \phi\Bigl(\frac{pr}{qr}\Bigr).   
\end{align*}
To show that $\phi$ is a group homomorphism, we compute
\begin{align*}
   \phi\Bigl(\frac{p}{q}\Bigr)+\phi\Bigl(\frac{p'}{q'}\Bigr)
   &= p(q-1)!x_q + p'(q'-1)!x_{q'} \cr
   &= \Bigl(p(q-1)!\frac{(qq')!}{q!} +
   p'(q'-1)!\frac{(qq')!}{q'!}\Bigr) x_{qq'} \cr
   &= \Bigl(\frac{p}{q}+\frac{p'}{q'}\Bigr) (qq')!x_{qq'} \cr
   &= (pq'+p'q)(qq'-1)!x_{qq'} \cr
   &= \phi\Bigl(\frac{p}{q}+\frac{p'}{q'}\Bigr). 
\end{align*}
Injectivity of $\phi$ holds because $rx_q\neq 0$ in $G_a$ for any
$r,q\in\N$,
and surjectivity follows from
$$
   \phi\Bigl(\frac{1}{q!}\Bigr) = (q!-1)!x_{q!} = (q!-1)!\frac{q!}{(q!)!}x_q 
   = x_q. 
$$
This concludes the proof of Proposition~\ref{prop:Q}. 
\hfill $\square$

\section{The heat flow on $S^2$}\label{app:heat}

In view of the results of Salamon and Weber
\cite{salamon-weber, weber}, symplectic homology of a cotangent bundle
can be understood in terms of the heat flow. In this appendix we use
to heat flow on $S^2$ to compute the coefficient $c_1$ in
Figure~\ref{fig:S2-2} to be $c_1=2$. 

We consider $S^2=\{x \in \mathbb{R}^3\;\bigl|\; |x|=1\}$ endowed with its standard round metric, i.e.,
the metric induced from the standard metric on $\mathbb{R}^3$. We abbreviate by
$\mathscr{L}=C^\infty(\mathbb{R}/\mathbb{Z},S^2)$ the free loop space. Closed geodesics on $S^2$ arise as critical points of the energy functional $E \colon \mathscr{L} \to \mathbb{R}$ given for $v \in \mathscr{L}$ by
$$E(v)=\frac{1}{2}\int_0^1 |\dot v|^2 dt.$$
Negative $L^2$-gradient flow lines of $E$ can be interpreted as
solutions $v \in C^\infty(\mathbb{R} \times \mathbb{R}/\mathbb{Z},
S^2)$ of the parabolic PDE 
\begin{equation}\label{grad}
\partial_s v = \nabla_t\p_t v = \partial^2_t v+(\partial_t v)^2 v.
\end{equation}
We consider the geodesic $v_- \in \mathscr{L}$ given by
$$v_-(t)=(0,\cos(2 \pi t),\sin(2 \pi t))$$
and our aim is to determine its unstable manifold $W^u(v_-)$ with
respect to the negative $L^2$-gradient of $E$. We identify $W^u(v_-)$
with all solutions $v$ of (\ref{grad}) satisfying 
\begin{equation}\label{asymp}
\lim_{s \to -\infty} v(s,t)=v_-(t)
\end{equation}
via the map
$$v \mapsto v(0, \cdot) \in \mathscr{L}.$$
For $r \in \mathbb{R}/\mathbb{Z}$ let $R_r$ be the orthogonal $3 \times 3$-matrix
$$R_r=\left(\begin{array}{ccc}
1 & 0 & 0\\
0 & \cos(2\pi r) & -\sin(2\pi r)\\
0 & \sin(2\pi r) & \cos(2 \pi r)\\
\end{array}\right).$$
Consider the following circle action on $\mathscr{L}$, namely for $r \in \mathbb{R}/\mathbb{Z}$
and $v \in \mathscr{L}$ define $r_* v \in \mathscr{L}$ by
$$r_* v(t)=R_r v(t-r).$$
Note that $v_-$ is fixed under this circle action. Moreover, both the energy functional $E$ and the $L^2$-metric are invariant under this circle action and therefore the gradient flow equation (\ref{grad}) is invariant as well. Since the Morse index of $E$ at $v_-$ is 1, we conclude that its unstable manifold is 1-dimensional. Therefore each $v \in W^u(v_-)$ has to be fixed under the $S^1$-action. We conclude that there exist smooth functions $x$ and $y$ depending only on the $s$-variable such that $v$ can be written as
$$v(s,t)=\left(\begin{array}{c}
x(s)\\
\sqrt{1-x(s)^2}\cos\big(2 \pi t -y(s)\big)\\
\sqrt{1-x(s)^2}\sin\big(2 \pi t -y(s)\big)\\
\end{array}\right).$$
Applying (\ref{grad}) to this expression, we arrive at the following system of equations:
$$\left. \begin{array}{c}
\partial_s x=4 \pi^2 (1-x^2)x,\\
\Big(\frac{x \partial_s x}{\sqrt{1-x^2}}-4 \pi^2 x^2 \sqrt{1-x^2}\Big)\cos(2 \pi t-y)=\sqrt{1-x^2} \sin(2\pi t-y)\partial_s y,\\
\Big(\frac{x \partial_s x}{\sqrt{1-x^2}}-4 \pi^2 x^2 \sqrt{1-x^2}\Big)\sin(2 \pi t-y)=-\sqrt{1-x^2} \cos(2\pi t-y)\partial_s y.\\
\end{array}\right\}$$
Plugging the first equation into the latter two equations, we obtain the following equivalent system: 
$$\left. \begin{array}{c}
\partial_s x=4 \pi^2 (1-x^2)x,\\
\partial_s y=0.\\
\end{array}\right\}$$
In particular, $y$ is constant and in view of the asymptotic behaviour (\ref{asymp}) we conclude
$$y=0.$$
The first equation for $x$ is a Bernoulli type ODE whose explicit solution to the initial condition
$$x(0)=x_0 \in (-1,1)$$
is given by the expression
$$x(s)=\frac{x_0}{\sqrt{(1-x_0^2)e^{-8\pi^2 s}+x_0^2}}.$$
Hence $v$ becomes
$$v(s,t)=\left(\begin{array}{c}
\frac{x_0}{\sqrt{(1-x_0^2)e^{-8\pi^2 s}+x_0^2}}\\
\frac{(1-x_0)^2 e^{-8\pi^2 s}}{(1-x_0^2)e^{-8\pi^2 s}+x_0^2}\cos(2 \pi t)\\
\frac{(1-x_0)^2 e^{-8\pi^2 s}}{(1-x_0^2)e^{-8\pi^2 s}+x_0^2}\sin(2 \pi t)\\
\end{array}\right).$$
Note that the positive asymptotic $\lim_{s \to \infty} v(s,t)$ of $v$ is given by the constant loop
$\pm(1,0,0)$, where the sign coincides with the sign of $x_0$. Using
invariance of the gradient flow under the action of the orthogonal
group $O(3)$ on $S^2$, one can now determine explicitly the unstable
manifold for any simple closed geodesic on $S^2$. 

It follows that the fixed positive asymptotic $(1,0,0)$ is hit by precisely two isolated
gradient flow lines. One of these two gradient flow lines has negative
asymptotic $t \mapsto (0,\cos(2\pi t),\sin(2 \pi t))$ 
and the other one has negative asymptotics $t \mapsto (0,\cos(-2\pi
t), \sin(-2\pi t)$. This shows that the coefficient $c_1$ in
Figure~\ref{fig:S2-2} equals $2$, so twice the fundamental class of
$S^2$ becomes zero in localized equivariant
loop space homology.


\begin{thebibliography}{99}
 \bibitem{abbondandolo-schwarz}A.\,Abbondandolo, M.\,Schwarz, \emph{
 On the Floer homology of cotangent bundles}, Comm.\,Pure
 Appl.\,Math. \textbf{59} (2006), 254--316.
\bibitem{abouzaid}M.\,Abouzaid, \emph{Symplectic cohomology and Viterbo's theorem},
arXiv:1312.3354.
\bibitem{abraham-marsden}R.~Abraham, J.~Marsden, {\em Foundations of
  mechanics}, second edition, Benjamin/Cummings Publishing Co.~(1978).  
\bibitem{adem_cohen_dwyer} A.\,Adem, R.\,L.\,Cohen, W.\,Dwyer, \emph{Generalized Tate homology, homotopy fixed points and the transfer}, Algebraic topology, 1�13, Contemp. Math., 96, Amer. Math. Soc., Providence,
RI, 1989
\bibitem{atiyah-bott} M.\,Atiyah, R.\,Bott, \emph{The moment map and
  equivariant cohomology}, Topology {\bf 23} (1984), no.\,1, 1--28.
\bibitem{bourgeois-oancea1} F.\,Bourgeois, A.\,Oancea, \emph{Symplectic homology, autonomous Hamiltonians, and Morse-Bott moduli spaces}, Duke Math.\,J. \textbf{146},no.\,1
(2009), 71--174.
\bibitem{bourgeois-oancea} F.\,Bourgeois, A.\,Oancea, \emph{The Gysin exact sequence for
$S^1$-equivariant symplectic homology}, J.\,Topol.\,Anal.\,\textbf{5}, no.\,5 (2013),
361--407.
\bibitem{bourgeois-oancea2} F.\,Bourgeois, A.\,Oancea, \emph{$S^1$-equivariant symplectic homology and linearized contact homology}, arXiv:1212.3731.
\bibitem{brown} K.\,Brown, \emph{Cohomology of groups}, Graduate Texts in Mathematics, 87, Springer, New York, (1994)
\bibitem{cencelj} M.\,Cencelj, \emph{Jones-Petrack cohomology}, Quart.\,J.\,Math.\,Oxford Ser.\,(2) \textbf{46} (1995), no.\,184, 409--415.
\bibitem{Cie02a} K.~Cieliebak, {\em Handle attaching in symplectic
homology and the chord conjecture}, J.~Eur. Math. Soc.~(JEMS) {\bf 4},
no.~2, 115--142 (2002).
\bibitem{CFH95} K.~Cieliebak, A.~Floer and H.~Hofer, {\em Symplectic
    homology II: A general construction}, Math.~Z.~{\bf 218}, no.~1,
  103--122 (1995).
\bibitem{cieliebak-frauenfelder} K.\,Cieliebak, U.\,Frauenfelder, \emph{A Floer homology for exact contact embeddings}, Pacific J.\,Math. \textbf{239} (2009), no.\,2, 251--316.
\bibitem{CF-Morse} K.\,Cieliebak, U.\,Frauenfelder, \emph{Morse homology on noncompact manifolds}, J.\,Korean Math.\,Soc.\,\textbf{48}, no.\,4 (2011), 749--774.
\bibitem{cieliebak-mohnke} K.~Cieliebak, K.~Mohnke, {\em Symplectic
  hypersurfaces and transversality in Gromov--Witten theory},
  J.~Symplectic Geom.~{\bf 5} (2007), no.~3, 281--356.
\bibitem{cieliebak-oancea} K.~Cieliebak, A.~Oancea, {\em Symplectic
  homology and the Eilenberg-Steenrod axioms}, in preparation. 
\bibitem{cohen-jones-yan} R.\,Cohen, J.\,Jones, J.\,Yan, \emph{The
  loop homology algebra of spheres and projective spaces}, Categorical
  decomposition techniques in algebraic topology (Isle of Skye, 2001),
  77--92, Progr. Math., 215, Birkh\"auser (2004).
 \bibitem{eckmann-hilton}B.\,Eckmann, P.\,Hilton, \emph{Commuting
  Limits with Colimits}, Journal of Algebra \textbf{11}, 116--144
  (1969)
\bibitem{eilenberg-steenrod}S.\,Eilenberg, N.\,Steenrod, \emph{Foundations of AlgebraicTopology}, Princeton University Press, Princeton (1952).
\bibitem{EGH} Y.~Eliashberg, A.~Givental, H.~Hofer, {\em Introduction
  to symplectic field theory}, Geom. Funct. Anal. 2000, Special
  Volume, Part II, 560--673. 
\bibitem{FloHof93} A.~Floer, H.~Hofer, {\em Coherent orientations for
  periodic orbit problems in symplectic geometry}, Math.~Z.~{\bf 212}
  (1993), no.~1, 13--38.  
\bibitem{FloHof94} A.~Floer and H.~Hofer, {\em Symplectic homology I:
    Open sets in $C^n$}, Math.~Z.~{\bf 215}, no.~1, 37--88 (1994). 
\bibitem{frauenfelder0}U.\,Frauenfelder, \emph{The Arnold-Givental conjecture and moment
Floer homology}, Int.\,Math.\,Res.\,Not. (2004), no.\,42, 2179--2269.
\bibitem{frauenfelder} U.\,Frauenfelder, \emph{Vortices on the cylinder}, Int.\,Math.\,Res.\,Not.
(2006), Art.\,Id 63130, 34\,pp.
\bibitem{frauenfelder-schlenk} U.\,Frauenfelder, F.\,Schlenk, \emph{Hamiltonian dynamics 
on convex symplectic manifolds}, Israel J.\,Math. \textbf{159} (2007), 1--56.
\bibitem{frei-macdonald}A.\,Frei, J.\,Macdonald,
  \emph{Limits in categories of relations and limit-colimit
  commutation}, J.\,Pure Appl.\,Algebra \textbf{1} (1971), no.\,2,
  179--197.
\bibitem{gompf-stipsicz} R.\,Gompf, A.\,Stipsicz, \emph{4-Manifolds
  and Kirby Calculus}, Amer.\,Math.\,Soc. (1999). 
\bibitem{goodwillie} T.\,Goodwillie, \emph{Cyclic homology, derivations and the free loop space},
Topology \textbf{24} (1985), 187--217.
\bibitem{greenlees-may} J.\,Greenlees, J.\,May, \emph{Generalized Tate cohomology}, Mem.\,Amer.\,Math.\,Soc. \textbf{113}, no.\,543, (1995)
\bibitem{hatcher} A.~Hatcher, {\em Algebraic Topology}, Cambridge
  Univ.~Press (2002). 
\bibitem{irie} K.\,Irie, \emph{Hofer-Zehnder capacity of unit disk cotangent bundles and the loop product}, arXiv:1110.2244
\bibitem{jacobson} N.~Jacobson, {\em Basic Algebra I}, 2nd edition, Dover Publ. (2009).
\bibitem{jones-petrack} J.\,Jones, S.\,Petrack, \emph{The fixed point theorem in equivariant cohomology},
Trans.\,Amer.\,Math.\,Soc. \textbf{322} (1990), no.\,1, 35--49.
\bibitem{kragh} T.\,Kragh, \emph{Parametrized Ring-Spectra and the Nearby Lagrangian 
 Conjecture}, Geom.\,Topol. \textbf{17}, no.\,2 (2013), 639--731.
\bibitem{mcduff-salamon} D.~McDuff, D.~Salamon, {\em $J$-holomorphic Curves and Quantum Cohomology}, Amer. Math. Soc. (2004).
\bibitem{mclean} M.~McLean, {\em Symplectic invariance of uniruled
  affine varieties and log Kodaira dimension}, Duke Math. J.~{\bf 163}
  (2014), no.~10, 1929--1964.  
\bibitem{salamon} D.~Salamon, {\em Lectures on Floer homology},
  Symplectic geometry and topology (Park City, 1997), 143--229,
  IAS/Park City Math. Ser.~7, Amer. Math. Soc. (1999). 
\bibitem{salamon-weber} D.\,Salamon, J.\,Weber, \emph{Floer
 homology and the heat flow}, Geom.\,Funct.\,Anal. \textbf{16}
 (2006), 1050--1138.
\bibitem{schwarz} M.\,Schwarz, \emph{Morse homology}, Progress in Mathematics, vol.\,111,
Birkh\"auser, Basel (1993).
\bibitem{Sei08b} P.~Seidel, {\em A biased view of symplectic
    cohomology}, Current developments in mathematics, 2006, 211--253,
    Int.~Press (2008).
\bibitem{swan} R.\,Swan, \emph{A new method in fixed point theory}, Comment.\,Math.\,Helv. \textbf{34}
 (1960), 1--16.
\bibitem{tene} H.\,Tene, \emph{Some geometric equivariant cohomology theories}, arXiv:1210.7923.
\bibitem{Vit99} C.~Viterbo, {\em Functors and computations in Floer
    homology with applications I},  Geom.~Funct. Anal.~{\bf 9}, no.~5,
  985--1033 (1999). 
\bibitem{viterbo}C.\,Viterbo, \emph{Functors and computations in
 Floer homology with applications. II}, Preprint Universit\'e
 Paris-Sud no 98-15 (1998).
\bibitem{wasserman} A.\,Wasserman, \emph{Equivariant differential topology},
Topology \textbf{8} (1969), 127--150.
\bibitem{weber} J.\,Weber, \emph{Morse homology for the heat flow}, Math.\,Z.
\textbf{275}, no.\,1-2 (2013), 1--54.
 \bibitem{weibel}C.\,Weibel, \emph{An introduction to homological
  algebra}, Cambridge Studies in Advanced Mathematics, 38.
  Cambridge University Press, Cambridge, 1994.
\bibitem{zhao}J.\,Zhao, \emph{Periodic symplectic cohomologies}, preprint.
\end{thebibliography}
\end{document}